\newcommand{\inlineitem}[1][]{%
\ifnum\enit@type=\tw@
    {\descriptionlabel{#1}}
  \hspace{\labelsep}%
\else
  \ifnum\enit@type=\z@
       \refstepcounter{\@listctr}\fi
    \quad\@itemlabel\hspace{\labelsep}%
\fi} \makeatother
\newcommand{\gl}{\lambda}
\newcommand{\gm}{\mu}
\newcommand{\gp}{\pi}
\newcommand{\Gl}{\Lambda}
\newcommand{\Gs}{\Sigma}
\newcommand{\Gom}{\Omega}
\newcommand{\subs}{\subset}
\newcommand{\sups}{\supset}
\newcommand{\sbnq}{\subsetneq}
\newcommand{\bs}{\backslash}
\newcommand{\nin}{\notin}
\newcommand{\ti}{\tilde}
\newcommand{\mbb}{\mathbb}
\newcommand{\mcl}{\mathcal}
\newcommand{\ol}{\overline}
\newcommand{\us}{\underset}
\newcommand{\os}{\overset}
\newcommand{\lra}{\longrightarrow}
\newcommand{\I}{\mcl I}
\newcommand{\N}{\mbb N}
\newcommand{\Z}{\mbb Z}
\newcommand{\R}{\mcl R}
\newcommand{\Ra}{\Rightarrow}
\newcommand{\lmt}{\longmapsto}
\newcommand{\eqdef}{\overset{\mathrm{def}}{=\joinrel=}}
\newcommand{\equ}[1]{%
\begin{equation*}
#1
\end{equation*}
}
\newcommand{\equa}[1]{%
\begin{equation*}
\begin{aligned}
#1
\end{aligned}
\end{equation*}
}
\DeclareMathOperator{\Det}{Det}
\DeclareMathOperator{\Diag}{Diag}
\newcommand{\mattwo}[4]{%
\begin{pmatrix}
  #1 & #2\\ #3 & #4
\end{pmatrix}
}
\newcommand{\matcolthree}[3]{%
\begin{pmatrix}
  #1\\#2\\#3
\end{pmatrix}
}
\newcommand{\mattwothree}[6]{%
\begin{pmatrix}
  #1 & #2 & #3\\ #4 & #5 & #6
\end{pmatrix}
}
\newcommand{\matthreetwo}[6]{%
	\begin{pmatrix}
		#1 & #2\\ #3 & #4\\ #5 & #6
	\end{pmatrix}
}
\newcommand{\matthree}[9]{%
\begin{pmatrix}
  #1 & #2 & #3\\ #4 & #5 & #6\\ #7 & #8 & #9
\end{pmatrix}
}
\newcommand{\matfour}[9]{%
  \def\argi{{#1}}%
  \def\argii{{#2}}%
  \def\argiii{{#3}}%
  \def\argiv{{#4}}%
  \def\argv{{#5}}%
  \def\argvi{{#6}}%
  \def\argvii{{#7}}%
  \def\argviii{{#8}}%
  \def\argix{{#9}}%
  \matfourRelay
}
\newcommand\matfourRelay[7]{%
  % arguments 1-9 are now in
  %   \I-\IX
  % arguments 10-16 are in
  %   #1-#7
  % <macro body>%
\begin{pmatrix}
  \argi & \argii & \argiii & \argiv\\
  \argv & \argvi & \argvii & \argviii\\
  \argix & #1 & #2 & #3\\
   #4 & #5 & #6 & #7
\end{pmatrix}
}
\theoremstyle{plain}
\newtheorem{theorem}{Theorem}[section]
\newtheorem{prop}[theorem]{Proposition}
\newtheorem{lemma}[theorem]{Lemma}
\newtheorem{ques}[theorem]{Question}
\newtheorem{note}[theorem]{Note}
\def\namedlabel#1#2{\begingroup
	\def\@currentlabel{#2}%
	\label{#1}\endgroup
}
\newtheorem*{thmA}{\bf{Theorem A}}
\newtheorem*{thmOmega}{\bf{Theorem} $\bm{\Gom}$}
\newtheorem*{thmSigma}{\bf{Theorem} $\bm{\Gs}$}
\newtheorem*{thmLambda}{\bf{Theorem} $\bm{\Gl}$}
\theoremstyle{definition}
\newtheorem{defn}[theorem]{Definition}
\theoremstyle{remark}
\newtheorem{remark}[theorem]{Remark}
\newtheorem{example}[theorem]{Example}
\numberwithin{equation}{section}
\begin{document}
\title[On the Surjectivity of Certain Maps III: The Unital Set Condition]{On the Surjectivity of Certain Maps III: The Unital Set Condition}
\author[C.P. Anil Kumar]{C.P. Anil Kumar}
\address{Room. No. 223, Middle Floor, Main Building, Harish-Chandra Research Institute, Chhatnag Road, Jhunsi, Prayagraj (Allahabad)-211019, Uttar Pradesh, INDIA}
\email{akcp1728@gmail.com}
\subjclass[2010]{Primary 13F05,13A15 Secondary 11D79,11B25,16U60}
\keywords{commutative rings with unity, generalized projective spaces associated to ideals}
\thanks{This work is done while the author is a Post Doctoral Fellow at Harish-Chandra Research Institute, Prayagraj(Allahabad), INDIA.}
\begin{abstract}
In this article, for generalized projective spaces with any weights, we prove four main theorems in three different contexts where the Unital Set Condition USC (Definition~\ref{defn:UnitalSetCond}) on ideals is further examined. In the first context we prove, in the first main Theorem~\ref{theorem:GenCRTSURJ}, the surjectivity of the Chinese remainder reduction map associated to the generalized projective space of an
ideal $\mcl{I}=\us{i=1}{\os{k}{\prod}}\mcl{I}_k$ with a given factorization into mutually co-maximal ideals $\mcl{I}_j,1\leq j\leq k$ where $\mcl{I}$ satisfies the USC, using the key concept of choice multiplier hypothesis (Definition~\ref{defn:CMH}) which is satisfied.
In the second context, for a positive $k$, we prove in the second main
Theorem~\ref{theorem:SurjModIdealSP}, the surjectivity of the reduction map $SP_{2k}(\R)\lra SP_{2k}(\frac{\R}{\I})$ of strong approximation type for a ring $\R$ quotiented by an ideal $\I$ which satisfies the USC.   
In the third context, for a positive integer $k$, we prove in the thrid main Theorem~\ref{theorem:FullGenSurj}, the surjectivity of the map from special linear group of degree $(k+1)$ to the product of generalized projective spaces of $(k+1)\operatorname{-}$mutually co-maximal
ideals $\mcl{I}_j,0\leq j\leq k$ associating the $(k+1)\operatorname{-}$rows or $(k+1)\operatorname{-}$columns, where the ideal $\mcl{I}=\us{j=0}{\os{k}{\prod}}\mcl{I}_j$ satisfies the USC. In the fourth main Theorem~\ref{theorem:FullGenSurjOne}, for a positive integer $k$, we prove the surjectivity of the map from the symplectic group of degree $2k$ to the product of generalized projective spaces of $(2k)\operatorname{-}$mutually co-maximal ideals $\mcl{I}_j,1\leq j\leq 2k$ associating the $(2k)\operatorname{-}$rows or $(2k)\operatorname{-}$columns where the ideal $\mcl{I}=\us{j=1}{\os{2k}{\prod}}\mcl{I}_j$ satisfies the USC. The answers to Questions~[\ref{ques:GenCRTSURJ},~\ref{ques:ProjHighDim},~\ref{ques:ProjHighDimOne}] 
in a greater generality are not known.
\end{abstract}
\maketitle
%%%%%%%%%%%%%%%%%%%%%%%%%%%%%%%%%%%%%%%%%%%%%%%%%%%%%%%%%%%%%%%%%%%%%%%%%%%%%%%%%%%%%%%%%
%%%%%%%%%%%%%%%%%%%%%%%%%%%%%%%%%%%%%%%%%%%%%%%%%%%%%%%%%%%%%%%%%%%%%%%%%%%%%%%%%%%%%%%%%
%%%%%%%%%%%%%%%%%%%%%%%%%%%%%%%%%%%%%%%%%%%%%%%%%%%%%%%%%%%%%%%%%%%%%%%%%%%%%%%%%%%%%%%%%

\section{\bf{Introduction}}

Generalized projective spaces are introduced in C.P. Anil Kumar~\cite{CPAKI} and surjectivity of certain maps are proved for special linear groups in C.P. Anil Kumar~\cite{CPAKI}. The surjectivity theorem of 
strong approximation type for special linear groups is proved in C.P. Anil Kumar~\cite{MR3887364}. Here in this article for a commutative ring $\R$ with unity, we prove surjectivity theorems for the infinite family of 
symplectic groups $SP_{2k}(\R),k\geq 1$ just similar to the infinite family special linear groups $SL_k(\R),k\geq 2$ in C.P.~Anil Kumar~\cite{MR3887364},~\cite{CPAKI}. We find and conclude that these two infinite 
families of groups behave similarly with respect to the surjectivity of certain maps, that is,
\begin{itemize}
	\item both the families give surjectivity of strong approximation type when the ring $\R$ is quotiented by an ideal $\I$ which satisfies the USC and
	\item both the families give surjective map onto the product of generalized projective spaces associated to mutually co-maximal ideals, when similar conditions are imposed on the ideals.
\end{itemize}
A survey of results on strong approximation can be found in~\cite{SA}. 
%%%%%%%%%%%%%%%%%%%%%%%%%%%%%%%%%%%%%%%%%%%%%%%%%%%%%%%%%%%%%%%%%%%%%%%%%%%%%%%%%%%%%%%%%
%%%%%%%%%%%%%%%%%%%%%%%%%%%%%%%%%%%%%%%%%%%%%%%%%%%%%%%%%%%%%%%%%%%%%%%%%%%%%%%%%%%%%%%%%
%%%%%%%%%%%%%%%%%%%%%%%%%%%%%%%%%%%%%%%%%%%%%%%%%%%%%%%%%%%%%%%%%%%%%%%%%%%%%%%%%%%%%%%%%
Another motivation to write this article is to answer the following two questions on surjectivity which are posed in C.~P.~Anil Kumar~\cite{CPAKI} in a considerable generality.

\begin{ques}
	\label{ques:GenCRTSURJ}
	Let $\R$ be a commutative ring with unity and $k,l\in \mbb{N}$. Let $\mcl{I}_i,1\leq i\leq k$ be mutually co-maximal ideals and
	$\mcl{I}=\us{i=1}{\os{k}{\prod}}\mcl{I}_i$. Let $m_j\in \mbb{N},0\leq j\leq l$. When is the following Chinese remainder reduction map associated to the generalized projective spaces 
	\equ{\mbb{PF}^{l,(m_0,m_1,\ldots,m_l)}_{\mcl{I}} \lra \mbb{PF}^{l,(m_0,m_1,\ldots,m_l)}_{\mcl{I}_1} \times \mbb{PF}^{l,(m_0,m_1,\ldots,m_l)}_{\mcl{I}_2} \times \ldots \times \mbb{PF}^{l,(m_0,m_1,\ldots,m_l)}_{\mcl{I}_k}}
	surjective (and even bijective)? Or under what further general conditions
	\begin{itemize}
		\item on the ring $\R$,
		\item on the values $m_j,0\leq j\leq l$,
		\item on the co-maximal ideals $\mcl{I}_1,\ldots,\mcl{I}_k$,
	\end{itemize}
	is this map surjective?
\end{ques}

\begin{ques}
	\label{ques:ProjHighDim}
	Let $\R$ be a commutative ring with unity. Let $k \in \mbb{N}$ and $\mcl{I}_0,\mcl{I}_1,\ldots,\mcl{I}_k$ be mutually co-maximal ideals in $\R$. Let $m_j^i\in \mbb{N}, 0\leq i,j\leq k$. 
	When is the map 
	\equ{SL_{k+1}(\R) \lra \us{i=0}{\os{k}{\prod}}\mbb{PF}^{k,(m^i_0,m^i_1,\ldots,m^i_k)}_{\I_i}} given by
	\equa{&A_{(k+1)\times (k+1)}=[a_{i,j}]_{0\leq i,j\leq k} \lra\\
		&\big([a_{0,0}:a_{0,1}:\ldots: a_{0,k}],[a_{1,0}:a_{1,1}:\ldots: a_{1,k}],\ldots,[a_{k,0}:a_{k,1}:\ldots: a_{k,k}]\big)}
	surjective? Or under what further general conditions
	\begin{itemize}
		\item on the ring $\R$,
		\item on the values $m_j^i,0\leq i,j\leq k$,
		\item on the co-maximal ideals $\mcl{I}_0,\mcl{I}_1,\ldots,\mcl{I}_k$,
	\end{itemize}
	is this map surjective?
\end{ques}
Also in this article we answer the following analogous question for symplectic groups apart from proving Theorem~\ref{theorem:SurjModIdealSP} of strong approximation type for them.
\begin{ques}
		\label{ques:ProjHighDimOne}
	Let $\R$ be a commutative ring with unity. Let $k \in \mbb{N}$ and $\mcl{I}_i,1\leq i\leq 2k$ be mutually co-maximal ideals in $\R$. Let $m_j^i\in \mbb{N}, 1\leq i,j\leq 2k$. 
	Is the map 
	\equ{SP_{2k}(\R) \lra \us{i=1}{\os{2k}{\prod}}\mbb{PF}^{2k-1,(m^i_1,m^i_2,\ldots,m^i_{2k})}_{\I_i}} given by
	\equa{&A_{2k\times 2k}=[a_{i,j}]_{1\leq i,j\leq k} \lra\\
		&\big([a_{1,1}:a_{1,2}:\ldots: a_{1,2k}],[a_{2,1}:a_{2,2}:\ldots: a_{2,2k}],\ldots,[a_{2k,1}:a_{2k,2}:\ldots: a_{2k,2k}]\big)}
	surjective? Or under what further general conditions
	\begin{itemize}
		\item on the ring $\R$,
		\item on the values $m_j^i,1\leq i,j\leq k$,
		\item on the co-maximal ideals $\mcl{I}_1,\mcl{I}_2,\ldots,\mcl{I}_{2k}$,
	\end{itemize}
	is this map surjective?
\end{ques}

We answer these three questions~[\ref{ques:GenCRTSURJ},~\ref{ques:ProjHighDim},~\ref{ques:ProjHighDimOne}]  in the affirmative, for any given values $m_j\in\N,0\leq j\leq l, m^i_j\in \N,0\leq i,j\leq k, m^i_j\in\N,1\leq i,j\leq 2k$ of the projective spaces respectively in these three questions with no further conditions on the ring $\R$, when the product ideals $\us{i=1}{\os{k}{\prod}}\mcl{I}_i,\us{i=0}{\os{k}{\prod}}\mcl{I}_i,\us{i=1}{\os{2k}{\prod}}\mcl{I}_i$ respectively in these three questions satisfy the USC. 

%%%%%%%%%%%%%%%%%%%%%%%%%%%%%%%%%%%%%%%%%%%%%%%%%%%%%%%%%%%%%%%%%%%%%%%%%%%%%%%%%%%%%%%%%
%%%%%%%%%%%%%%%%%%%%%%%%%%%%%%%%%%%%%%%%%%%%%%%%%%%%%%%%%%%%%%%%%%%%%%%%%%%%%%%%%%%%%%%%%
%%%%%%%%%%%%%%%%%%%%%%%%%%%%%%%%%%%%%%%%%%%%%%%%%%%%%%%%%%%%%%%%%%%%%%%%%%%%%%%%%%%%%%%%%

\section{\bf{The main results}}
In this section we state the main results. Before stating the main results we need to introduce the definition of generalized projective spaces and one more standard definition.

%%%%%%%%%%%%%%%%%%%%%%%%%%%%%%%%%%%%%%%%%%%%%%%%%%%%%%%%%%%%%%%%%%%%%%%%%%%%%%%%%%%%%%%%%
%%%%%%%%%%%%%%%%%%%%%%%%%%%%%%%%%%%%%%%%%%%%%%%%%%%%%%%%%%%%%%%%%%%%%%%%%%%%%%%%%%%%%%%%%
%%%%%%%%%%%%%%%%%%%%%%%%%%%%%%%%%%%%%%%%%%%%%%%%%%%%%%%%%%%%%%%%%%%%%%%%%%%%%%%%%%%%%%%%%

\begin{defn}[Definition of a Projective Space Relation]
\label{defn:ProjSpaceRelation}
~\\
Let $\R$ be a commutative ring with unity. Let $k\in \mbb{N}$ and \equ{\mcl{GCD}_{k+1}(\R)=\{(a_0,a_1,a_2,\ldots,a_k)\in \R^{k+1}\mid \us{i=0}{\os{k}{\sum}}\langle a_i\rangle=\R\}.} Let $\I \subsetneq \R$ be an ideal
and $m_0,m_1,\ldots,m_k\in \mbb{N}$. Define an equivalence relation \equ{\sim^{k,(m_0,m_1,\ldots,m_k)}_{\I}} on $\mcl{GCD}_{k+1}(\R)$
as follows. For \equ{(a_0,a_1,a_2,\ldots,a_k),(b_0,b_1,b_2,\ldots,b_k) \in \mcl{GCD}_{k+1}(\R)} we say \equ{(a_0,a_1,a_2,\ldots,a_k)\sim^{\{k,(m_0,m_1,\ldots,m_k)\}}_{\I}(b_0,b_1,b_2,\ldots,b_k)} if there exists a \equ{\gl\in \R\text{ with }\ol{\gl}\in \bigg(\frac{\R}{\I}\bigg)^{*}} such that we have 
\equ{a_i \equiv \gl^{m_i}b_i \mod \I,0\leq i\leq k.}  
\end{defn}
 
We mention an example now.

\begin{example}
We take in this example $\R=\Z,\mcl{I}=\langle p\rangle $ where $p$ is a prime, $k=1,m_0=1,m_1=2$. Consider the equivalence relation $\sim^{1,(1,2)}_{\langle p\rangle}$. Here the equivalence classes are given as follows.
	\begin{itemize}
		\item One equivalence class: $E_0=\{(a,pb)\mid a,b\in \Z,p\nmid a,gcd(a,pb)=1 \}$ which modulo $p$ has size $(p-1)$.
		\item $(p-1)$ equivalence classes: For $1 \leq b\leq p-1$ we have $E_b=\{(\gl+pa,\gl^2b+pc)\mid a,b,c,\gl \in \Z,p\nmid \gl,gcd(\gl+pa,\gl^2b+pc)=1\}$ each of which modulo $p$ has size $p-1$.
		\item One equivalence class: $R_1=\{(pa,\gl^2+pb)\mid a,b,\gl\in \Z, p\nmid \gl,gcd(pa,\gl^2+pb)=1\}$ which modulo $p$ has size $\frac{p-1}{2}$.
		\item One equivalence class: Let $n\in \Z$ be a fixed quadratic non-residue modulo $p$. Then we have $O_n=\{(pa,\gl^2n+pb)\mid a,b,\gl\in \Z, p\nmid \gl, gcd(pa,\gl^2n+pb)=1\}$ which modulo $p$ has size $\frac{p-1}{2}$.
	\end{itemize}
	The total number of equivalence classes is $p+2$ which is one more than the number of elements in the usual projective space $\mbb{PF}^1_p$.
	A set of representatives for the equivalence classes  is given by
	\begin{itemize}
		\item $(1,b)\in E_b, 0\leq b\leq p-1$.
		\item $(0,1)\in R_1$
		\item $(0,n)\in O_n$ where $n$ is a chosen quadratic non-residue modulo $p$.
	\end{itemize}
\end{example}

\begin{example}
We take in this example $\R=\mbb{R},\mcl{I}=\langle 0\rangle,k\in \N,m_i=2,0\leq i\leq k$.
In this case the equivalence relation $\sim^{k,(2,2,\ldots,2)}_{\langle 0\rangle}$ is such that the equivalence classes are rays emanating from origin in $\mbb{R}^{k+1}$.	
\end{example}

\begin{example}
We take in this example $\R=\mbb{F}_p,p$ a prime, $\mcl{I}=\langle 0\rangle,k\in \N,m_i=2,0\leq i\leq k$. In this case the equivalence relation $\sim^{k,(2,2,\ldots,2)}_{\langle 0\rangle}$ is such that the equivalence classes are rays emanating from origin in $\mbb{F}_p^{k+1}$ where a ray is defined as follows. For $v=(a_0,a_1,\ldots,a_k)\in \mbb{F}_p^{k+1}\bs \{0\}$ the ray is given by $R_{v}=\{\gl^2(a_0,a_1,\ldots,a_k)\mid \gl\in \mbb{F}_p^{*}\}$. There are more equivalence classes than just the number of lines passing through origin.	
\end{example}
%%%%%%%%%%%%%%%%%%%%%%%%%%%%%%%%%%%%%%%%%%%%%%%%%%%%%%%%%%%%%%%%%%%%%%%%%%%%%%%%%%%%%%%%%
%%%%%%%%%%%%%%%%%%%%%%%%%%%%%%%%%%%%%%%%%%%%%%%%%%%%%%%%%%%%%%%%%%%%%%%%%%%%%%%%%%%%%%%%%
%%%%%%%%%%%%%%%%%%%%%%%%%%%%%%%%%%%%%%%%%%%%%%%%%%%%%%%%%%%%%%%%%%%%%%%%%%%%%%%%%%%%%%%%%

We define the generalized projective space associated to an ideal in a commutative ring with unity.
\begin{defn}
\label{defn:GenProjSpace}
Let $\R$ be a commutative ring with unity. Let $k\in \mbb{N}$ and \equ{\mcl{GCD}_{k+1}(\R)=\{(a_0,a_1,a_2,\ldots,a_k)\in \R^{k+1}\mid \us{i=0}{\os{k}{\sum}}\langle a_i\rangle=\R\}.}
Let $m_0,m_1,\ldots,m_k\in \mbb{N}$ and $\I \subsetneq \R$ be an ideal. Let $\sim^{k,(m_0,m_1,\ldots,m_k)}_{\I}$ denote the equivalence relation as in Definition~\ref{defn:ProjSpaceRelation}. Then we define 
\equ{\mbb{PF}^{k,(m_0,m_1,\ldots,m_k)}_{\I}\eqdef \frac{\mcl{GCD}_{k+1}(\R)}{\sim^{k,(m_0,m_1,\ldots,m_k)}_{\I}}.}
If $\mcl{I}=\R$ then let $\sim^{k,(m_0,m_1,\ldots,m_k)}_{\I}$ be the trivial equivalence relation on $\mcl{GCD}_{k+1}(\R)$ where any two elements are related.
We define \equ{\mbb{PF}^{k,(m_0,m_1,\ldots,m_k)}_{\I} \eqdef \frac{\mcl{GCD}_{k+1}(\R)}{\sim^{k,(m_0,m_1,\ldots,m_k)}_{\I}}}
a singleton set having single equivalence class.
\end{defn}

\begin{remark}
Let $\mbb{K}$ be a field. Then by choosing $\R=\mbb{K},\mcl{I}=\langle 0\rangle,k\in \N,m_i=1,0\leq i\leq k$ we get the projective space $\mbb{PF}^{k,(1,1,\ldots,1)}_{\langle 0\rangle}$ which is the usual $k$-dimensional projective space $\mbb{PF}^k_{\mbb{K}}$ over the field $\mbb{K}$.
\end{remark}

%%%%%%%%%%%%%%%%%%%%%%%%%%%%%%%%%%%%%%%%%%%%%%%%%%%%%%%%%%%%%%%%%%%%%%%%%%%%%%%%%%%%%%%%%
%%%%%%%%%%%%%%%%%%%%%%%%%%%%%%%%%%%%%%%%%%%%%%%%%%%%%%%%%%%%%%%%%%%%%%%%%%%%%%%%%%%%%%%%%
%%%%%%%%%%%%%%%%%%%%%%%%%%%%%%%%%%%%%%%%%%%%%%%%%%%%%%%%%%%%%%%%%%%%%%%%%%%%%%%%%%%%%%%%%
We define here, when a finite subset of a commutative ring $\R$ is a unital set.
\begin{defn}
	\label{defn:unitalset}
	Let $\R$ be a commutative ring with unity. Let $k\in \mbb{N}$. We say a finite subset 
	\equ{\{a_1,a_2,\ldots,a_k\}\subs \R} consisting of $k\operatorname{-}$elements (possibly with repetition) 
	is unital or a unital set if the ideal generated by the elements of the set is a unit ideal.
\end{defn}

%%%%%%%%%%%%%%%%%%%%%%%%%%%%%%%%%%%%%%%%%%%%%%%%%%%%%%%%%%%%%%%%%%%%%%%%%%%%%%%%%%%%%%%%%
%%%%%%%%%%%%%%%%%%%%%%%%%%%%%%%%%%%%%%%%%%%%%%%%%%%%%%%%%%%%%%%%%%%%%%%%%%%%%%%%%%%%%%%%%
%%%%%%%%%%%%%%%%%%%%%%%%%%%%%%%%%%%%%%%%%%%%%%%%%%%%%%%%%%%%%%%%%%%%%%%%%%%%%%%%%%%%%%%%%

Based on the previous definition, we make a relevant definition, the USC for an ideal.
\begin{defn}[Unital set condition for an ideal]
	\label{defn:UnitalSetCond} Let $\R$ be a commutative ring with unity. Let $k\in \mbb{N}$ and $\mcl{I} \sbnq \mcl{\R}$ be an ideal. 
	We say $\mcl{I}$ satisfies unital set condition $USC$ if for every unital set
	$\{a_1,a_2,\ldots,a_k\} \subs \R$ with $k \geq 2$, there exists an
	element $b \in \langle a_2,\ldots,a_k\rangle$ such that $a_1+b$ is a unit modulo
	$\mcl{I}$.
\end{defn}
\begin{example}
	In the ring $\Z$ any ideal $0\neq \mcl{I}\subsetneq \Z$ satisfies the USC using Lemma~\ref{lemma:FundLemma}. More generally, in a commutative ring $\R$ with unity, any ideal $\mcl{I}\sbnq \R$ which is contained in only a finitely many maximal ideals of $\R$, satisfies the USC using Proposition~\ref{prop:Unital}. So any non-zero ideal $\I$ in a Dedekind domain $\R$ satisfies the USC. Also see~\ref{Example:USC} for some exotic variety of examples..
\end{example}
%%%%%%%%%%%%%%%%%%%%%%%%%%%%%%%%%%%%%%%%%%%%%%%%%%%%%%%%%%%%%%%%%%%%%%%%%%%%%%%%%%%%%%%%%
%%%%%%%%%%%%%%%%%%%%%%%%%%%%%%%%%%%%%%%%%%%%%%%%%%%%%%%%%%%%%%%%%%%%%%%%%%%%%%%%%%%%%%%%%
%%%%%%%%%%%%%%%%%%%%%%%%%%%%%%%%%%%%%%%%%%%%%%%%%%%%%%%%%%%%%%%%%%%%%%%%%%%%%%%%%%%%%%%%%

\subsection{\bf{The first main theorem}}
The first main theorem is stated as follows:

%%%%%%%%%%%%%%%%%%%%%%%%%%%%%%%%%%%%%%%%%%%%%%%%%%%%%%%%%%%%%%%%%%%%%%%%%%%%%%%%%%%%%%%%%
%%%%%%%%%%%%%%%%%%%%%%%%%%%%%%%%%%%%%%%%%%%%%%%%%%%%%%%%%%%%%%%%%%%%%%%%%%%%%%%%%%%%%%%%%
%%%%%%%%%%%%%%%%%%%%%%%%%%%%%%%%%%%%%%%%%%%%%%%%%%%%%%%%%%%%%%%%%%%%%%%%%%%%%%%%%%%%%%%%%

\begin{thmA}
\namedlabel{theorem:GenCRTSURJ}{A}
Let $\R$ be a commutative ring with unity and $k,l\in \mbb{N}$. Let $\mcl{I}_i,1\leq i\leq k$ be mutually co-maximal ideals 
and either the ideal $\mcl{I}=\us{i=1}{\os{k}{\prod}}\mcl{I}_i$ satisfies the USC or $\mcl{I}=\R$. Let $m_j\in \mbb{N},0\leq j\leq l$. Then the Chinese remainder reduction map associated to the projective space 
\equ{\mbb{PF}^{l,(m_0,m_1,\ldots,m_l)}_{\mcl{I}} \lra \mbb{PF}^{l,(m_0,m_1,\ldots,m_l)}_{\mcl{I}_1} \times \mbb{PF}^{l,(m_0,m_1,\ldots,m_l)}_{\mcl{I}_2} \times \ldots \times \mbb{PF}^{l,(m_0,m_1,\ldots,m_l)}_{\mcl{I}_k}}
is surjective (in fact bijective).
\end{thmA}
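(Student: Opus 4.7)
The plan is to dispose of the case $\mcl{I}=\R$ trivially, handle injectivity by lifting unit scalars through CRT, and reduce surjectivity to a choice-of-multiplier statement for which the USC is the enabling hypothesis. The case $\mcl{I}=\R$ is immediate: co-maximality forces each $\mcl{I}_i=\R$, so every generalized projective space in sight is the one-point set of Definition~\ref{defn:GenProjSpace} and the map is vacuously bijective. I therefore assume $\mcl{I}\sbnq\R$ satisfies the USC. Pairwise co-maximality of the $\mcl{I}_i$ gives $\mcl{I}=\prod_i\mcl{I}_i=\bigcap_i\mcl{I}_i$ together with the CRT isomorphism $\R/\mcl{I}\cong\prod_i\R/\mcl{I}_i$, which identifies units on both sides.

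For injectivity, I take $(a_0,\ldots,a_l),(b_0,\ldots,b_l)\in\mcl{GCD}_{l+1}(\R)$ with the same image, extract for each $i$ a unit scalar $\lambda_i\in\R$, $\ol{\lambda_i}\in(\R/\mcl{I}_i)^{*}$, with $a_j\equiv\lambda_i^{m_j}b_j\bmod\mcl{I}_i$, and lift $(\lambda_i)_i$ through CRT to a single $\lambda\in\R$; then $\ol{\lambda}\in(\R/\mcl{I})^{*}$ automatically, and the congruences $a_j\equiv\lambda^{m_j}b_j\bmod\mcl{I}_i$ pass to a congruence modulo $\mcl{I}=\bigcap_i\mcl{I}_i$, giving the required equivalence.

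For surjectivity, my plan is to reduce the problem to the following choice-of-multiplier statement, which I expect is the precise content of the CMH of Definition~\ref{defn:CMH}: given $\mcl{GCD}_{l+1}(\R)$-representatives $(a_0^{(i)},\ldots,a_l^{(i)})$ of the prescribed classes, one can select $\lambda_i\in\R$ with $\ol{\lambda_i}\in(\R/\mcl{I}_i)^{*}$ so that some CRT-lift $(A_0,\ldots,A_l)$ of the scaled family $\bigl(\lambda_i^{m_0}a_0^{(i)},\ldots,\lambda_i^{m_l}a_l^{(i)}\bigr)_{i}$ lies in $\mcl{GCD}_{l+1}(\R)$. Such a lift maps to the prescribed class in each $\mbb{PF}^{l,(m_0,\ldots,m_l)}_{\mcl{I}_i}$ by construction, so CMH yields surjectivity. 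To obtain the $\lambda_i$'s I would begin with the naive lift (all $\lambda_i=1$), note that unimodularity of each fibre representative in $\R$ together with co-maximality of the $\mcl{I}_i$ forces $\langle A_0,\ldots,A_l\rangle+\mcl{I}=\R$, and fix $v\in\mcl{I}$ with $1-v\in\langle A_0,\ldots,A_l\rangle$ so that $\{A_0,\ldots,A_l,v\}$ is a unital set in $\R$. USC then supplies, for each coordinate in turn, an element $b$ in the ideal generated by the remaining coordinates together with $v$ such that $A_j+b$ is a unit modulo $\mcl{I}$; the $v$-part of $b$ lies in $\mcl{I}$ and so does not perturb residues modulo any $\mcl{I}_i$, while the $A$-part can be repackaged as a single unit scaling $\lambda_i$ on each $\R/\mcl{I}_i$ factor, which is the only freedom $\sim^{l,(m_0,\ldots,m_l)}_{\mcl{I}_i}$ permits.

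The hard part is carrying out this USC adjustment coherently across all $l+1$ coordinates and all weights $m_j$. A single application of USC only guarantees that one slot can be made a unit modulo $\mcl{I}$ at the cost of adding an element of the ideal of the other slots; to arrive at a genuine element of $\mcl{GCD}_{l+1}(\R)$ and simultaneously re-express every adjustment as the scaling by $\lambda_i^{m_j}$ that $\sim^{l,(m_0,\ldots,m_l)}_{\mcl{I}_i}$ allows, the USC must be chained through all coordinates in a way that respects the weight vector $(m_0,\ldots,m_l)$. Verifying that this chaining can always be arranged under USC, which I expect to be the exact content of Definition~\ref{defn:CMH}, is the main technical obstacle; once established, the CMH together with the injectivity argument delivers bijectivity of the Chinese remainder reduction.
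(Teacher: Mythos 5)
Your treatment of the $\mcl{I}=\R$ case and of injectivity is correct and essentially matches the paper. The gap is in surjectivity, and although you have correctly flagged where the difficulty lies, the two ideas you lean on to close it do not work. First, the ``repackaging as a unit scaling'' fails: if the $A$-part $b'\in\langle A_0,\ldots,\widehat{A_j},\ldots,A_l\rangle$ of your adjustment is nontrivial, then $A_j+b'$ is generically not of the form $\lambda_i^{m_j}A_j$ modulo $\mcl{I}_i$ for any unit $\lambda_i$, and even if it were for one $j$, the \emph{same} $\lambda_i$ would have to simultaneously rescale every other coordinate by $\lambda_i^{m_{j'}}$, which nothing forces. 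The only perturbations of a lift that leave the class in each $\mbb{PF}^{l,(m_0,\ldots,m_l)}_{\mcl{I}_i}$ untouched without choosing a scalar are perturbations by elements of $\mcl{I}=\bigcap_i\mcl{I}_i$. Second, the CMH (Definition~\ref{defn:CMH}) you hope to deduce from the USC is \emph{not} a consequence of it: the paper proves CMH only when one coordinate of the tuple is already a unit modulo $\mcl{I}$ (Lemma~\ref{lemma:CMH}) or when $\R$ is a Dedekind domain (Theorem~\ref{theorem:CMH}). Your naive CRT lift $(A_0,\ldots,A_l)$ has neither property, so CMH cannot be invoked directly.

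The missing idea, and the engine of the paper's proof, is a change of coordinates by $SL_{l+1}(\R)$. The paper inducts on $k$ so that only two co-maximal factors $\mcl{I}_1$ and $\mcl{I}_2\cdots\mcl{I}_k$ are in play, forms the $2\times(l+1)$ matrix of row-representatives, and right-multiplies by matrices in $SL_{l+1}(\R)$ (elementary column operations, which preserve $\mcl{GCD}_{l+1}(\R)$ and are invertible). Using Lemma~\ref{lemma:UnitalSuperIdeal} to transfer USC from $\mcl{I}$ to the superideals $\mcl{I}_1$ and $\mcl{I}_2\cdots\mcl{I}_k$, one arranges the first entry of the first row to be a unit modulo $\mcl{I}_1$ (and the other entries to lie in $\mcl{I}_1$) and the first entry of the second row to be a unit modulo $\mcl{I}_2\cdots\mcl{I}_k$, so that the CRT lift $x_0$ of the first column is a unit modulo $\mcl{I}$. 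Now CMH holds automatically by Lemma~\ref{lemma:CMH}, and Lemma~\ref{lemma:CMHimpliesUnitalVect} produces $t_0,\ldots,t_l\in\mcl{I}$ with $(x_0+t_0,\ldots,x_l+t_l)\in\mcl{GCD}_{l+1}(\R)$ without disturbing any residue. Applying the inverse of the $SL_{l+1}(\R)$ matrix transports this lift back to the original coordinates, still inside $\mcl{GCD}_{l+1}(\R)$ and still with the correct residues modulo $\mcl{I}_1$ and $\mcl{I}_2\cdots\mcl{I}_k$. Without some version of this coordinate change your plan cannot be completed, because the adjustment you need must come entirely from $\mcl{I}$, and USC applied to the naive lift does not deliver that by itself.
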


%%%%%%%%%%%%%%%%%%%%%%%%%%%%%%%%%%%%%%%%%%%%%%%%%%%%%%%%%%%%%%%%%%%%%%%%%%%%%%%%%%%%%%%%%
%%%%%%%%%%%%%%%%%%%%%%%%%%%%%%%%%%%%%%%%%%%%%%%%%%%%%%%%%%%%%%%%%%%%%%%%%%%%%%%%%%%%%%%%%
%%%%%%%%%%%%%%%%%%%%%%%%%%%%%%%%%%%%%%%%%%%%%%%%%%%%%%%%%%%%%%%%%%%%%%%%%%%%%%%%%%%%%%%%%
\begin{thmLambda}
	\namedlabel{theorem:SurjModIdealSP}{$\Gl$}
	Let $\R$ be a commutative ring with
	unity. Let $k\in \mbb{N}$. Let \equ{SP_{2k}(\R)=\bigg\{A\in M_{k\times k}(\R) \mid A^tJA=J\text{ where }J=\mattwo {0_{k\times k}}{I_{k\times k}}{-I_{k\times k}}{0_{k\times k}}\bigg\}.}
	Let $\I \sbnq \R$ be an ideal which satisfies the USC. Then the reduction map
	\equ{SP_{2k}(\R) \lra SP_{2k}\bigg(\frac{\R}{\I}\bigg)} is surjective.
\end{thmLambda}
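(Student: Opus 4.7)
The plan is to proceed by induction on $k$, mirroring the strategy used for the analogous strong-approximation theorem for $SL_n$ in~\cite{MR3887364}. The base case $k=1$ reduces to $SP_2(\R) = SL_2(\R)$, so the surjectivity follows from the cited $SL_n$ result specialized to $n=2$.

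For the inductive step, assuming surjectivity of $SP_{2(k-1)}(\R) \lra SP_{2(k-1)}(\R/\I)$, take $\bar{A} \in SP_{2k}(\R/\I)$ with columns $\bar{v}_1, \ldots, \bar{v}_{2k}$ and focus first on the hyperbolic pair $(\bar{v}_1, \bar{v}_{k+1})$ for the symplectic form $\omega(x,y) = x^t J y$: both $\bar{v}_1$ and $\bar{v}_{k+1}$ are unimodular in $(\R/\I)^{2k}$ and $\omega(\bar{v}_1,\bar{v}_{k+1}) = 1$. First I would use the USC-driven unimodular lifting lemma from~\cite{MR3887364} to lift $\bar{v}_1$ to a unimodular $v_1 \in \R^{2k}$. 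Next, for any naive lift $v_{k+1}^{(0)}$ of $\bar{v}_{k+1}$ one has $\omega(v_1, v_{k+1}^{(0)}) = 1 + \iota$ with $\iota \in \I$; since unimodularity of $v_1$ forces the $\R$-linear functional $\omega(v_1, \cdot)$ to carry $\I^{2k}$ onto $\I$, I can pick $w \in \I^{2k}$ with $\omega(v_1, w) = \iota$ and set $v_{k+1} := v_{k+1}^{(0)} - w$, obtaining a lift with $\omega(v_1, v_{k+1}) = 1$ on the nose.

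Next I would extend the hyperbolic pair $(v_1, v_{k+1})$ to a full symplectic basis of $\R^{2k}$, producing $B \in SP_{2k}(\R)$ whose first and $(k+1)$-th columns are $v_1$ and $v_{k+1}$. Consider $\bar{A}' := \bar{B}^{-1}\bar{A} \in SP_{2k}(\R/\I)$; by construction its first column is $e_1$ and its $(k+1)$-th column is $e_{k+1}$, and the symplectic relations $\omega(e_1, \bar{v}'_j) = 0 = \omega(e_{k+1}, \bar{v}'_j)$ for $j \notin \{1, k+1\}$ force rows $1$ and $k+1$ of $\bar{A}'$ to equal $e_1^t$ and $e_{k+1}^t$. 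Deleting these two rows and columns leaves an element $\bar{A}'' \in SP_{2(k-1)}(\R/\I)$; by the inductive hypothesis it lifts to $A'' \in SP_{2(k-1)}(\R)$. Re-embedding $A''$ into $SP_{2k}(\R)$ with $1$'s at positions $(1,1)$ and $(k+1, k+1)$ (and zeros elsewhere in rows and columns $1, k+1$) yields $A' \in SP_{2k}(\R)$ reducing to $\bar{A}'$. Finally $A := BA' \in SP_{2k}(\R)$ reduces to $\bar{A}$, closing the induction.

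The main obstacle is the construction of $B$: extending a hyperbolic pair in $\R^{2k}$ to a full symplectic basis. Equivalently, the symplectic complement $W = (\R v_1 \oplus \R v_{k+1})^\perp$ should be a free $\R$-module of rank $2(k-1)$ carrying a symplectic basis, which is subtle over an arbitrary commutative ring since stably free modules need not be free. The proof will require a dedicated lemma — the symplectic analog of the USC-based lemma in~\cite{MR3887364} that extends a unimodular vector to a basis of $\R^n$. A cleaner alternative route exploits the fact that the USC on $\I$ forces the quotient $\R/\I$ to have stable rank $1$ (lift the generators of $\I$ together with $\bar{a}, \bar{b}$ to a unital set and apply USC), so that classical stability theorems imply $SP_{2k}(\R/\I)$ is generated by elementary symplectic transvections, each of which lifts trivially to $SP_{2k}(\R)$.
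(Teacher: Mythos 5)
Your plan diverges substantially from the paper's proof, and while the outline is coherent, it rests on a step that you yourself identify as the main obstacle and that is in fact a genuine gap: extending a hyperbolic pair $(v_1,v_{k+1})$ with $\omega(v_1,v_{k+1})=1$ to a full symplectic basis of $\R^{2k}$. Over a general commutative ring the orthogonal complement $W=(\R v_1\oplus\R v_{k+1})^{\perp}$ is only \emph{stably} free (it satisfies $W\oplus\R^2\cong\R^{2k}$), and stably free need not be free; the paper's own Example~\ref{Example:NotSurj}, the coordinate ring of $S^2$, illustrates that unimodular-extension--type statements fail over such rings. A symplectic basis of $W$ exists if and only if $W$ is free, so without a further argument your matrix $B$ may simply not exist. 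Your fallback route --- USC $\Rightarrow$ stable rank $1$ of $\R/\I$ (which is correct, via Lemma~\ref{lemma:UnitalQuotientIdeal} with $k=2$) $\Rightarrow$ $SP_{2k}(\R/\I)$ is generated by elementary symplectic transvections --- is plausible and would close the argument, but it rests on stability theorems that you neither prove nor cite precisely, so as written it does not constitute a proof.

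The paper avoids the hyperbolic-pair-completion problem entirely by never trying to lift unimodular columns and extend them. Instead it picks any lift $g$ of $\bar g$ and multiplies it on the left and right by \emph{explicit} elements of $SP_{2k}(\R)$ (Steps I--IX), reducing $g$ step by step to $\Diag(1,\ldots,1,t,1,\ldots,1,t^{-1})$ with $t\bar t^{-1}\equiv 1\bmod\I$, which is lifted by hand in Step IX via $SP_2=SL_2$. The only extension statement it uses is Proposition~\ref{prop:UnitRowExt}, which assumes the given vector has a coordinate that is a \emph{unit in $\R$} (not merely unimodular, not merely a unit $\bmod\I$); this hypothesis is always available there because the relevant vectors are constructed by hand with a $1$ in one slot, so no freeness or cancellation issue arises. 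The USC enters through Lemma~\ref{lemma:UnitalQuotientIdeal}, which supplies the column-combination making $t_1$ a unit $\bmod\I$, and the known surjectivity $SL_k(\R)\to SL_k(\R/\I)$ under USC (Theorem~\ref{theorem:SurjModIdeal}) is used repeatedly to normalize diagonal blocks. In short: the paper's proof is an explicit elementary-matrix reduction that sidesteps your module-theoretic obstruction, whereas your proof, to be complete, needs either a symplectic-cancellation lemma valid over arbitrary $\R$ (which is false as stated) or a precise appeal to elementary generation of $SP_{2k}$ over rings of stable rank $1$.
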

%%%%%%%%%%%%%%%%%%%%%%%%%%%%%%%%%%%%%%%%%%%%%%%%%%%%%%%%%%%%%%%%%%%%%%%%%%%%%%%%%%%%%%%%%
%%%%%%%%%%%%%%%%%%%%%%%%%%%%%%%%%%%%%%%%%%%%%%%%%%%%%%%%%%%%%%%%%%%%%%%%%%%%%%%%%%%%%%%%%
%%%%%%%%%%%%%%%%%%%%%%%%%%%%%%%%%%%%%%%%%%%%%%%%%%%%%%%%%%%%%%%%%%%%%%%%%%%%%%%%%%%%%%%%%

\subsection{\bf{The third main theorem}}
The third main theorem is stated as follows:

%%%%%%%%%%%%%%%%%%%%%%%%%%%%%%%%%%%%%%%%%%%%%%%%%%%%%%%%%%%%%%%%%%%%%%%%%%%%%%%%%%%%%%%%%
%%%%%%%%%%%%%%%%%%%%%%%%%%%%%%%%%%%%%%%%%%%%%%%%%%%%%%%%%%%%%%%%%%%%%%%%%%%%%%%%%%%%%%%%%
%%%%%%%%%%%%%%%%%%%%%%%%%%%%%%%%%%%%%%%%%%%%%%%%%%%%%%%%%%%%%%%%%%%%%%%%%%%%%%%%%%%%%%%%%

\begin{thmOmega}
\namedlabel{theorem:FullGenSurj}{$\Gom$}
Let $\R$ be a commutative ring with unity. 
Let $k\in \mbb{N}$ and $\mcl{I}_i,0\leq i\leq k$ be mutually co-maximal ideals in $\R$ such that either the ideal $\mcl{I}=\us{i=0}{\os{k}{\prod}}\mcl{I}_i$ satisfies the USC or $\mcl{I}=\R$.
Let $m_j^i\in \mbb{N}, 0\leq i,j\leq k$. 
Then the map 
\equ{SL_{k+1}(\R) \lra \us{i=0}{\os{k}{\prod}}\mbb{PF}^{k,(m^i_0,m^i_1,\ldots,m^i_k)}_{\I_i}} given by
\equa{&A_{(k+1)\times (k+1)}=[a_{i,j}]_{0\leq i,j\leq k} \lra\\
&\big([a_{0,0}:a_{0,1}:\ldots: a_{0,k}],[a_{1,0}:a_{1,1}:\ldots: a_{1,k}],\ldots,[a_{k,0}:a_{k,1}:\ldots: a_{k,k}]\big)}
is surjective. 
\end{thmOmega}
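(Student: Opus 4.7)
The plan is as follows. Set $\I = \us{i=0}{\os{k}{\prod}} \I_i$. The degenerate case $\I = \R$ forces every $\I_i = \R$ by mutual coprimality (since $\R = \prod \I_i \subs \I_i$), making the target a single point; so assume $\I \sbnq \R$ satisfies the USC. Given data $\xi_i = [b_0^i : \ldots : b_k^i]_{\I_i} \in \mbb{PF}^{k,(m_0^i,\ldots,m_k^i)}_{\I_i}$ with representatives $(b_0^i,\ldots,b_k^i) \in \mcl{GCD}_{k+1}(\R)$ for $0 \le i \le k$, the goal is to construct $A = [a_{i,j}] \in SL_{k+1}(\R)$ satisfying $a_{i,j} \equiv b_j^i \pmod{\I_i}$ for all $i,j$. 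The $i$-th row of such an $A$ then lies in $\xi_i$ (taking $\gl = 1$ in the equivalence), so the weights $(m_j^i)$ never really enter the argument: matching a specific representative suffices.

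The first step is to produce, for each $i$, a matrix $A_i \in SL_{k+1}(\R/\I_i)$ whose $i$-th row is $(b_0^i,\ldots,b_k^i) \bmod \I_i$. This amounts to completing a prescribed unimodular row to an $SL$ matrix over $\R/\I_i$, and for that I establish stable range one of $\R/\I_i$. The USC on $\I$ implies stable range one of $\R/\I$: given $\bar{a}_1, \bar{a}_2 \in \R/\I$ generating $\R/\I$, lift to $a_1, a_2 \in \R$ with $\ga a_1 + \gb a_2 = 1 + r$ for some $r \in \I$, observe that $\{a_1, a_2, r\} \subs \R$ is unital, and apply the USC to produce $b = c a_2 + d r \in \langle a_2, r \rangle$ with $a_1 + b$ a unit modulo $\I$; reducing gives $\bar{a}_1 + \bar{c}\bar{a}_2 \in (\R/\I)^*$. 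Since stable range one passes to quotient rings, $\R/\I_i$ inherits this property, and the standard completion procedure (use the stable range relation to force a unit into a single slot, then rescale a row to obtain determinant one, correcting sign if needed) delivers $A_i$ with the prescribed $i$-th row.

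I then combine the $A_i$ via the Chinese Remainder Theorem. Mutual coprimality of the $\I_j$ gives $\R/\I \cong \us{i=0}{\os{k}{\prod}} \R/\I_i$ and consequently $SL_{k+1}(\R/\I) \cong \us{i=0}{\os{k}{\prod}} SL_{k+1}(\R/\I_i)$; the tuple $(A_0,\ldots,A_k)$ assembles into a single $\wti{A} \in SL_{k+1}(\R/\I)$. The strong approximation theorem for special linear groups from~\cite{MR3887364} then provides surjectivity of $SL_{k+1}(\R) \ra SL_{k+1}(\R/\I)$ whenever $\I$ satisfies the USC, and any lift $A \in SL_{k+1}(\R)$ of $\wti{A}$ is the required matrix: $A \bmod \I_i = A_i$ forces the $i$-th row of $A$ to reduce to $(b_0^i,\ldots,b_k^i) \bmod \I_i$ and so to represent $\xi_i$, while the rows of $A$ are automatically unimodular in $\R$ by cofactor expansion of $\det A = 1$.

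The main obstacle is the completion step: placing a prescribed unimodular row at the $i$-th position of an $SL_{k+1}(\R/\I_i)$ matrix. This rests on extracting stable range one for $\R/\I_i$ from the USC on $\I$, as sketched above. Once that is secured, the CRT assembly and the single invocation of the previously established strong approximation for $SL$ finish the proof routinely; note in particular that, somewhat surprisingly, Theorem~\ref{theorem:GenCRTSURJ} is not needed here, because making the $i$-th row of $A$ congruent to $(b_0^i,\ldots,b_k^i)$ modulo $\I_i$ is enough to land in the class $\xi_i$ regardless of the weight vector.
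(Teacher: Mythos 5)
Your proof is correct, and it takes a genuinely different route from the paper's. The paper proves Theorem~\ref{theorem:FullGenSurj} by first establishing the intermediate Theorem~\ref{theorem:GenSurjMainGenIdeals} via explicit matrix manipulation over $\R$: working column by column, it uses Proposition~\ref{prop:bringunit} with carefully chosen auxiliary ideals $\mcl{J}=\mcl{I}_0\cdots\mcl{I}_{i-1}$ to force each $a_{i,i}$ to be a unit modulo $\mcl{I}_i$ while keeping $a_{i,j}\in\mcl{I}_i$ for $j\neq i$ and preserving the earlier rows, then invokes Proposition~\ref{prop:diagdetone} to replace the resulting diagonal by one with product $\equiv 1\pmod{\mcl{I}}$, and finally lifts via Theorem~\ref{theorem:SurjModIdeal}. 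You instead work locally in each quotient $\R/\mcl{I}_i$: you extract stable range one of $\R/\mcl{I}$ from the USC (this is essentially Lemma~\ref{lemma:UnitalQuotientIdeal} together with Lemma~\ref{lemma:UnitalSuperIdeal}, though you phrase it as passage of stable range one to quotients), use it to complete the prescribed unimodular row to a matrix $A_i\in SL_{k+1}(\R/\mcl{I}_i)$, glue the $A_i$ by the CRT isomorphism $SL_{k+1}(\R/\mcl{I})\cong\prod_i SL_{k+1}(\R/\mcl{I}_i)$, and then lift once by Theorem~\ref{theorem:SurjModIdeal}. Both proofs hinge on the same strong approximation result and on the observation that matching a single representative of each class suffices (so the weights $m_j^i$ play no role), but your version factors the combinatorics through the CRT isomorphism at the group level rather than through a diagonal determinant adjustment (Proposition~\ref{prop:diagdetone}), and replaces the paper's row-by-row invariant-preserving reduction with the standard fact that unimodular rows complete to $SL$ matrices over a ring of stable range one. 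The net effect is a more modular and conceptually shorter argument; the paper's more hands-on reduction has the side benefit of exhibiting the explicit form of the lifting matrix and of proving the intermediate Theorem~\ref{theorem:GenSurjMainGenIdeals}, which is reused verbatim inside the proof of Theorem~\ref{theorem:GenSurjMain} for the symplectic case. One small point worth making explicit when you move the completed unimodular row from position $0$ to position $i$: a transposition of rows flips the determinant, so for $k\geq 2$ you compose with a second transposition of two rows both distinct from $i$, and for $k=1$ you negate the remaining row; either way the $i$-th row is untouched and the determinant is restored to $1$.
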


%%%%%%%%%%%%%%%%%%%%%%%%%%%%%%%%%%%%%%%%%%%%%%%%%%%%%%%%%%%%%%%%%%%%%%%%%%%%%%%%%%%%%%%%%
%%%%%%%%%%%%%%%%%%%%%%%%%%%%%%%%%%%%%%%%%%%%%%%%%%%%%%%%%%%%%%%%%%%%%%%%%%%%%%%%%%%%%%%%%
%%%%%%%%%%%%%%%%%%%%%%%%%%%%%%%%%%%%%%%%%%%%%%%%%%%%%%%%%%%%%%%%%%%%%%%%%%%%%%%%%%%%%%%%%

\subsection{\bf{The fourth main theorem}}

The fourth main theorem is stated as follows:

%%%%%%%%%%%%%%%%%%%%%%%%%%%%%%%%%%%%%%%%%%%%%%%%%%%%%%%%%%%%%%%%%%%%%%%%%%%%%%%%%%%%%%%%%
%%%%%%%%%%%%%%%%%%%%%%%%%%%%%%%%%%%%%%%%%%%%%%%%%%%%%%%%%%%%%%%%%%%%%%%%%%%%%%%%%%%%%%%%%
%%%%%%%%%%%%%%%%%%%%%%%%%%%%%%%%%%%%%%%%%%%%%%%%%%%%%%%%%%%%%%%%%%%%%%%%%%%%%%%%%%%%%%%%%

\begin{thmSigma}
\namedlabel{theorem:FullGenSurjOne}{$\Gs$}
	Let $\R$ be a commutative ring with unity. 
	Let $k\in \mbb{N}$ and $\mcl{I}_i,1\leq i\leq 2k$ be mutually co-maximal ideals in $\R$ such that either the ideal $\mcl{I}=\us{i=1}{\os{2k}{\prod}}\mcl{I}_i$ satisfies the USC or $\mcl{I}=\R$.
	Let $m_j^i\in \mbb{N}, 1\leq i,j\leq 2k$. 
	Then the map 
	\equ{SP_{2k}(\R) \lra \us{i=1}{\os{2k}{\prod}}\mbb{PF}^{2k-1,(m^i_1,m^i_2,\ldots,m^i_{2k})}_{\I_i}} given by
	\equa{&A_{2k\times 2k}=[a_{i,j}]_{1\leq i,j\leq 2k} \lra\\
		&\big([a_{1,1}:a_{1,2}:\ldots: a_{1,2k}],[a_{2,1}:a_{2,2}:\ldots: a_{2,2k}],\ldots,[a_{2k,1}:a_{2k,2}:\ldots: a_{2k,2k}]\big)}
	is surjective. 
\end{thmSigma}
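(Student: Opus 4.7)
The strategy mirrors the proof of Theorem~\ref{theorem:FullGenSurj} for the special linear case: combine the Chinese Remainder isomorphism with the strong approximation result Theorem~\ref{theorem:SurjModIdealSP} for $SP_{2k}$ and a symplectic analogue of the unimodular row-completion argument underlying Theorem~\ref{theorem:FullGenSurj}. If $\mcl{I}=\R$, then since the $\mcl{I}_i$ are mutually co-maximal with product $\R$, each $\mcl{I}_i=\R$, so every target projective space is a singleton and any $A\in SP_{2k}(\R)$ maps onto the unique tuple. So assume henceforth that $\mcl{I}=\prod_{i=1}^{2k}\mcl{I}_i$ satisfies USC, and fix representatives $v_i=(v_{i,1},\ldots,v_{i,2k})\in\mcl{GCD}_{2k}(\R)$ of the target classes $[v_i]$ for $1\le i\le 2k$.

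Next, invoke the isomorphism $\R/\mcl{I}\cong\prod_{j=1}^{2k}\R/\mcl{I}_j$, which induces a group isomorphism $SP_{2k}(\R/\mcl{I})\cong\prod_{j=1}^{2k}SP_{2k}(\R/\mcl{I}_j)$. To construct a symplectic matrix $\bar A\in SP_{2k}(\R/\mcl{I})$ whose $i$-th row modulo $\mcl{I}_i$ lies in $[v_i]$, it is enough to construct, separately for each $j$, a matrix $A^{(j)}\in SP_{2k}(\R/\mcl{I}_j)$ whose $j$-th row equals $v_j\bmod\mcl{I}_j$ (or any scalar multiple representing the same class); the remaining $2k-1$ rows of $A^{(j)}$ are unconstrained and may be filled in any convenient way. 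The tuple $(A^{(1)},\ldots,A^{(2k)})$ then corresponds via CRT to $\bar A$.

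The construction of $A^{(j)}$ is the symplectic analogue of the row-completion step in the proof of Theorem~\ref{theorem:FullGenSurj}. Since $v_j$ is unimodular modulo $\mcl{I}_j$ and $J$ is invertible, $v_j J$ is unimodular as well, so one can pick $w_j\in(\R/\mcl{I}_j)^{2k}$ with $v_j J w_j^{\top}\equiv 1\pmod{\mcl{I}_j}$. The pair $(v_j,w_j)$ is then hyperbolic and spans a free rank-$2$ symplectic summand of $(\R/\mcl{I}_j)^{2k}$, whose orthogonal complement carries a non-degenerate symplectic form of rank $2k-2$. Inducting on $k$ (equivalently, successively applying elementary symplectic transvections, which act transitively on unimodular vectors) extends $(v_j,w_j)$ to a full symplectic basis of $(\R/\mcl{I}_j)^{2k}$ and hence to a matrix in $SP_{2k}(\R/\mcl{I}_j)$ with $v_j$ in a prescribed row; a symplectic permutation from the hyperoctahedral subgroup of $SP_{2k}$ then moves that row to position $j$, producing $A^{(j)}$.

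Finally, assemble $\bar A\in SP_{2k}(\R/\mcl{I})$ from the $A^{(j)}$ via CRT, and apply Theorem~\ref{theorem:SurjModIdealSP} (whose hypothesis is precisely that $\mcl{I}$ satisfies USC) to lift $\bar A$ to some $A\in SP_{2k}(\R)$ with $A\equiv\bar A\pmod{\mcl{I}}$. Since $\mcl{I}\subseteq\mcl{I}_i$ for every $i$, the $i$-th row of $A$ modulo $\mcl{I}_i$ coincides with that of $\bar A$ modulo $\mcl{I}_i$, which by construction lies in $[v_i]$, so $A$ is the required preimage. The principal obstacle is the symplectic basis-extension step above: over an arbitrary quotient $\R/\mcl{I}_j$, the orthogonal complement of a hyperbolic plane is only guaranteed to be projective of rank $2k-2$, not free, so the induction must be argued via symplectic transvections (in the spirit of Suslin--Kopeiko) rather than by a naive basis-chasing; this is the technical point that requires the most care and is precisely where the symplectic case diverges from the linear case handled in Theorem~\ref{theorem:FullGenSurj}.
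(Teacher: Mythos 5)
Your proof takes a genuinely different route than the paper's. The paper first proves Theorem~\ref{theorem:GenSurjMain}: given any matrix in $M_{2k\times 2k}(\R)$ with unital rows, it is congruent row-by-row modulo $\mcl{I}_1,\ldots,\mcl{I}_{2k}$ to an element of $SP_{2k}(\R)$, via an explicit twelve-step reduction using Propositions~\ref{prop:UnitRowExt},~\ref{prop:bringunit},~\ref{prop:diagdetone} and Theorem~\ref{theorem:SurjModIdeal}; Theorem~\ref{theorem:FullGenSurjOne} then falls out by picking representatives. Your plan is more modular: construct, for each $j$ separately, a matrix $A^{(j)}\in SP_{2k}(\R/\mcl{I}_j)$ whose $j$-th row is $v_j$, glue these by the CRT isomorphism $SP_{2k}(\R/\mcl{I})\cong\prod_j SP_{2k}(\R/\mcl{I}_j)$, and lift the result to $SP_{2k}(\R)$ by Theorem~\ref{theorem:SurjModIdealSP}. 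The global structure of this plan is sound, and it isolates the Chinese Remainder Theorem and the strong approximation cleanly from the row-completion problem.

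But there is a genuine gap at the heart of it, and you name it yourself: completing a unimodular row over $\R/\mcl{I}_j$ to a row of a symplectic matrix. Your proposed resolution --- that elementary symplectic transvections act transitively on unimodular vectors, ``in the spirit of Suslin--Kopeiko'' --- is false over an arbitrary commutative quotient $\R/\mcl{I}_j$; the obstruction is precisely the one you flag, that the orthogonal complement of a hyperbolic plane is only projective, and the Suslin--Kopeiko transitivity results are tied to polynomial rings and similar settings, none of which you are assuming. What \emph{does} make your step work is the hypothesis you never apply to it: the USC. Since $\mcl{I}\subseteq\mcl{I}_j$, Lemma~\ref{lemma:UnitalSuperIdeal} shows $\mcl{I}_j$ satisfies the USC, and Lemma~\ref{lemma:UnitalQuotientIdeal} then shows the zero ideal of $\R/\mcl{I}_j$ does too. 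Hence, given a unital $v_j\in(\R/\mcl{I}_j)^{2k}$, the USC produces $c_2,\ldots,c_{2k}$ with $v_{j,1}+\sum_{l\ge 2}c_l v_{j,l}$ a unit; Proposition~\ref{prop:UnitRowExt} applied to the column $(1,c_2,\ldots,c_{2k})^t$ (whose leading entry is a unit) gives $g\in SP_{2k}(\R/\mcl{I}_j)$ so that $v_jg$ has a unit entry; Proposition~\ref{prop:UnitRowExt} again completes $v_jg$ to $h\in SP_{2k}(\R/\mcl{I}_j)$, and $hg^{-1}$ has first row $v_j$, which a symplectic permutation moves to row $j$. With this substitution your outline goes through; as written, the claim that the basis-extension can be done over $\R/\mcl{I}_j$ has no justification in the stated generality and is exactly where the argument breaks.
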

We give appropriate motivating examples which are very interesting and relevant for this aricle.  
\begin{example}
	Let $\R=\Z$. Let $k=3$ and $\mcl{I}_0=241\Z,\mcl{I}_1=601\Z,\mcl{I}_2=1201\Z,\mcl{I}_3=1321\Z$ be the four distinct primes ideals in $\Z$.
	Let \equ{M=[m^i_j]_{0\leq i,j\leq 3}=\begin{pmatrix}
			2 & 5 & 3 & 10\\
			8 & 20 & 30 & 24\\
			1 & 50 & 48 & 40\\
			11 & 55 & 44 & 22\\
		\end{pmatrix}.}
	
	Consider the row unital matrix say for example 
	\equ{B=[b_{ij}]_{0\leq i,j\leq 3}=\begin{pmatrix}
			1 & 1 & 1 & 1\\
			1 & 1 & 1 & 1\\
			1 & 1 & 1 & 1\\
			1 & 1 & 1 & 1\\
	\end{pmatrix}}
	which gives rise to the element  
	\equa{&([1:1:1:1],[1:1:1:1],[1:1:1:1],[1:1:1:1],[1:1:1:1])\in\\ &\mbb{PF}_{(241)}^{4,(2,5,3,10)}\times \mbb{PF}_{(601)}^{4,(8,20,30,24)}\times \mbb{PF}_{(1201)}^{4,(1,50,48,40)}\times \mbb{PF}_{(1321)}^{4,(11,55,44,22)}.}
	
	So does there exist a matrix $A=[a_{ij}]_{0\leq i,j\leq 3}\in SL_4(\Z)$ such that $a_{ij}\equiv \gl_i^{m^i_j}b_{ij} \mod \mcl{I}_i, 0\leq i,j\leq 3,\gl_i\in \Z$ such that $\ol{\gl_i}\in \frac{\Z}{\mcl{I}_i}$ is a unit? The answer is yes and there does exist such a matrix $A\in SL_4(\Z)$ using Theorem~\ref{theorem:FullGenSurj}.

	Also does there exist a matrix $A=[a_{ij}]_{0\leq i,j\leq 3}\in SP_4(\Z)$ such that $a_{ij}\equiv \gl_i^{m^i_j}b_{ij} \mod \mcl{I}_i, 0\leq i,j\leq 3,\gl_i\in \Z$ such that $\ol{\gl_i}\in \frac{\Z}{\mcl{I}_i}$ is a unit? The answer is yes and there does exist such a matrix $A\in SP_4(\Z)$ using Theorem~\ref{theorem:FullGenSurjOne}.

\end{example}

Now we mention an example where Theorem~\ref{theorem:FullGenSurj} need not hold if we just have mutual co-maximalty of the ideals $\mcl{I}_i,0\leq i\leq k$.
\begin{example}
	\label{Example:NotSurj}
	Consider the ring $\R=\frac{\mbb{R}[X_1,X_2,X_3]}{\langle X_1^2+X_2^2+X_3^2-1\rangle}=\mbb{R}[x_1=\ol{X_1},x_2=\ol{X_2},x_3=\ol{X_3}]$. This ring does not satisfy unimodular extension property. Note the vector $(x_1,x_2,x_3)$ is unimodular. However there does not exist a matrix in $GL_3(\R)$ such that the first row is $(x_1,x_2,x_3)$. This is because, there is no non-vanishing tangent vector field over the two dimensional sphere $S^2$. Let $\mcl{I}_0=\langle 0\rangle,\mcl{I}_1=\R,\mcl{I}_2=\R, m^i_j=1, 0\leq i,j\leq 2,k=2$. The map $SL_3(\R) \lra \mbb{PF}_{\mcl{I}_0}^{2,(1,1,1)} \times \mbb{PF}_{\mcl{I}_1}^{2,(1,1,1)} \times \mbb{PF}_{\mcl{I}_2}^{2,(1,1,1)}$ is not surjective as the element $([x_1:x_2:x_3],*,*)\in \mbb{PF}_{\mcl{I}_0}^{2,(1,1,1)} \times \mbb{PF}_{\mcl{I}_1}^{2,(1,1,1)} \times \mbb{PF}_{\mcl{I}_2}^{2,(1,1,1)}$ is not in the image. 	
\end{example}

We mention an example which answers Example $2.12$ in C.~P.~Anil Kumar~\cite{CPAKI}.
\begin{example}
	\label{Example:OQ}
Let $\mcl{R}=\mbb{R}[x,y]$. Let $k=2,r,s,t\in \N$. Let $\mcl{I}_0=\langle x,y\rangle^t, \mcl{I}_1=\langle x-1,y\rangle^r,\mcl{I}_2=\langle x,y-1\rangle^s$.
$M=[m^i_j]_{0\leq i,j\leq 2}=\matthree{2}{2}{2}{2}{2}{2}{2}{2}{2}$. Here Theorem~\ref{theorem:FullGenSurj} implies that the map $SL_3(\R)\lra \mbb{PF}_{\mcl{I}_0}^{2,(2,2,2)}\times  \mbb{PF}_{\mcl{I}_1}^{2,(2,2,2)} \times \mbb{PF}_{\mcl{I}_2}^{2,(2,2,2)}$ is surjective answering Example $2.12$ in C.~P.~Anil Kumar~\cite{CPAKI}.
\end{example}

%%%%%%%%%%%%%%%%%%%%%%%%%%%%%%%%%%%%%%%%%%%%%%%%%%%%%%%%%%%%%%%%%%%%%%%%%%%%%%%%%%%%%%%%%
%%%%%%%%%%%%%%%%%%%%%%%%%%%%%%%%%%%%%%%%%%%%%%%%%%%%%%%%%%%%%%%%%%%%%%%%%%%%%%%%%%%%%%%%%
%%%%%%%%%%%%%%%%%%%%%%%%%%%%%%%%%%%%%%%%%%%%%%%%%%%%%%%%%%%%%%%%%%%%%%%%%%%%%%%%%%%%%%%%%

\section{\bf{Definitions of some Classical Groups}}
Now we define various groups over a general commutative ring with unity.

%%%%%%%%%%%%%%%%%%%%%%%%%%%%%%%%%%%%%%%%%%%%%%%%%%%%%%%%%%%%%%%%%%%%%%%%%%%%%%%%%%%%%%%%%
%%%%%%%%%%%%%%%%%%%%%%%%%%%%%%%%%%%%%%%%%%%%%%%%%%%%%%%%%%%%%%%%%%%%%%%%%%%%%%%%%%%%%%%%%
%%%%%%%%%%%%%%%%%%%%%%%%%%%%%%%%%%%%%%%%%%%%%%%%%%%%%%%%%%%%%%%%%%%%%%%%%%%%%%%%%%%%%%%%%

\begin{defn}[Classical Groups over a Ring]
	Let $\R$ be a commutative ring with unity. Let $k,n,p,q\in \mbb{N}$. 
	\begin{enumerate}
		\item The $(2k\times 2k)\operatorname{-}$symplectic group over the ring $\R$ is defined as
		\equ{SP_{2k}(\R)=\bigg\{A\in M_{2k\times 2k}(\R) \mid A^tJA=J\text{ where }J=\mattwo {0_{k\times k}}{I_{k\times k}}{-I_{k\times k}}{0_{k\times k}}\bigg\}.}
		\item The $(n\times n)\operatorname{-}$orthogonal group over the ring $\R$ is defined as 
		\equ{O_{n}(\R)=\{A\in M_{n\times n}(\R) \mid A^tA=AA^t=I_{n\times n}\}.}
		\item The $(n\times n)\operatorname{-}$special orthogonal group over the ring $\R$ is defined as 
		\equ{SO_{n}(\R)=\{A\in M_{n\times n}(\R) \mid A^tA=AA^t=I_{n\times n},\Det(A)=1\}.}
		\item The $(p,q)\operatorname{-}$indefinite orthogonal group over the ring $\R$ is defined as 
		\equ{O_{p,q}(\R)=\bigg\{A\in M_{(p+q)\times (p+q)}(\R) \mid A^tJA=J\text{ where }J=\mattwo {I_{p\times p}}{0_{p\times q}}{0_{q\times p}}{-I_{q\times q}}\bigg\}.}
		\item The $(p,q)\operatorname{-}$special indefinite orthogonal group over the ring $\R$ is defined as 
		\equa{SO_{p,q}(\R)&=\bigg\{A\in M_{(p+q)\times (p+q)}(\R) \mid A^tJA=J,\Det(A)=1\text{ where }\\J&=\mattwo {I_{p\times p}}{0_{p\times q}}{0_{q\times p}}{-I_{q\times q}}\bigg\}.}
		\item The $(n\times n)\operatorname{-}$unipotent upper triangular group over the ring $\R$ is defined as 
		\equ{UU_n(\R)=\bigg\{A\in  M_{n\times n}(\R) \mid A \text{ is a unipotent upper triangular matrix}\bigg\}.}
	\end{enumerate}
\end{defn}

%%%%%%%%%%%%%%%%%%%%%%%%%%%%%%%%%%%%%%%%%%%%%%%%%%%%%%%%%%%%%%%%%%%%%%%%%%%%%%%%%%%%%%%%%
%%%%%%%%%%%%%%%%%%%%%%%%%%%%%%%%%%%%%%%%%%%%%%%%%%%%%%%%%%%%%%%%%%%%%%%%%%%%%%%%%%%%%%%%%
%%%%%%%%%%%%%%%%%%%%%%%%%%%%%%%%%%%%%%%%%%%%%%%%%%%%%%%%%%%%%%%%%%%%%%%%%%%%%%%%%%%%%%%%%

\section{\bf{Preliminaries}}
In this section we present some lemmas and propositions which are useful in the proof of main results.
\subsection{\bf{On arithmetic progressions}}
\label{sec:FundLemma}
In this section we prove a very useful lemma on arithmetic
progressions for integers and a proposition in the context of commutative rings with identity.
Remark~\ref{remark:FundLemma} below summarizes Lemma~\ref{lemma:FundLemma} and Proposition~\ref{prop:FundLemmaRings} in this section. 

%%%%%%%%%%%%%%%%%%%%%%%%%%%%%%%%%%%%%%%%%%%%%%%%%%%%%%%%%%%%%%%%%%%%%%%%%%%%%%%%%%%%%%%%%
%%%%%%%%%%%%%%%%%%%%%%%%%%%%%%%%%%%%%%%%%%%%%%%%%%%%%%%%%%%%%%%%%%%%%%%%%%%%%%%%%%%%%%%%%
%%%%%%%%%%%%%%%%%%%%%%%%%%%%%%%%%%%%%%%%%%%%%%%%%%%%%%%%%%%%%%%%%%%%%%%%%%%%%%%%%%%%%%%%%

\begin{lemma}[A lemma on Arithmetic Progressions for Integers]
\label{lemma:FundLemma}
~\\
Let $a,b\in \Z$ be integers such that $gcd(a,b)=1$. Let $m\in \Z$ be any non-zero integer. Then there exists $n_0\in \Z$ such that $gcd(a+n_0b,m)=1$.
\end{lemma}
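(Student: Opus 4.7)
The plan is to construct the required integer $n_0$ explicitly from the prime factorization of $m$, since the conclusion $\gcd(a + n_0 b, m) = 1$ is equivalent to the assertion that no prime divisor of $m$ divides $a + n_0 b$. So I would enumerate the distinct prime divisors $p_1, \ldots, p_r$ of $|m|$ and arrange, one prime at a time, that $a + n_0 b \not\equiv 0 \pmod{p_i}$.

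The key observation is that, by the coprimality hypothesis $\gcd(a,b) = 1$, each $p_i$ falls into exactly one of two mutually exclusive cases: either $p_i \nmid a$, or $p_i \mid a$ (and hence $p_i \nmid b$). In the first case I want $p_i \mid n_0$, so that $a + n_0 b \equiv a \not\equiv 0 \pmod{p_i}$; in the second case I want $p_i \nmid n_0$, so that $p_i \nmid n_0 b$ (as $p_i \nmid b$), and since $p_i \mid a$ this forces $a + n_0 b \equiv n_0 b \not\equiv 0 \pmod{p_i}$.

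Both desiderata are realised simultaneously by the single choice
\equ{n_0 \eqdef \prod_{\substack{p_i \mid m \\ p_i \nmid a}} p_i,}
the product being taken with no multiplicities, and interpreted as $1$ if empty. I would then finish by a short case check on each $p_i$: if $p_i \nmid a$, then $p_i$ appears in the product, so $p_i \mid n_0$ and the computation above applies; if $p_i \mid a$, then $p_i$ is absent from the product, and since $p_i$ is also coprime to $b$, the second computation applies. The only mild boundary case is when every prime of $m$ divides $a$ (so the product is empty and $n_0 = 1$), where $a + b \equiv b \not\equiv 0 \pmod{p_i}$ handles the verification directly.

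There is no real obstacle here; the lemma is essentially a one-line application of unique factorization combined with the partitioning of the primes of $m$ according to whether they divide $a$ or not. The construction is completely explicit and requires no appeal to Dirichlet's theorem or any analytic input, which is important because the subsequent Proposition generalising this to commutative rings will reuse the same packaging idea.
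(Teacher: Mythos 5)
Your proof is correct, and it takes a genuinely different route from the paper's. The paper argues via the Chinese Remainder Theorem: for each prime $q$ of $m$ with $q \nmid b$, the set of $n$ with $q \mid a+nb$ is a single residue class $t_q + q\Z$, and one then solves a shifted system $n \equiv t_q + 1 \pmod q$ simultaneously to land outside every bad class. Your argument instead writes down $n_0$ in closed form as the radical of the part of $m$ coprime to $a$, and verifies the two cases $p_i \nmid a$ (forcing $p_i \mid n_0$, so $a+n_0 b \equiv a$) and $p_i \mid a$ (forcing $p_i \nmid n_0 b$, so $a+n_0 b \equiv n_0 b$). This is more explicit and avoids CRT entirely. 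One small caveat about your closing remark: the paper's ring-theoretic analogue, Proposition~\ref{prop:FundLemmaRings}, in fact mirrors the paper's CRT packaging rather than your explicit product, since in a general commutative ring there is no unique factorization to hand you a canonical "product of the relevant primes"; so it is the CRT formulation, not the explicit radical, that transports to that setting. That does not affect the correctness of your proof of the integer lemma, only the transferability claim at the end.
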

\begin{proof}
Assume $a,b$ are both non-zero. Otherwise 
Lemma~\ref{lemma:FundLemma} is trivial. Let
$q_1,q_2,q_3,\ldots,q_t$ be the distinct prime factors of $m$.
Suppose $q \mid gcd(m,b)$ then $q \nmid a+nb$ for all $n \in
\mbb{Z}$. Such prime factors $q$ need not be considered. Let $q \mid
m, q \nmid b$. Then there exists $t_q \in \mbb{Z}$ such that the
exact set of elements in the given arithmetic progression divisible
by $q$ is given by \equ{\ldots, a+(t_q-2q)b, a+(t_q-q)b, a+t_qb,
a+(t_q+q)b,a+(t_q+2q)b \ldots} Since there are finitely many such
prime factors for $m$ which do not divide $b$ we get a set of
congruence conditions for the multiples of $b$ as $n \equiv t_q \mod\ q$. In order to get an $n_0$ we solve a different set of
congruence conditions for each such prime factor say for example $n
\equiv t_q+1 \mod\ q$. By Chinese remainder theorem we have such
solutions $n_0$ for $n$ which therefore satisfy $gcd(a+n_0b,m)=1$.
\end{proof}

%%%%%%%%%%%%%%%%%%%%%%%%%%%%%%%%%%%%%%%%%%%%%%%%%%%%%%%%%%%%%%%%%%%%%%%%%%%%%%%%%%%%%%%%%
%%%%%%%%%%%%%%%%%%%%%%%%%%%%%%%%%%%%%%%%%%%%%%%%%%%%%%%%%%%%%%%%%%%%%%%%%%%%%%%%%%%%%%%%%
%%%%%%%%%%%%%%%%%%%%%%%%%%%%%%%%%%%%%%%%%%%%%%%%%%%%%%%%%%%%%%%%%%%%%%%%%%%%%%%%%%%%%%%%%

\begin{prop}
\label{prop:FundLemmaRings} Let $\R$ be a commutative ring with identity. Let $f,g\in \R$ and they generate the unit ideal, that is, $\langle f\rangle+\langle g\rangle=\R$. Let $E$ be any
finite set maximal ideals in $\R$. Then there exists an element $a
\in \R$ such that $f+ag$ is a non-zero element in $\frac{\R}{\mcl{M}}$
for every $\mcl{M}\in E$.
\end{prop}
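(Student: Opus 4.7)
The plan is to translate the integer argument of Lemma~\ref{lemma:FundLemma} into ring-theoretic language, with the Chinese Remainder Theorem playing the role of its namesake in the integer proof, and the unit-ideal hypothesis $\langle f\rangle + \langle g\rangle = \R$ playing the role of the coprimality hypothesis $\gcd(a,b)=1$.

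First I would partition the finite set $E$ into $E_{0} = \{\mcl{M}\in E : g\in \mcl{M}\}$ and $E_{1} = \{\mcl{M}\in E : g\notin \mcl{M}\}$. For $\mcl{M}\in E_{0}$, the assumption $\langle f\rangle + \langle g\rangle = \R$ forces $f\notin \mcl{M}$, since otherwise $\mcl{M}$ would contain both $f$ and $g$ and hence the unit ideal, contradicting properness of $\mcl{M}$. Consequently, for every $a\in \R$, one has $f + ag \equiv f \not\equiv 0 \pmod{\mcl{M}}$, so the maximal ideals in $E_{0}$ impose no constraint on $a$.

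Second, for each $\mcl{M}\in E_{1}$ the image $\ol{g}$ is a unit in the field $\R/\mcl{M}$, so $f + ag \in \mcl{M}$ if and only if $\ol{a} = -\ol{f}\,\ol{g}^{\,-1}$ in $\R/\mcl{M}$. Since $\R/\mcl{M}$ is a field and therefore has at least two elements, I may pick any residue $s_{\mcl{M}}\in \R/\mcl{M}$ distinct from $-\ol{f}\,\ol{g}^{\,-1}$, for example $s_{\mcl{M}} = 1 - \ol{f}\,\ol{g}^{\,-1}$. This is the ring-theoretic analogue of replacing the condition $n\equiv t_{q}\pmod{q}$ in the integer proof by $n\equiv t_{q}+1\pmod{q}$.

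Finally, since distinct maximal ideals in a commutative ring with unity are pairwise co-maximal, the Chinese Remainder Theorem furnishes an element $a\in \R$ with $a\equiv s_{\mcl{M}} \pmod{\mcl{M}}$ for every $\mcl{M}\in E_{1}$. By construction, $f+ag\notin \mcl{M}$ for every $\mcl{M}\in E_{1}$, and this was already automatic for every $\mcl{M}\in E_{0}$; thus $f+ag$ is nonzero in $\R/\mcl{M}$ for each $\mcl{M}\in E$. There is no genuine obstacle in the argument — the content lies entirely in the case split on whether $g\in \mcl{M}$ and the use of CRT, both of which are routine once the correct analogy with the integer lemma is recognized.
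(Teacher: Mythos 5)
Your proposal is correct and follows essentially the same strategy as the paper's proof: split $E$ according to whether $g$ lies in the maximal ideal, observe that the ideals containing $g$ impose no constraint, identify the single forbidden residue $-\ol{f}\,\ol{g}^{-1}$ modulo each remaining ideal, and use the pairwise co-maximality of distinct maximal ideals together with CRT to dodge all of them simultaneously. The paper phrases the forbidden residue as the coset $t_{\mcl{M}}+\mcl{M}$ and shifts by $1$; your $s_{\mcl{M}}=1-\ol{f}\,\ol{g}^{-1}$ is the same choice expressed directly via the unit $\ol{g}^{-1}$ in $\R/\mcl{M}$.
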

\begin{proof}
Let $E=\{\mcl{M}_1,\mcl{M}_2,\ldots,\mcl{M}_t\}$. If $g\in \mcl{M}_i$ then for all $a\in
\R,f+ag\nin \mcl{M}_i$. Otherwise both $f,g \in \mcl{M}_i$ which is a
contradiction to $1 \in \langle f,g\rangle$.

Consider the finitely many maximal ideals $\mcl{M}\in E$ such
that $g \nin \mcl{M}$. Then there exists $t_{\mcl{M}}$ such that the
set \equ{\{t \mid f+tg \in \mcl{M}\}=t_{\mcl{M}}+\mcl{M}} a complete
arithmetic progression. This can be proved as follows. Since 
$g \nin \mcl{M}$ we have $\langle g\rangle+\mcl{M}=\R$. So
there exists $t_{\mcl{M}}$ such that $f+t_{\mcl{M}}g \in \mcl{M}$.
If $f+tg \in \mcl{M}$ then $(t-t_{\mcl{M}})g \in \mcl{M}$. Hence
$t \in t_{\mcl{M}}+\mcl{M}$.

Since there are finitely many maximal ideals $\mcl{M}$ such that $g
\nin \mcl{M}$ in the set $E$ we get a finite set of congruence
conditions for the multiples $a$ of $g$ as $a \equiv t_{\mcl{M}}
\mod\ \mcl{M}$. In order to get an $a_0$ we solve a different set of
congruence conditions for each such maximal ideal in $E$ say for
example $a \equiv t_{\mcl{M}}+1 \mod\ \mcl{M}$. By Chinese Remainder
Theorem we have such solutions $a_0$ for $a$ which therefore satisfy
$f+a_0g \nin \mcl{M}$ for all maximal ideals $\mcl{M} \in E$ and
hence  $f+a_0g\nin \mcl{M}$ for every $\mcl{M} \in E$. This proves 
Proposition~\ref{prop:FundLemmaRings}.
\end{proof}

%%%%%%%%%%%%%%%%%%%%%%%%%%%%%%%%%%%%%%%%%%%%%%%%%%%%%%%%%%%%%%%%%%%%%%%%%%%%%%%%%%%%%%%%%
%%%%%%%%%%%%%%%%%%%%%%%%%%%%%%%%%%%%%%%%%%%%%%%%%%%%%%%%%%%%%%%%%%%%%%%%%%%%%%%%%%%%%%%%%
%%%%%%%%%%%%%%%%%%%%%%%%%%%%%%%%%%%%%%%%%%%%%%%%%%%%%%%%%%%%%%%%%%%%%%%%%%%%%%%%%%%%%%%%%

\begin{remark}
\label{remark:FundLemma}
If $a,b\in \Z,gcd(a,b)=1$ then there exist $x,y\in \Z$ such that $ax+by=1$.
Here we note that in general $x$ need not be one unless $a\equiv 1\mod\ b$.
However for any non-zero integer $m$ we can always choose $x=1$ to find
an integer $a+by$ such that $gcd(a+by,m)=1$. In the context of rings this observation
gives rise to elements which are outside a given finite set of maximal ideals.
\end{remark}

%%%%%%%%%%%%%%%%%%%%%%%%%%%%%%%%%%%%%%%%%%%%%%%%%%%%%%%%%%%%%%%%%%%%%%%%%%%%%%%%%%%%%%%%%
%%%%%%%%%%%%%%%%%%%%%%%%%%%%%%%%%%%%%%%%%%%%%%%%%%%%%%%%%%%%%%%%%%%%%%%%%%%%%%%%%%%%%%%%%
%%%%%%%%%%%%%%%%%%%%%%%%%%%%%%%%%%%%%%%%%%%%%%%%%%%%%%%%%%%%%%%%%%%%%%%%%%%%%%%%%%%%%%%%%

\subsection{\bf{The unital lemma}}
\label{sec:UnitalLemma}
In this section we prove unital Lemma~\ref{lemma:Unital} which is useful to obtain a
unit modulo a certain type of an ideal in a $k\operatorname{-}$row unital vector via an $SL_k(\R)\operatorname{-}$elementary
transformation. 

%%%%%%%%%%%%%%%%%%%%%%%%%%%%%%%%%%%%%%%%%%%%%%%%%%%%%%%%%%%%%%%%%%%%%%%%%%%%%%%%%%%%%%%%%
%%%%%%%%%%%%%%%%%%%%%%%%%%%%%%%%%%%%%%%%%%%%%%%%%%%%%%%%%%%%%%%%%%%%%%%%%%%%%%%%%%%%%%%%%
%%%%%%%%%%%%%%%%%%%%%%%%%%%%%%%%%%%%%%%%%%%%%%%%%%%%%%%%%%%%%%%%%%%%%%%%%%%%%%%%%%%%%%%%%

First we state a proposition which gives a criterion for the USC.
\begin{prop}
\label{prop:Unital} Let $\R$ be a commutative ring with unity. Let
$\mcl{J}\sbnq \R$ be an ideal contained in only a finitely many maximal
ideals. Then $\mcl{J}$ satisfies the USC, that is, if $k \geq 2$ is a positive integer and if $\{a_1,a_2,\ldots,a_k\} \subs
\R$ is a unital set i.e. $\us{i=1}{\os{k}{\sum}}\langle a_i\rangle =\mcl{R}$, then there exists $a \in \langle a_2,\ldots,a_k\rangle$ such that
$a_1+a$ is a unit mod $\mcl{J}$.
\end{prop}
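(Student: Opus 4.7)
The plan is to reduce the $k$-element unital hypothesis on $\{a_1, a_2, \ldots, a_k\}$ to the two-element situation handled by Proposition~\ref{prop:FundLemmaRings}, and then to apply that proposition with the finite set of maximal ideals containing $\mcl{J}$.

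First I would use the unital hypothesis $\us{i=1}{\os{k}{\sum}}\langle a_i\rangle = \R$ to extract a Bezout-type identity $a_1 x_1 + a_2 x_2 + \ldots + a_k x_k = 1$ for some $x_1, \ldots, x_k \in \R$. Setting $g \eqdef a_2 x_2 + \ldots + a_k x_k$, one sees at once that $g \in \langle a_2, \ldots, a_k\rangle$ and that $a_1 x_1 + g = 1$, so the single element $g$ is comaximal with $a_1$, that is, $\langle a_1 \rangle + \langle g \rangle = \R$. This is the key reduction step, as it converts the multi-generator unital set into exactly the two-element input required by Proposition~\ref{prop:FundLemmaRings}.

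Next I would take $E$ to be the finite set of maximal ideals of $\R$ containing $\mcl{J}$, which is finite by hypothesis. Applying Proposition~\ref{prop:FundLemmaRings} with $f \eqdef a_1$ and this $g$ yields an element $c \in \R$ such that $a_1 + cg \notin \mcl{M}$ for every $\mcl{M} \in E$. Setting $a \eqdef cg \in \langle a_2, \ldots, a_k\rangle$, one then has $a_1 + a \notin \mcl{M}$ for every maximal ideal $\mcl{M}$ containing $\mcl{J}$. Since the maximal ideals of $\R/\mcl{J}$ correspond bijectively to the maximal ideals of $\R$ containing $\mcl{J}$, the image of $a_1 + a$ in $\R/\mcl{J}$ lies in no maximal ideal and is therefore a unit, which is precisely the USC conclusion. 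The only non-routine step is the passage from the $k$-element unital condition to the two-element comaximality $\langle a_1\rangle + \langle g\rangle = \R$; Proposition~\ref{prop:FundLemmaRings} then does all of the remaining work, and the finiteness of $E$ is the assumption under which that CRT-based argument goes through.
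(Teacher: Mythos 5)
Your proof is correct and follows essentially the same route as the paper's: extract a single element $g \in \langle a_2,\ldots,a_k\rangle$ comaximal with $a_1$ via a Bezout identity, and then apply Proposition~\ref{prop:FundLemmaRings} with $E$ the finite set of maximal ideals containing $\mcl{J}$. You spell out more explicitly than the paper does why avoiding every maximal ideal over $\mcl{J}$ makes $a_1+a$ a unit modulo $\mcl{J}$, which is a harmless and clarifying addition.
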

\begin{proof}
Let $\{\mcl{M}_i:1\leq i\leq t\}$ be the finite set of maximal
ideals containing $\mcl{J}$. For example $\mcl{J}$ could be a
product of maximal ideals. Since the set $\{a_1,a_2,\ldots,a_k\}$ is
unital there exists $d \in \langle a_2,a_3,\ldots,a_k\rangle$ such that
$\langle a_1\rangle +\langle d\rangle=\langle 1\rangle$. We apply Proposition~\ref{prop:FundLemmaRings}, where $E=\{\mcl{M}_i:1\leq i\leq t\}$ to conclude that
there exists $n_0 \in \R$ such that $a=n_0d$ and $a_1+a=a_1+n_0d \nin
\mcl{M}_i$ for $1 \leq i\leq t$. This proves 
Proposition~\ref{prop:Unital}.
\end{proof}
\begin{example}
\label{Example:USC}
~\\
\begin{itemize}
	  \item	If $\R$ is a prinicipal ideal domain or more generally a Dedekind domain. Then any non-zero ideal $\I \sbnq \R$ satisfies the USC using Proposition~\ref{prop:Unital}. 
	
	\item 
	Now we give examples of rings $\mcl{S}$ and ideals $\mcl{J}\sbnq \R$ which satisfy the USC but $\mcl{J}$ is contained in infinitely many maximal ideals of $\R$. 
	\begin{enumerate}
	\item 
	For this let $\mathbb{K}$ be a field and $\mcl{S}=\us{i\in \N}{\prod} \mathbb{K}_i$, an infinite direct product of fields where $\mathbb{K}_i=\mathbb{K},i\in\N$. Let $\mcl{J}$ be the zero ideal. Then $\mcl{J}$ is contained in infinitely many maximal ideals and $\mcl{J}$ satisifes the USC.
	
	\item Here let $\mcl{O}_i,i\in \N$ be an infinite collection of Dedekind domains. Let $\mcl{J}_i\sbnq \mcl{O}_i$ be an infinite collection of non-zero ideals in each Dedekind domain $\mcl{O}_i,i\in \N$. Let $\mcl{S}=\us{i\in \N}{\prod}\mcl{O}_i,\mcl{J}=\us{i\in \N}{\prod}\mcl{J}_i$. Then $\mcl{J}$ is an ideal in the ring $\mcl{S}$ and $\frac{\mcl{S}}{\mcl{J}}\cong\us{i\in \N}{\prod} \frac{\mcl{O}_i}{\mcl{J}_i}$. Now the ideal $\mcl{J}$ is contained in infinitely many maximal ideals of $\mcl{S}$ even though $\mcl{J}_i$ is contained in only a finitely many maximal ideals of $\mcl{O}_i$ for each $i\in \N$. Since $\mcl{J}_i\sbnq \mcl{O}_i$ satisfies the USC for each $i\in \N$, we have $\mcl{J}\sbnq \mcl{O}$ satisfies the USC.
	\end{enumerate}

\end{itemize}	

\end{example}
%%%%%%%%%%%%%%%%%%%%%%%%%%%%%%%%%%%%%%%%%%%%%%%%%%%%%%%%%%%%%%%%%%%%%%%%%%%%%%%%%%%%%%%%%
%%%%%%%%%%%%%%%%%%%%%%%%%%%%%%%%%%%%%%%%%%%%%%%%%%%%%%%%%%%%%%%%%%%%%%%%%%%%%%%%%%%%%%%%%
%%%%%%%%%%%%%%%%%%%%%%%%%%%%%%%%%%%%%%%%%%%%%%%%%%%%%%%%%%%%%%%%%%%%%%%%%%%%%%%%%%%%%%%%%

\begin{lemma}
\label{lemma:Unital} Let $\R$ be a commutative ring with unity and
$k \geq 2$ be a positive integer. Let $\{a_1,a_2,\ldots,a_k\} \subs
\R$ be a unital set i.e. $\us{i=1}{\os{k}{\sum}}\langle a_i\rangle=\mcl{R}$ and
$E$ be a finite set of maximal ideals in $\R$. Then there exists
$a \in \langle a_2,\ldots,a_k\rangle$ such that $a_1+a \nin
\mcl{M}$ for all $\mcl{M} \in E$.
\end{lemma}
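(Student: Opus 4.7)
The plan is to reduce this lemma to the already-proved Proposition~\ref{prop:FundLemmaRings} in essentially the same manner as the proof of Proposition~\ref{prop:Unital}. The key observation is that the lemma differs from Proposition~\ref{prop:Unital} only cosmetically: instead of a finite set of maximal ideals arising as those containing a specific ideal $\mcl{J}$, we are directly given a finite set $E$ of maximal ideals, and we ask for the conclusion $a_1+a\nin \mcl{M}$ for all $\mcl{M}\in E$ rather than ``unit mod $\mcl{J}$''. Since the conclusion of Proposition~\ref{prop:Unital} is phrased in terms of membership in the finitely many maximal ideals over $\mcl{J}$, the two statements are equivalent in content.

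Concretely, first I would use the unital hypothesis to collapse the $k$-element unital set into a $2$-element one. Since $\us{i=1}{\os{k}{\sum}}\langle a_i\rangle=\R$, there exist $r_1,r_2,\ldots,r_k\in\R$ with $a_1 r_1+a_2 r_2+\cdots+a_k r_k=1$. Define
\equ{d=a_2 r_2+a_3 r_3+\cdots+a_k r_k\in \langle a_2,a_3,\ldots,a_k\rangle.}
Then $a_1 r_1+d=1$, so $\langle a_1\rangle+\langle d\rangle=\R$, i.e.\ $a_1$ and $d$ generate the unit ideal.

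Next I would invoke Proposition~\ref{prop:FundLemmaRings} with $f=a_1$, $g=d$, and the given finite set $E$ of maximal ideals. It yields some $n_0\in \R$ such that $a_1+n_0 d\nin \mcl{M}$ for every $\mcl{M}\in E$. Setting $a=n_0 d$, we have $a\in\langle d\rangle\subs\langle a_2,\ldots,a_k\rangle$, and $a_1+a=a_1+n_0 d$ lies outside every maximal ideal of $E$, completing the proof.

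There is no serious obstacle here; the work has already been done in Proposition~\ref{prop:FundLemmaRings}. The only minor point worth noting is that the unital hypothesis is used precisely to extract, from the $k$ elements $a_1,\ldots,a_k$, a single element $d\in \langle a_2,\ldots,a_k\rangle$ coprime to $a_1$, so that the two-variable avoidance statement of Proposition~\ref{prop:FundLemmaRings} becomes applicable.
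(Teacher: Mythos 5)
Your proof is correct and matches the paper's approach exactly: the paper's proof of Lemma~\ref{lemma:Unital} simply observes that one repeats the argument of Proposition~\ref{prop:Unital} (extract $d\in\langle a_2,\ldots,a_k\rangle$ with $\langle a_1\rangle+\langle d\rangle=\R$, then apply Proposition~\ref{prop:FundLemmaRings} to $E$), and your write-up carries out precisely those steps. The only difference is that the paper states this as a brief reference while you have spelled it out in full.
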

\begin{proof}
The proof is essentially similar to Proposition~\ref{prop:Unital} even though we need not have to construct an ideal
$\mcl{J}$ which is contained in exactly the maximal ideals in the set $E$.
\end{proof}

Now we prove two lemmas which are useful later.
\begin{lemma}
\label{lemma:UnitalSuperIdeal}
Let $\R$ be a commutative ring with unity and let $\mcl{I}\sbnq \R$ be an ideal which satisfies the USC. Let $\mcl{J}$ be any ideal in $\R$ such that $\mcl{I}\subseteq \mcl{J}\sbnq \mcl{R}$. Then $\mcl{J}$ also satisfies the USC.
\end{lemma}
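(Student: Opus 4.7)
The plan is to reduce the USC for $\mathcal{J}$ directly to the USC for $\mathcal{I}$, exploiting the inclusion $\mathcal{I} \subseteq \mathcal{J}$ and the elementary fact that being a unit modulo a smaller ideal implies being a unit modulo any larger ideal.

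First I would fix an arbitrary unital set $\{a_1, a_2, \ldots, a_k\} \subseteq \mathcal{R}$ with $k \geq 2$, meaning $\sum_{i=1}^{k} \langle a_i \rangle = \mathcal{R}$. The goal is to produce some $b \in \langle a_2, \ldots, a_k \rangle$ for which $a_1 + b$ is a unit modulo $\mathcal{J}$. Since the hypothesis on $\mathcal{R}$ and on the set $\{a_1,\ldots,a_k\}$ is independent of which ideal we are looking at, I can apply the USC for $\mathcal{I}$ to this same unital set, obtaining an element $b \in \langle a_2, \ldots, a_k \rangle$ such that $\overline{a_1 + b}$ is a unit in $\mathcal{R}/\mathcal{I}$.

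Next I would promote this to a unit modulo $\mathcal{J}$. By definition of being a unit modulo $\mathcal{I}$, there exists $c \in \mathcal{R}$ with $(a_1+b)c - 1 \in \mathcal{I}$. Using $\mathcal{I} \subseteq \mathcal{J}$, the same congruence gives $(a_1+b)c - 1 \in \mathcal{J}$, so $\overline{a_1+b}$ is a unit in $\mathcal{R}/\mathcal{J}$ with inverse $\overline{c}$. This establishes the USC for $\mathcal{J}$ with the same witness $b$ that worked for $\mathcal{I}$.

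There is essentially no obstacle here; the lemma is a formal monotonicity statement and the entire content is the observation that the canonical surjection $\mathcal{R}/\mathcal{I} \twoheadrightarrow \mathcal{R}/\mathcal{J}$ sends units to units. The only minor point to notice is that the hypothesis $\mathcal{J} \subsetneq \mathcal{R}$ is needed so that $\mathcal{J}$ is an eligible ideal in the statement of the USC (Definition \ref{defn:UnitalSetCond}); without this, $\mathcal{R}/\mathcal{J}$ would be the zero ring, in which the notion of a unit is degenerate.
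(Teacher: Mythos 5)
Your proof is correct and follows exactly the same route as the paper's (which is given in one line): apply the USC for $\mathcal{I}$ to obtain $b$, then observe that a unit modulo $\mathcal{I}$ is automatically a unit modulo the larger ideal $\mathcal{J}$. Your write-up simply spells out that observation, together with the minor remark about why $\mathcal{J}\subsetneq\mathcal{R}$ is needed.
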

\begin{proof}
The lemma follows immediately because any element $r\in \R$ which is a unit modulo $\mcl{I}$ is also a unit modulo $\mcl{J}$.	
\end{proof}
\begin{lemma}
\label{lemma:UnitalQuotientIdeal}
Let $\R$ be a commutative ring with unity and let $\mcl{I}\sbnq \R$ be an ideal which satisfies the USC. Let $k\geq 2$ and $\{a_1,a_2,\ldots,a_k\}$ be a unital set modulo $\mcl{I}$, that is, $\{\ol{a_1},\ldots,\ol{a_k}\}$ is a unital set in $\frac{\R}{\I}$. Then there exists $b\in \langle a_2,\ldots,a_k\rangle$ such that $a_1+b$ is a unit modulo $\I$. In other words if $\I$ satisfies the USC in $\R$ then the zero ideal satisfies the USC in $\frac{\R}{\I}$.
\end{lemma}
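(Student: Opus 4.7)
The obstacle is that $\{a_1,\ldots,a_k\}$ need not itself be a unital set in $\R$; the hypothesis only says that its image is unital in $\R/\I$. So the USC of $\I$ in $\R$ cannot be applied directly to this set. The plan is to augment the set with a well-chosen element of $\I$ to recover unitality in $\R$, apply the USC, and then project back modulo $\I$.

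More concretely, since $\{\ol{a_1},\ldots,\ol{a_k}\}$ is unital in $\R/\I$, we have
\equ{\langle a_1,a_2,\ldots,a_k\rangle+\I=\R,}
so there exist $r_1,\ldots,r_k\in\R$ and $j\in\I$ with $r_1a_1+\cdots+r_ka_k+j=1$. Consequently the enlarged set $\{a_1,a_2,\ldots,a_k,j\}\subs\R$ is a unital set in $\R$ (in the sense of Definition~\ref{defn:unitalset}).

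Now apply the USC of $\I$ in $\R$ (Definition~\ref{defn:UnitalSetCond}) to this enlarged unital set, treating $a_1$ as the distinguished first element. This yields an element $b'\in\langle a_2,\ldots,a_k,j\rangle$ such that $a_1+b'$ is a unit modulo $\I$. Every element of $\langle a_2,\ldots,a_k,j\rangle$ can be written as $b+cj$ with $b\in\langle a_2,\ldots,a_k\rangle$ and $c\in\R$, so $b'=b+cj$ for some such $b,c$. Since $cj\in\I$, we have $a_1+b'\equiv a_1+b\pmod{\I}$, and therefore $a_1+b$ is itself a unit modulo $\I$. This $b$ has the required property, proving the lemma; in particular, taking $\I$ as an ideal of $\R$ and passing to $\R/\I$, the zero ideal of $\R/\I$ satisfies the USC. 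No step is really an obstacle — the only subtlety is the augmentation trick that manufactures a genuine unital set out of one that is unital only modulo $\I$.
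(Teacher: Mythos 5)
Your proof is correct and follows essentially the same route as the paper: both adjoin an element of $\I$ to manufacture a genuine unital set in $\R$, apply the USC to that enlarged set with $a_1$ distinguished, and then discard the $\I$-component of the resulting $b'$, which does not affect being a unit modulo $\I$. No gap.
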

\begin{proof}
Let $\us{i=1}{\os{k}{\sum}}\ol{b_i}\ol{a_i}=1\in \frac{\R}{\I}$ for some $b_i\in \R$. Then there exists $t\in \I$ such that $\us{i=1}{\os{k}{\sum}}b_ia_i+t=1\in \R$ for some $t\in \I$. So the set $\{a_1,a_2,\ldots,a_k,t\}$ is a unital set in $\R$. Since $\I$ satisfies the USC, there exists $b'\in \langle a_2,\ldots,a_k,t\rangle$ such that $a_1+b'$ is a unit modulo $\I$. Now we have $b'=b+s$ for some $b\in \langle a_2,\ldots,a_k\rangle$ and $s\in \I$. Hence $a_1+b$ is a unit modulo $\I$. This proves the lemma.
\end{proof}
%%%%%%%%%%%%%%%%%%%%%%%%%%%%%%%%%%%%%%%%%%%%%%%%%%%%%%%%%%%%%%%%%%%%%%%%%%%%%%%%%%%%%%%%%
%%%%%%%%%%%%%%%%%%%%%%%%%%%%%%%%%%%%%%%%%%%%%%%%%%%%%%%%%%%%%%%%%%%%%%%%%%%%%%%%%%%%%%%%%
%%%%%%%%%%%%%%%%%%%%%%%%%%%%%%%%%%%%%%%%%%%%%%%%%%%%%%%%%%%%%%%%%%%%%%%%%%%%%%%%%%%%%%%%%

\subsection{\bf{Results of strong approximation type}}
\label{sec:SMR}
~\\
This is a result of strong approximation type.
Here we give a criterion called the USC which is
given in Definition~\ref{defn:UnitalSetCond} and mention the following surjectivity theorem which is stated as:
\begin{theorem}
\label{theorem:SurjModIdeal} Let $\R$ be a commutative ring with
unity. Let $k\in \mbb{N}$. Let \equ{SL_k(\R)=\{A\in M_{k\times k}(\R) \mid \Det(A)=1\}}
Let $\mcl{I} \sbnq \R$ be an ideal which satisfies the USC. Then the redsction map
\equ{SL_k(\R) \lra SL_k(\frac{\R}{\mcl{I}})} is surjective. 
\end{theorem}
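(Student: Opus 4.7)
The plan is to reduce the surjectivity statement to the fact that every matrix in $SL_k(\mcl{S})$, where $\mcl{S}=\R/\mcl{I}$, can be expressed as a product of elementary matrices $E_{ij}(\ol{r})$. Once this is established, each factor lifts canonically to $E_{ij}(r)\in SL_k(\R)$ by selecting any preimage $r\in\R$ of $\ol{r}$, and the corresponding product of these lifts is an element of $SL_k(\R)$ reducing to the given $\ol{A}\in SL_k(\mcl{S})$. The enabling input is Lemma~\ref{lemma:UnitalQuotientIdeal}: since $\mcl{I}$ satisfies the USC in $\R$, the zero ideal in $\mcl{S}$ satisfies the USC, so every unital set in $\mcl{S}$ admits the USC-modification producing an \emph{actual} unit of $\mcl{S}$ (not merely a unit modulo some ideal).

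The technical heart will be an induction on $k$ proving $SL_k(\mcl{S})=E_k(\mcl{S})$. The base case $k=1$ is trivial. For the inductive step with $k\geq 2$ and $\ol{A}\in SL_k(\mcl{S})$, I would first apply USC in $\mcl{S}$ to the unital first column $(\ol{a_{11}},\ldots,\ol{a_{k1}})$ to produce $\ol{s_2},\ldots,\ol{s_k}\in\mcl{S}$ with $\ol{u}:=\ol{a_{11}}+\sum_{i=2}^{k}\ol{s_i}\,\ol{a_{i1}}$ a unit; this modification is realized by left-multiplication of $\ol{A}$ by $\prod_{i=2}^{k}E_{1i}(\ol{s_i})$. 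I would then left-multiply by $E_{i1}(-\ol{a_{i1}}\,\ol{u}^{-1})$ for $i\geq 2$ to clear the remaining entries of column $1$, and right-multiply by the appropriate matrices $E_{1j}(\cdot)$ for $j\geq 2$ to clear row $1$, putting the matrix in the block form $\begin{pmatrix}\ol{u} & 0\\ 0 & B'\end{pmatrix}$ with $B'$ a $(k-1)\times(k-1)$ matrix of determinant $\ol{u}^{-1}$. At this stage I would invoke the Whitehead identity, which writes $\mrm{diag}(\ol{u},\ol{u}^{-1})\in SL_2(\mcl{S})$ explicitly as a product of elementary matrices over any commutative ring; left-multiplying by $\mrm{diag}(\ol{u}^{-1},\ol{u},1,\ldots,1)$ transforms the block form into $\begin{pmatrix}1 & 0\\ 0 & B''\end{pmatrix}$ with $B''\in SL_{k-1}(\mcl{S})$, at which point the inductive hypothesis places $B''$ in $E_{k-1}(\mcl{S})$ and hence $\ol{A}$ itself in $E_k(\mcl{S})$.

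Once $\ol{A}=\prod_{l}E_{i_l j_l}(\ol{r_l})$ is in hand, the lifting step is immediate: choose any $r_l\in\R$ reducing to $\ol{r_l}$ and set $A:=\prod_{l}E_{i_l j_l}(r_l)\in SL_k(\R)$; this is a product of unit-determinant matrices over $\R$ and reduces to $\ol{A}$ entry-wise, giving surjectivity. The main technical obstacle I anticipate is precisely the diagonal-absorption step inside the induction: the block $B'$ has determinant $\ol{u}^{-1}\neq 1$, which blocks any direct induction, and it is the Whitehead identity that is essential to repackage the $(k-1)\times(k-1)$ block as a bona fide element of $SL_{k-1}(\mcl{S})$. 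The remaining work is a careful bookkeeping of the elementary factors accumulated on the two sides of $\ol{A}$, routine but requiring some care to verify that everything composes back to give $\ol{A}$ itself as a product of elementary matrices.
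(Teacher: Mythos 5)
The paper does not give its own proof of this theorem; it only cites Theorem~$1.7$ of C.~P.~Anil Kumar~\cite{MR3887364}. So there is no in-paper argument to compare against directly. However, your proof is correct, and it is closely parallel in substance to the style of argument the paper itself carries out for the symplectic analogue, Theorem~\ref{theorem:SurjModIdealSP}: use Lemma~\ref{lemma:UnitalQuotientIdeal} to manufacture a unit in the $(1,1)$ position by an $E_{1i}$-type operation, clear the first row and column, absorb the unit diagonal entry into the complementary block, and recurse. Where the paper's symplectic proof maintains a lift in $M_{2k}(\R)$ throughout and repeatedly invokes $SL_n$-surjectivity as a black box (e.g.\ in Steps~II, VIII, IX), you instead work entirely inside $\mcl{S}=\R/\mcl{I}$, establish $SL_k(\mcl{S})=E_k(\mcl{S})$ by induction on $k$, and only lift at the very end. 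The two ingredients you isolate are exactly the right ones: Lemma~\ref{lemma:UnitalQuotientIdeal} transfers USC to the zero ideal of $\mcl{S}$ (so the pivot you create is an actual unit of $\mcl{S}$, not merely a unit modulo a further ideal), and Whitehead's identity
\equ{\mattwo{\ol{u}}{0}{0}{\ol{u}^{-1}}=\mattwo{1}{\ol{u}}{0}{1}\mattwo{1}{0}{-\ol{u}^{-1}}{1}\mattwo{1}{\ol{u}}{0}{1}\mattwo{0}{-1}{1}{0}}
lets you push $\ol{u}$ into the lower block so that the induction hypothesis applies to a genuine element of $SL_{k-1}(\mcl{S})$. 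The lifting step is then trivial because the reduction $M_k(\R)\to M_k(\mcl{S})$ is a ring homomorphism, so it carries $\prod_l E_{i_lj_l}(r_l)$ to $\prod_l E_{i_lj_l}(\ol{r_l})$ and $\Det(E_{ij}(r))=1$ in $\R$. What your phrasing buys is a cleaner separation of concerns: you never have to track a partial lift in $\R$ through the elimination, and you get as a by-product the $K$-theoretic statement that the USC on the zero ideal (a strong form of stable range $1$) forces $SK_1$-type vanishing at every finite level $k$. I see no gap.
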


A proof of this important Theorem~\ref{theorem:SurjModIdeal} can be found in~(refer to Theorem $1.7$ on Page $338$) C.~P.~Anil~Kumar~\cite{MR3887364}.  A survey of results on strong approximation can be found in A.~S.~Rapinchuk~\cite{SA}. 

%%%%%%%%%%%%%%%%%%%%%%%%%%%%%%%%%%%%%%%%%%%%%%%%%%%%%%%%%%%%%%%%%%%%%%%%%%%%%%%%%%%%%%%%%
%%%%%%%%%%%%%%%%%%%%%%%%%%%%%%%%%%%%%%%%%%%%%%%%%%%%%%%%%%%%%%%%%%%%%%%%%%%%%%%%%%%%%%%%%
%%%%%%%%%%%%%%%%%%%%%%%%%%%%%%%%%%%%%%%%%%%%%%%%%%%%%%%%%%%%%%%%%%%%%%%%%%%%%%%%%%%%%%%%%

\subsection{\bf{Choice multiplier hypothesis for a tuple with respect to an ideal}}
We define a key concept known as choice multiplier hypothesis $CMH$ for a tuple with respect to an ideal. This definition is useful in the proof of main Theorem~\ref{theorem:GenCRTSURJ}.
\begin{defn}[$CMH$ for a tuple with respect to an Ideal]
\label{defn:CMH}
~\\
Let $\R$ be a commutative ring with unity and $\mcl{I}\sbnq \R$ be
any ideal. Let $n > 1$ be any positive integer and
$(x_1,x_2,\ldots,x_n) \in \R^n$ be such that $\langle x_1\rangle+\langle x_2\rangle+\ldots+\langle x_n\rangle+\mcl{I}=\R$. Suppose $\R$ has the property
that there exist $a_1,a_2,\ldots,a_n \in \R$ such that
$\langle a_1\rangle +\langle a_2\rangle +\ldots+\langle a_n\rangle =\R$ and $a_1x_1+a_2x_2+\ldots+a_nx_n \in
1+\mcl{I}$. Then we say $\R$ satisfies $CMH$
for the tuple $(x_1,x_2,\ldots,x_n) \in \R^n$ with respect to the ideal $\mcl{I}$. 
\end{defn}
\begin{remark}
A class of rings which satisfy CMH with respect to any proper ideal is given in Theorem~\ref{theorem:CMH}.
\end{remark}
We prove two lemmas which are useful in the proof of main Theorem~\ref{theorem:GenCRTSURJ}.

%%%%%%%%%%%%%%%%%%%%%%%%%%%%%%%%%%%%%%%%%%%%%%%%%%%%%%%%%%%%%%%%%%%%%%%%%%%%%%%%%%%%%%%%%
%%%%%%%%%%%%%%%%%%%%%%%%%%%%%%%%%%%%%%%%%%%%%%%%%%%%%%%%%%%%%%%%%%%%%%%%%%%%%%%%%%%%%%%%%
%%%%%%%%%%%%%%%%%%%%%%%%%%%%%%%%%%%%%%%%%%%%%%%%%%%%%%%%%%%%%%%%%%%%%%%%%%%%%%%%%%%%%%%%%

\begin{lemma}
\label{lemma:CMH}
Let $\R$ be a commutative ring with unity. Let $\mcl{I} \sbnq \R$ be an ideal.
$\R$ always satisfies $CMH$ with respect to the ideal $\mcl{I}$ for any positive integer $n>1$ for all tuples $(x_1,x_2,\ldots,x_n)\in \R^n$
when one of the $x_i:1 \leq i \leq n$ is a unit $\mod \mcl{I}$.
\end{lemma}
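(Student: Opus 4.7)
The plan is to exhibit the required tuple $(a_1,\ldots,a_n)$ explicitly. Since the hypothesis provides some index $j$ for which $x_j$ is a unit modulo $\mcl{I}$, I fix $u \in \R$ with $u x_j \equiv 1 \pmod{\mcl{I}}$, and because $n > 1$ I can select an index $k \in \{1,\ldots,n\}$ with $k \neq j$. I then propose the assignment
\[a_j := u(1 - x_k), \qquad a_k := 1, \qquad a_i := 0 \text{ for } i \notin \{j,k\}.\]

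Verifying the two clauses of Definition~\ref{defn:CMH} for this tuple is short. The unital condition $\langle a_1\rangle + \cdots + \langle a_n\rangle = \R$ holds immediately because $a_k = 1$ already generates the unit ideal. For the congruence, the sum collapses to
\[a_1 x_1 + \cdots + a_n x_n \;=\; u(1-x_k)\,x_j + x_k \;=\; u x_j - u x_j x_k + x_k,\]
and reducing modulo $\mcl{I}$ via $u x_j \equiv 1 \pmod{\mcl{I}}$ yields $1 - x_k + x_k = 1$, so $\sum_i a_i x_i \in 1 + \mcl{I}$, as required.

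There is essentially no obstacle in the argument. The role of the hypothesis $n > 1$ is precisely to supply a second slot $k$ that can hold the constant $1$; this placement simultaneously trivializes the unital requirement while permitting the compensating factor $(1 - x_k)$ in the $j$-th slot to absorb the unwanted cross term $u x_j x_k$ produced by inserting the inverse $u$. I expect the only thing worth flagging in the write-up is that $u$ need not itself be a unit in $\R$ (only mod $\mcl{I}$), which is why a naive choice $a_j = u$ and the remaining $a_i = 0$ would fail the unital condition, and why smuggling a $1$ into the tuple is necessary.
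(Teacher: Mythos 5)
Your proof is correct. It follows the same high-level strategy as the paper---directly exhibit a tuple $(a_1,\ldots,a_n)$ with only two nonzero slots---but the explicit construction differs. The paper writes $ax_1 + t = 1$ with $t \in \mcl{I}$ (taking $j=1$ without loss of generality), then sets $a_1 = a$, $a_2 = t$, and the rest zero: unitality follows from $ax_1 + t = 1 \in \langle a, t\rangle$, and $ax_1 + tx_2 = 1 + t(x_2 - 1) \in 1 + \mcl{I}$ since $t \in \mcl{I}$. You instead place the constant $1$ in slot $k$, making unitality trivial, and then scale the $j$-th coefficient by $(1 - x_k)$ to cancel the cross term $u x_j x_k$ modulo $\mcl{I}$. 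Both are equally short; your version localizes the ``generate the unit ideal'' burden entirely on $a_k = 1$, whereas the paper's version distributes it across $a$ and $t$ via the Bezout-style identity. Your closing remark about why $a_j = u$ alone would fail the unital condition is accurate and is a useful observation that the paper leaves implicit.
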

\begin{proof}
Let $n\geq 2$. Let $(x_1,x_2,\ldots,x_n)\in \R^n$. Without loss of generality let $x_1$ be a unit modulo $\mcl{I}$. Let $ax_1+t=1$ for some $a_1\in \R,t\in \I$. 
Then we choose $a_1=a,a_2=t$ and $a_3=\ldots=a_n=0$. We have $\langle a_1\rangle +\langle a_2\rangle =\R$ and $a_1x_1+a_2x_2=ax+tx_2=1+t(x_2-1)\in 1+\I$. This proves the lemma.
\end{proof}
\begin{lemma}
\label{lemma:CMHimpliesUnitalVect}
Let $\R$ be a commutative ring with unity and $\I\sbnq R$ be an ideal. Let $n\in \mbb{N}_{>1}$ and
$(x_1,x_2,\ldots,x_n) \in \R^n$ be such that $\langle x_1\rangle +\langle x_2\rangle +\ldots+\langle x_n\rangle+\mcl{I}=\R$. Suppose $\R$ satisfies CMH for the tuple $(x_1,x_2,\ldots,x_n)\in \R^n$
with respect to the ideal $\I$. Then there exists $t_1,t_2,\ldots,t_n\in\I$ such that $\us{i=1}{\os{n}{\sum}}\langle x_i+t_i\rangle =\R$. 
\end{lemma}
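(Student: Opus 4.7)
The plan is to unpack the CMH hypothesis and redistribute the error term into the $x_i$'s using the coprimality of the multipliers. Applying CMH for the tuple $(x_1,\ldots,x_n)$ with respect to $\I$, I obtain $a_1,\ldots,a_n\in\R$ with $\langle a_1\rangle+\cdots+\langle a_n\rangle=\R$ and an element $s\in\I$ such that
\equ{a_1 x_1+a_2 x_2+\cdots+a_n x_n=1+s.}

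Because the $a_i$ generate the unit ideal, I would choose a B\'ezout tuple $b_1,\ldots,b_n\in\R$ with $b_1 a_1+\cdots+b_n a_n=1$. Multiplying this identity by $s\in\I$ gives $s=\us{i=1}{\os{n}{\sum}}(sb_i)a_i$, and substituting back into the CMH identity yields
\equ{\us{i=1}{\os{n}{\sum}} a_i\bigl(x_i-sb_i\bigr)=1.}
Setting $t_i:=-sb_i$, each $t_i$ lies in $\I$ because $s\in\I$, and the displayed equation becomes $\us{i=1}{\os{n}{\sum}} a_i(x_i+t_i)=1$. This identity exhibits $1$ as an $\R$-linear combination of the elements $x_i+t_i$, so $\us{i=1}{\os{n}{\sum}}\langle x_i+t_i\rangle=\R$, which is exactly the desired conclusion.

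There is essentially no obstacle here: the argument is a one-step B\'ezout-type bookkeeping, absorbing the error $s\in\I$ produced by CMH into perturbations of the $x_i$'s using the coprimality of the $a_i$'s. The only verification needed is that $t_i=-sb_i\in\I$, which is immediate. Notice also that the ambient coprimality hypothesis $\langle x_1\rangle+\cdots+\langle x_n\rangle+\I=\R$ is implicit in the applicability of CMH, but is not invoked again in the construction, which is what one should expect: CMH is precisely the strengthening needed to upgrade coprimality modulo $\I$ to coprimality after an $\I$-perturbation.
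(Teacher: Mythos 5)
Your proof is correct and is essentially identical to the paper's: both invoke CMH to get $a_1,\ldots,a_n$ with $\sum\langle a_i\rangle=\R$ and $\sum a_ix_i\in 1+\I$, pick a B\'ezout tuple $b_i$ with $\sum b_ia_i=1$, and absorb the error term into the $x_i$ via $t_i=\mp sb_i$. The only difference is the sign convention for the error ($1+s$ versus $1-t$), which is immaterial.
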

\begin{proof}
By $CMH$ let $a_1,a_2,\ldots,a_n \in \R$ such that $\langle a_1\rangle +\langle a_2\rangle+\ldots+\langle a_n\rangle =\R$ and $a_1x_1+a_2x_2+\ldots+a_nx_n = 1-t \in 1+\mcl{I}$ where $t\in \I$.
Suppose $b_1a_1+b_2a_2+\ldots+b_na_n=1$. Then we have $a_1(x_1+tb_1)+a_2(x_2+tb_2)+\ldots+a_n(x_n+tb_n)=1$. Choosing $t_i=tb_i$ the lemma follows.
\end{proof}
\begin{theorem}
\label{theorem:CMH}
Let $\R$ be a Dedekind domain and $\mcl{I}\subsetneq \R$ be an ideal. Let $n\in \N_{>1}$ and $(x_1,x_2,\ldots,x_n) \in \R^n$ be such that $\langle x_1\rangle+\langle x_2\rangle+\ldots+\langle x_n\rangle +\mcl{I}=\R$. Then $\R$ satisfies $CMH$ for the tuple $(x_1,x_2,\ldots,x_n) \in \R^n$ with respect to the ideal $\mcl{I}$.
\end{theorem}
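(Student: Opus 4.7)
The plan is to start from an expression of $1$ as an $\R$-linear combination of the $x_i$ modulo $\mcl{I}$, and then use the Dedekind structure to upgrade the coefficients so that they themselves generate the unit ideal.

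First I dispose of the trivial case $\mcl{I}=\langle 0\rangle$: the hypothesis reads $\sum\langle x_i\rangle=\R$, any B\'ezout expression $\sum b_ix_i=1$ yields $a_i=b_i$, and the equation $\sum a_ix_i=1$ already forces $\langle a_1,\ldots,a_n\rangle=\R$ (otherwise all $a_i$ would lie in a common maximal ideal, which would then contain $1$). So the substantive case is $\mcl{I}\neq\langle 0\rangle$. The hypothesis then supplies $b_1,\ldots,b_n\in\R$ and $t\in\mcl{I}$ with $\sum_{i=1}^{n}b_ix_i+t=1$. I will set $a_i=b_i$ for $i\geq 2$ and $a_1=b_1+y$ for some $y\in\mcl{I}$ to be chosen. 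Regardless of $y$ one has $\sum a_ix_i=\sum b_ix_i+yx_1\in 1+\mcl{I}$, so the only remaining task is to pick $y$ so that $\langle a_1,\ldots,a_n\rangle=\R$, equivalently that no maximal ideal $\mcl{M}$ of $\R$ contains every $a_i$.

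I would then split the analysis by $\mcl{M}$. If $\mcl{M}\supset\mcl{I}$, then $y\in\mcl{M}$, so $a_i\equiv b_i\pmod{\mcl{M}}$ for all $i$; and $\sum b_ix_i=1-t\equiv 1\pmod{\mcl{M}}$ forces some $b_i$ (hence some $a_i$) to lie outside $\mcl{M}$, irrespective of $y$. If $\mcl{M}\not\supset\mcl{I}$ and $\mcl{M}\not\supset\langle b_1,\ldots,b_n\rangle$, again some $b_i\notin\mcl{M}$ and we are done. The only problematic maximal ideals form the set $E=\{\mcl{M}:\mcl{M}\supset\langle b_1,\ldots,b_n\rangle\text{ and }\mcl{M}\not\supset\mcl{I}\}$, and for each such $\mcl{M}$ I need $y\notin\mcl{M}$.

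The crucial point is that $E$ is finite: since $\mcl{I}$ is proper one has $t\neq 1$, hence $\sum b_ix_i=1-t\neq 0$, so $\langle b_1,\ldots,b_n\rangle$ is a nonzero ideal in the Dedekind domain $\R$, and therefore sits inside only finitely many maximal ideals. Each $\mcl{M}\in E$ is coprime to $\mcl{I}$ and the distinct elements of $E$ are pairwise coprime, so the Chinese remainder theorem delivers a surjection $\mcl{I}\twoheadrightarrow\prod_{\mcl{M}\in E}\R/\mcl{M}$, producing $y\in\mcl{I}$ with $y\notin\mcl{M}$ for every $\mcl{M}\in E$ (this is essentially Proposition~\ref{prop:FundLemmaRings} applied to $\mcl{I}$ and the finite collection $E$). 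With this $y$, $a_1=b_1+y$ avoids every maximal ideal in $E$, and the case analysis above shows $\langle a_1,\ldots,a_n\rangle=\R$, verifying the CMH. The \emph{main obstacle} is exactly the finiteness of $E$: this is the one place where the Dedekind hypothesis is indispensable, entering through the finite prime factorisation of the nonzero ideal $\langle b_1,\ldots,b_n\rangle$; once that finiteness is in hand, the construction of $y$ is a routine CRT step already implicit in the earlier unital lemmas.
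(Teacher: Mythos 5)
Your dispatch of the third case of maximal ideals is wrong, and this is a genuine gap. When $\mcl{M}\not\supset\mcl{I}$ and $\mcl{M}\not\supset\langle b_1,\ldots,b_n\rangle$, the fact that some $b_i\notin\mcl{M}$ does not yield some $a_i\notin\mcl{M}$: you have replaced $a_1$ by $b_1+y$, so if the only $b_i$ outside $\mcl{M}$ is $b_1$ (that is, $b_2,\ldots,b_n\in\mcl{M}$), the perturbation may well push $a_1$ into $\mcl{M}$. Your constraint on $y$ only controls its residues at the primes in $E$ and says nothing at such $\mcl{M}$. A concrete failure of the construction: take $\R=\Z$, $\mcl{I}=3\Z$, $n=2$, $(x_1,x_2)=(1,0)$, $(b_1,b_2)=(1,0)$, $t=0$. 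Then $\langle b_1,b_2\rangle=\Z$, so $E=\emptyset$ and your CRT step places no condition whatsoever on $y$; choosing $y=3\in\mcl{I}$ gives $(a_1,a_2)=(4,0)$ with $\sum a_ix_i=4\in 1+\mcl{I}$ but $\langle a_1,a_2\rangle=4\Z\neq\R$.

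The set of primes you actually need to control is not $E$ but the set of maximal ideals $\mcl{M}$ with $\mcl{M}\supset\langle a_2,\ldots,a_n\rangle=\langle b_2,\ldots,b_n\rangle$ and $\mcl{M}\not\supset\mcl{I}$, at which the needed congruence is $y\not\equiv -b_1\pmod{\mcl{M}}$; at the remaining maximal ideals over $\langle b_2,\ldots,b_n\rangle$, which do contain $\mcl{I}$, one has $y\in\mcl{M}$ and $b_1\notin\mcl{M}$ automatically, because $\langle b_1,\ldots,b_n\rangle+\mcl{I}=\R$. This in turn forces you to first arrange $\langle b_2,\ldots,b_n\rangle\neq 0$ (if $b_2=\cdots=b_n=0$, replace $a_2$ by a nonzero element of $\mcl{I}$, which does not disturb $\sum a_ix_i\bmod\mcl{I}$) before the Dedekind finiteness and CRT step can even be invoked. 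In short, one must be prepared to perturb more than just $b_1$; this is precisely what Proposition~$2.27$ of~\cite{MR3887364}, which the paper invokes, accomplishes by perturbing all coordinates of the $x$'s at once, after which the paper's proof of Theorem~\ref{theorem:CMH} is a one-liner. With the repairs above your plan becomes a correct and self-contained alternative to that citation.
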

\begin{proof}
Using Proposition $2.27$ in C.~P.~Anil Kumar~\cite{MR3887364}, we obtain $t_1,t_2,\ldots,t_n\in \mcl{I}$ such that the set $\{x_1+t_1,x_2+t_2,\ldots,x_n+t_n\}$ is unital in $\R$. So there exists $a_1,a_2,\ldots,a_n\in \R$ such that $a_1(x_1+t_1)+a_2(x_2+t_2)+\ldots+a_n(x_n+t_n)=1$. Hence $\langle a_1\rangle +\langle a_2\rangle +\ldots+\langle a_n\rangle =\R$ and $a_1x_1+a_2x_2+\ldots+a_nx_n\in 1+\mcl{I}$. 
\end{proof}
%%%%%%%%%%%%%%%%%%%%%%%%%%%%%%%%%%%%%%%%%%%%%%%%%%%%%%%%%%%%%%%%%%%%%%%%%%%%%%%%%%%%%%%%%
%%%%%%%%%%%%%%%%%%%%%%%%%%%%%%%%%%%%%%%%%%%%%%%%%%%%%%%%%%%%%%%%%%%%%%%%%%%%%%%%%%%%%%%%%
%%%%%%%%%%%%%%%%%%%%%%%%%%%%%%%%%%%%%%%%%%%%%%%%%%%%%%%%%%%%%%%%%%%%%%%%%%%%%%%%%%%%%%%%%

\section{\bf{Chinese Remainder Theorem for Generalized Projective Spaces}}
In this section we prove the first main Theorem~\ref{theorem:GenCRTSURJ} which concerns the surjectivity of the Chinese remainder 
reduction map associated to a projective space of an ideal with a given co-maximal ideal factorization when the product ideal satisfies the USC.

%%%%%%%%%%%%%%%%%%%%%%%%%%%%%%%%%%%%%%%%%%%%%%%%%%%%%%%%%%%%%%%%%%%%%%%%%%%%%%%%%%%%%%%%%
%%%%%%%%%%%%%%%%%%%%%%%%%%%%%%%%%%%%%%%%%%%%%%%%%%%%%%%%%%%%%%%%%%%%%%%%%%%%%%%%%%%%%%%%%
%%%%%%%%%%%%%%%%%%%%%%%%%%%%%%%%%%%%%%%%%%%%%%%%%%%%%%%%%%%%%%%%%%%%%%%%%%%%%%%%%%%%%%%%%

\begin{proof}
If $\mcl{I}=\R$ then the proof is easy. Now we ignore unit ideals which occur in the factorization $\mcl{I}=\us{i=1}{\os{k}{\prod}}\mcl{I}_i$.
The theorem holds for $k=1$ and any $l\in \mbb{N}$ as the proof is immediate. We prove by induction on $k$. Let
\equa{([a_{10}:a_{11}:\ldots:a_{1l}],\ldots,&[a_{k0}:a_{k1}:\ldots:a_{kl}])
\in\\& \mbb{PF}^{l,(m_0,m_1,\ldots,m_l)}_{\mcl{I}_1} \times \mbb{PF}^{l,(m_0,m_1,\ldots,m_l)}_{\mcl{I}_2} \times
\ldots \times \mbb{PF}^{l,(m_0,m_1,\ldots,m_l)}_{\mcl{I}_k}.} By induction we
have an element $[b_0:b_1:b_2:\ldots:b_l] \in
\mbb{PF}^{l,(m_0,m_1,\ldots,m_l)}_{\mcl{I}_2\mcl{I}_3\ldots\mcl{I}_k}$ representing the
last $k-1$ elements. Consider the matrix \equ{A=\begin{pmatrix}
a_{10}  & a_{11}  & \cdots & a_{1,l-1} & a_{1l}\\
b_0& b_1& \cdots & b_{l-1} & b_l
\end{pmatrix}}
where we have $\us{i=0}{\os{l}{\sum}}\langle a_{1i}\rangle =\R=\us{i=0}{\os{l}{\sum}}\langle b_i\rangle$.
We change the matrix $A$ to a suitable matrix $B$ by applying elementary column operations on $A$ or right multiply $A$ by matrices in $SL_{l+1}(\R)$. We keep track of the matrices in $SL_{l+1}(\R)$ used for mulitplication for back tracking later by their inverses at the end. 
Now $\mcl{I}\subseteq \mcl{I}_1\sbnq \mcl{R}$. Hence $\mcl{I}_1$ satisfies the USC using Lemma~\ref{lemma:UnitalSuperIdeal}. By a suitable application of a lower triangular matrix in $SL_{l+1}(\R)$ with diagonal entries equal to one, we can assume $a_{10}$ is a unit modulo $\mcl{I}_1$. 
By finding inverse of this element modulo $\mcl{I}_1$ and hence again by a suitable application of $SL_{l+1}(\R)$ matrix, the matrix $A$ can
be transformed to the following matrix $B$, where $a_0$ in the first row is a unit modulo $\mcl{I}_1$ and $a_i\in \mcl{I}_1$ for $1\leq i\leq l$ and also $\us{i=0}{\os{l}{\sum}}\langle a_i\rangle =\R$.
\equ{B=\begin{pmatrix}
a_0  & a_1  & \cdots & a_{l-1} & a_l\\
c_0& c_1& \cdots & c_{l-1} & c_l
\end{pmatrix}}

Now $\mcl{I}\subseteq \mcl{I}_2\ldots \mcl{I}_k \sbnq \R$. Hence  $\mcl{I}_2\ldots \mcl{I}_k$ satisfies the USC using Lemma~\ref{lemma:UnitalSuperIdeal}.  If $c_0$ is not a unit $\mod \mcl{I}_2\ldots \mcl{I}_k$ then by a suitable application of a lower triangular matrix in $SL_{l+1}(\R)$ with diagonal entries equal to one, we can assume the first element $c_0$ in the second row of $B$ is a unit modulo $\mcl{I}_2\ldots\mcl{I}_k$
and the first element $a_0$ in the first row will still remain a unit modulo $\mcl{I}_1$ after the transformation.
We have the following facts on the matrix $B$ now.

\begin{enumerate}
\item $a_0$ is a unit modulo $\mcl{I}_1$. 
\item $a_i \in \mcl{I}_1$ for $1\leq i\leq l$.
\item $\us{i=0}{\os{l}{\sum}}\langle a_i\rangle =\R$.
\item $c_0$ is a unit modulo $\mcl{I}_2\ldots\mcl{I}_k$.
\item $\us{i=0}{\os{l}{\sum}}\langle c_i\rangle =\R$.
\end{enumerate}

By usual Chinese Remainder Theorem let $x_0\equiv a_0\mod \mcl{I}_1,x_0\equiv c_0 \mod \mcl{I}_2\ldots\mcl{I}_k$. For $1\leq i\leq l$ let $x_i\in \mcl{I}_1,x_i\equiv c_i\mod \mcl{I}_2\ldots\mcl{I}_k$.
Then we have $\langle x_0\rangle +\langle x_1\rangle +\ldots+\langle x_l\rangle +\mcl{I}_1=\R,\langle x_0\rangle +\langle x_1\rangle +\ldots+\langle x_l\rangle +\mcl{I}_2\ldots\mcl{I}_k=\R$ which implies $\langle x_0\rangle +\langle x_1\rangle +\ldots+\langle x_l\rangle +\mcl{I}=\R$. Moreover $x_0$ is a unit modulo $\I$ as it is a unit modulo
both $\mcl{I}_1$ and $\mcl{I}_2\ldots\mcl{I}_k$. Since $l\geq 1$ by using Lemmas~\ref{lemma:CMH},~\ref{lemma:CMHimpliesUnitalVect} (as $CMH$ is satisfied), there exist $t_0,t_1,\ldots,t_l \in \I$
such that $\us{i=0}{\os{l}{\sum}} \langle x_i+t_i\rangle =\R$ and a required element for $B$ is given by $[x_0+t_0:x_1+t_1:\ldots:x_l+t_l]\in \mbb{PF}^{l,(m_0,m_1,\ldots,m_l)}_{\mcl{I}}$.  By back tracking we can get a required element for $A$. 
Hence the induction step is completed thereby proving surjectivity.

Now we prove injectivity of the map which is slightly easier. Suppose $[a_0:a_1:\ldots:a_l],[b_0:b_1:\ldots:b_l]$ have the same image. Then there exist $\gl_i\in \R,1\leq i\leq k$ such that $\ol{\gl_i}\in \frac{\R}{\mcl{I}_i}$ is a unit and we have $a_j=\gl_i^{m_j}b_j \mod \mcl{I}_i, 0\leq j\leq l, 1\leq i\leq k$. Now we use usual chinese remainder theorem to obtain an element $\gl\in \R$ such that $\gl\equiv \gl_i\mod \mcl{I}_i$. Hence it follows that $\ol{\gl}\in \frac{\R}{\mcl{I}}$ is a unit and $a_j=\gl^{m_j}b_j \mod \mcl{I}, 0\leq j\leq l$. So we get $[a_0:a_1:\ldots:a_l]=[b_0:b_1:\ldots:b_l]$. 
Now Theorem~\ref{theorem:GenCRTSURJ} follows.
\end{proof}
%%%%%%%%%%%%%%%%%%%%%%%%%%%%%%%%%%%%%%%%%%%%%%%%%%%%%%%%%%%%%%%%%%%%%%%%%%%%%%%%%%%%%%%%%
%%%%%%%%%%%%%%%%%%%%%%%%%%%%%%%%%%%%%%%%%%%%%%%%%%%%%%%%%%%%%%%%%%%%%%%%%%%%%%%%%%%%%%%%%
%%%%%%%%%%%%%%%%%%%%%%%%%%%%%%%%%%%%%%%%%%%%%%%%%%%%%%%%%%%%%%%%%%%%%%%%%%%%%%%%%%%%%%%%%

\section{\bf{Surjectivity of the map $SP_{2k}(\R) \lra SP_{2k}(\frac{\R}{\I})$}}
First we prove a proposition which states that if there is a unit in a row or column then the row or column can be extended to a symplectic matrix.
\begin{prop}[Unit in a Row or Column Extension]
	\label{prop:UnitRowExt}
	~\\
	Let $\R$ be a commutative ring with unity and $k\in \N$. Let $(a_1,a_2,\ldots,a_k,a_{k+1},\ldots,a_{2k})$ $\in \R^{2k}$ be such that one of $a_i,1\leq i\leq 2k$ is a unit in $\R$. Then there exist matrices $g_1,g_2\in SP_{2k}(\R)$ such that 
	the $l^{th}$-row of $g_1$ is given by $(g_1)_{lj}=a_j,1\leq j\leq 2k$ and the $l^{th}$-column of $(g_2)_{jl}=a_j,1\leq j\leq 2k$ for any $1\leq i\leq 2k$. 
\end{prop}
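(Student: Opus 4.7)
I plan to prove the column-extension version (constructing $g_2$) by an explicit construction; the row-extension version then follows by transposition, using that $SP_{2k}(\R)$ is closed under transpose—indeed, inverting $A^tJA=J$ and using $J^{-1}=-J$ gives $AJA^t=J$, so $g_1:=g_2^t$ will have $v=(a_1,\ldots,a_{2k})$ as its $l$-th row whenever $g_2$ has $v^t$ as its $l$-th column.

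By left-multiplying by a symplectic permutation matrix I can move the unit entry of $v$ into position $1$, and by right-multiplying by a symplectic permutation matrix I can shift the first column to the $l$-th column (and similarly for rows). The symplectic permutations needed are generated by two basic types: (i) the block permutations that simultaneously swap the index pairs $(p,q)\leftrightarrow(p+k,q+k)$ for $1\le p,q\le k$, and (ii) the symplectic swap exchanging positions $p$ and $p+k$ implemented by a $2\times 2$ block $\bigl(\begin{smallmatrix}0 & 1\\ -1 & 0\end{smallmatrix}\bigr)$ with identities elsewhere. Both types visibly satisfy $P^tJP=J$, so the reduction reduces to the base case $l=1$ with $a_1 \in \R^\times$.

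For the base case, I will construct $g_2$ directly. The idea is to take $c_1:=(a_1,a_2,\ldots,a_{2k})^t$ and $c_{k+1}:=a_1^{-1}e_{k+1}$ as a hyperbolic pair (one easily checks $c_1^tJc_{k+1}=1$), and then complete to a full symplectic basis by projecting each remaining standard basis vector $e_i$ (for $i \notin \{1,k+1\}$) onto the symplectic complement of $\mathrm{span}(c_1,c_{k+1})$, which is a free $\R$-module of rank $2k-2$ inheriting a non-degenerate alternating form. An explicit calculation of this projection gives, for $2 \le i \le k$,
\equ{c_i \;:=\; e_i + a_1^{-1}a_{k+i}\, e_{k+1}, \qquad c_{k+i} \;:=\; e_{k+i} - a_1^{-1}a_i\, e_{k+1},}
and I set $g_2:=(c_1\mid c_2\mid\cdots\mid c_{2k})$.

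The main verification is $g_2^tJg_2=J$, which splits into checking $c_i^tJc_j=J_{ij}$ in three regimes: the hyperbolic pairs $(c_1,c_{k+1})$ and $(c_i,c_{k+i})$ for $2\le i\le k$ (each equals $1$), the same-block pairings where both indices lie in $\{1,\ldots,k\}$ or both in $\{k+1,\ldots,2k\}$ (each vanishes), and the cross-pairings $c_1^tJc_i$, $c_1^tJc_{k+j}$, and $c_i^tJc_{k+j}$ with $i\ne j$ (each vanishes by a short direct computation using $Je_p=-e_{k+p}$ and $Je_{k+p}=e_p$ for $1\le p\le k$). There is no essential obstacle here; the only real care required is the routine index bookkeeping in this verification and the check that the permutation matrices used in the reduction step really are symplectic.
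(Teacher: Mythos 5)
Your proposal is correct, and it proves the result by a genuinely different explicit construction than the paper's.

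The paper works with the row version first: it builds $g_1=\begin{pmatrix}A & AB\\ 0 & (A^t)^{-1}\end{pmatrix}$ where $A=\Diag(a_1,a_1^{-1},1,\ldots,1)+\sum_{j\ge 2}a_jE_{1j}\in SL_k(\R)$ has first row $(a_1,\ldots,a_k)$ and $B$ is a tailored symmetric matrix chosen so that the first row of $AB$ is $(a_{k+1},\ldots,a_{2k})$; then it takes $g_2=g_1^t$. You instead attack the column version directly, completing the hyperbolic pair $(c_1,c_{k+1})=\bigl((a_1,\ldots,a_{2k})^t,\ a_1^{-1}e_{k+1}\bigr)$ to a symplectic basis by the explicit corrections $c_i=e_i+a_1^{-1}a_{k+i}e_{k+1}$ and $c_{k+i}=e_{k+i}-a_1^{-1}a_i e_{k+1}$. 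I verified all nine families of pairings $c_p^tJc_q=J_{pq}$ (using $Je_p=-e_{k+p}$, $Je_{k+p}=e_p$): the $\delta_{i1}$ and $-\delta_{1j}$ factors kill every cross term for $i,j\ge 2$, and the $c_1^tJc_i$, $c_1^tJc_{k+i}$ cancellations each reduce to $a_{k+i}-a_{k+i}$ or $a_i-a_i$. Your $g_2$ has block form $\begin{pmatrix}A' & 0\\ C' & (A'^t)^{-1}\end{pmatrix}$ with $A'$ lower-triangular having diagonal $(a_1,1,\ldots,1)$ and $A'^tC'$ symmetric, so it is the transpose-analogue of the paper's construction but with a different (not equal, not transposed) $A$; your choice makes the symplectic-basis reading transparent, while the paper's makes $A\in SL_k(\R)$ (determinant $1$) and records a side observation about the entries of $BA^t$ that it reuses later in Step(C) of Theorem~\ref{theorem:GenSurjMain}. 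The reduction step and the passage $g_1=g_2^t$ via $J^{-1}=-J$ are both fine, and your list of symplectic monomial generators matches the paper's $\mattwo{P}{0}{0}{P}$ and $\mattwo{0}{P}{-P}{0}$. One minor caution for the write-up: when you say "left-multiplying by a symplectic permutation matrix," the $\mattwo{0}{P}{-P}{0}$ generator is a signed permutation, so moving a unit from position $k+i$ to position $1$ can flip its sign; as the paper notes, this is harmless since $\pm a_{k+i}$ is still a unit and the undo step restores the original vector, but you should say so rather than implicitly treating these as genuine permutation matrices.
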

\begin{proof}
	We suppose $l=1$ using a permutation matrix in $N\in SP_{2k}(\R)$ of the form either $N=\mattwo{P}{0}{0}{P}$ or $N=\mattwo{0}{P}{-P}{0}$ with $Ne^{2k}_{l}=e_1^{2k}$ without loss of generality.
	The first column of a matrix $M$ is the $l^{th}$-column of the matrix $MN$. 
	
	If $k=1$ then the proof is easy as $SP_2(\R)=SL_2(\R)$. So assume $k\geq 2$.
	We suppose first that $a_1$ is a unit. Then we consider the matrix
	\equa{A&=\Diag(a_1,a_1^{-1},1,1,\ldots,1)+\us{j=2}{\os{k}{\sum}}a_jE_{1j}\in SL_k(\R)\text{ with first row }(a_1,a_2,\ldots,a_k)\\
		&=\mattwo{\mattwo {a_1}{a_2}{0}{a_1^{-1}}}{\mattwothree{a_3}{\cdots}{a_k}{0}{\cdots}{0}}{0_{(k-2)\times 2}}{I_{(k-2)\times (k-2)}}.}
	Consider the symmetric matrix \equa{B&=\mattwo{a_1^{-1}a_{k+1}-\us{j=2}{\os{k}{\sum}}a_1^{-2}a_ja_{k+j}}{R_{1\times (k-1)}}{C_{(k-1)\times 1}}{0_{(k-1)\times (k-1)}} \text{ where}\\  
		R_{1\times (k-1)}&=(a_1^{-1}a_{k+2},a_1^{-1}a_{k+3},\ldots,a_1^{-1}a_{2k}),\\ C_{(k-1)\times 1}&=(a_1^{-1}a_{k+2},a_1^{-1}a_{k+3},\ldots,a_1^{-1}a_{2k})^t.} Then the first row of 
	$AB$ is given by $(a_{k+1},a_{k+2},\ldots,a_{2k})$. Now the matrix 
	\equ{g_1=\mattwo {A}{0}{0}{(A^t)^{-1}}\mattwo{I}{B}{0}{I}=\mattwo{A}{AB}{0}{(A^t)^{-1}}\in SP_{2k}(\R)}
	has the required first row. For obtaining $g_2$ we take $g_2=g_1^t=\mattwo{A^t}{0}{B^tA^t}{A^{-1}}$ $=\mattwo{A^t}{0}{BA^t}{A^{-1}}$. 
	Then $g_2$ has the required first column. 
	
	Also the first row of $g_2$ is given by $(a_1,0,0,\ldots,0)=a_1e^{2k}_1\in \R^{2k}$. Moreover the entries of the matrix $BA^t$ are in the ideal $\langle a_{k+1},\ldots,a_{2k}\rangle \subseteq \R$. This little observation is useful later in the proof of Theorem~\ref{theorem:FullGenSurjOne}.

	If $a_i$ is a unit for some $1<i\leq k$ then using in addition a permutation matrix $\mattwo {P}{0}{0}{P}\in SP_{2k}(\R)$ such that $Pe^k_i=e^k_1$ where $e^k_j=(0,\ldots,0,1,0\ldots,0)^t\in \R^k$
	the $j^{th}$-standard vector for $1\leq j\leq k$, the positions of $a_1$ and $a_i$ can be interchanged. Now we construct a matrix $g$ as before so that the first row is $g$ is $(a_i,a_2,\ldots,a_{i-1},a_1,a_{i+1},\ldots,a_{2k})$.
	Then we exchange back the positions of $a_i$ and $a_1$. Now we obtain a matrix $g_1 \in SP_{2k}(\R)$ such that the first row of $g_1$ is $(a_1,a_2,\ldots,a_k,a_{k+1},\ldots,a_{2k})$.
	If $a_{k+i}$ is a unit for some $1\leq i\leq k$ then using in addition a matrix of the form $\mattwo {0}{P}{-P}{0}\in SP_{2k}(\R)$ such that $Pe^k_i=e^k_1$ for a transposition permutation matrix $P$ corresponding to $1$ and $i$
	and changing the signs $a_{k+1},\ldots,a_{2k}$ we obtain a matrix $g\in SP_{2k}(\R)$ such that the first row of $g$ is $(-a_{k+i}$, $-a_{k+2},\ldots,-a_{k+i-1},-a_{k+1},-a_{k+i+1},\ldots,-a_{2k},a_{i},a_2,\ldots,a_{i-1},a_1,a_{i+1},\ldots,a_k)$. 
	Now we obtain a matrix $g_1 \in SP_{2k}(\R)$ such that the first row of $g_1$ is $(a_1,a_2,\ldots,a_k$, $a_{k+1},\ldots,a_{2k})$ again using the same matrix $\mattwo 0P{-P}0$ three times.
	The proof for the column case follows by taking transpose. This proves the proposition. 
\end{proof}

We prove the second main Theorem~\ref{theorem:SurjModIdealSP} below.
\begin{proof}[Proof of Theorem~\ref{theorem:SurjModIdealSP}]
	If $k=1$ then the theorem follows because $SP_2(\R)=SL_2(\R)$, $SP_2\bigg(\frac{\R}{\I}\bigg)$ $=SL_2\bigg(\frac{\R}{\I}\bigg)$. So assume $k\geq 2$.
	Let $G\subseteq SP_{2k}\bigg(\frac{\R}{\I}\bigg)$ be the image of the reduction map. We need to prove that $G=SP_{2k}\bigg(\frac{\R}{\I}\bigg)$.
	Let \equa{g&=\mattwo{A=[a_{i,j}]_{k\times k}}{B=[b_{i,j}]_{k\times k}}{C=[c_{i,j}]_{k\times k}}{D=[d_{i,j}]_{k\times k}}\in M_{2k}(\R)\text{ with }\\
		\ol{g}&=\mattwo{\ol{A}}{\ol{B}}{\ol{C}}{\ol{D}}\in SP_{2k}\bigg(\frac{\R}{\I}\bigg).}
	We keep simplifying the element $g$ in various steps by multiplying it with elements in $SP_{2k}(\R)$ and by changing sometimes the element $g$ by an element $h$ such that $\ol{g}=\ol{h}\in SP_{2k}(\frac{\R}{\I})$ till we obtain that $g\in SP_{2k}(\R)$ after simplification.
	\begin{center}
		\boxed{\text{Step I: First Simplification, $a_{1,1}$ is a unit modulo }\I}
	\end{center}
	We have $\ol{g}^tJ\ol{g}=J \Ra \Det(\ol{g})$ is a unit in $\frac{\R}{\I}$. So the first row \equ{(a_{1,1},\ldots,a_{1,k},b_{1,1},\ldots,b_{1,k})} is a unital vector modulo $\I$. Since $\I$ satisfies the USC, using Lemma~\ref{lemma:UnitalQuotientIdeal} 
	there exists \equ{(1,e_{1,2},\ldots,e_{1,k},f_{1,1},\ldots,f_{1,k})^t\in \R^{2k}} such that we have that
	\equ{t_1=a_{1,1}+\us{j=2}{\os{k}{\sum}}a_{1,j}e_{1,j}+\us{j=1}{\os{k}{\sum}}b_{1,j}f_{1,j}\text{ is a is unit modulo }\I.}
	Now using Proposition~\ref{prop:UnitRowExt} there exists a matrix $g_2=\mattwo{E}{0}{F}{(E^t)^{-1}}\in SP_{2k}(\R)$ such that the first column of $g_2$ is $(1,e_{1,2},\ldots,e_{1,k},f_{1,1},\ldots,f_{1,k})^t$.
	Here \equ{E=\mattwo{\mattwo{1}{0}{e_{1,2}}{1}}{0_{2\times (k-2)}}{\matthreetwo{e_{1,3}}{0}{\vdots}{\vdots}{e_{1,k}}{0}}{I_{(k-2)\times (k-2)}}.}
	Consider the matrix $gg_2$. The element $(gg_2)_{1,1}=t_1$ is a unit modulo $\I$. 
	\begin{center}
		\boxed{\text{Step II:Second Simplification $a_{1,1}=1$}}
	\end{center}
	Let \equa{g&=\mattwo{A=[a_{i,j}]_{k\times k}}{B=[b_{i,j}]_{k\times k}}{C=[c_{i,j}]_{k\times k}}{D=[d_{i,j}]_{k\times k}}\in M_{2k}(\R)\text{ with }\\
		\ol{g}&=\mattwo{\ol{A}}{\ol{B}}{\ol{C}}{\ol{D}}\in SP_{2k}\bigg(\frac{\R}{\I}\bigg)\text{ and }a_{1,1}\text{ is a unit modulo }\I.}
	Consider the matrix $\Diag(\ol{a}^{-1}_{1,1},\ol{a}_{1,1},1,\ldots,1)\in SL_k\bigg(\frac{\R}{\I}\bigg)$. There exists $U\in SL_k(\R)$ such that 
	$\ol{U}=\Diag(\ol{a}^{-1}_{1,1},\ol{a}_{1,1},1,\ldots,1)$ because the reduction map $SL_k(\R)\lra SL_k\bigg(\frac{\R}{\I}\bigg)$ is surjective for 
	$k\in \mbb{N}$. Let $g_3=\mattwo{U}{0}{0}{(U^t)^{-1}}\in SP_{2k}(\R)$. We have $(gg_3)_{1,1}\equiv 1\mod \I$. So change it to $1$.
	\begin{center}
		\boxed{\text{Step III:Third Simplification $(a_{1,1},a_{1,2},\ldots,a_{1,k})=e_1^k=(a_{1,1},a_{2,1},\ldots,a_{k,1})$}}
	\end{center}
	Let \equa{g&=\mattwo{A=[a_{i,j}]_{k\times k}}{B=[b_{i,j}]_{k\times k}}{C=[c_{i,j}]_{k\times k}}{D=[d_{i,j}]_{k\times k}}\in M_{2k}(\R)\text{ with }\\
		\ol{g}&=\mattwo{\ol{A}}{\ol{B}}{\ol{C}}{\ol{D}}\in SP_{2k}\bigg(\frac{\R}{\I}\bigg)\text{ and }a_{1,1}=1.}
	Let $g_4=\mattwo{V}{0}{0}{(V^t)^{-1}}\in SP_{2k}(\R),g_5=\mattwo{W}{0}{0}{(W^t)^{-1}}\in SP_{2k}(\R)$ be such that 
	\equ{V=\mattwo{1}{-a_{1,2}\cdots-a_{1,k}}{0_{(k-1)\times 1}}{I_{(k-1)\times (k-1)}}\text{ and }W=\mattwo{1}{0}{\matcolthree{-a_{2,1}}{\vdots}{-a_{k,1}}}{I_{(k-1)\times(k-1)}}.}
	Then we have $(g_5gg_4)_{1,1}=1,(g_5gg_4)_{1,j}=0=(g_5gg_4)_{j,1},2\leq j\leq k$.
	\begin{center}
		\boxed{\text{Step IV:Fourth Simplification: The first row of $B$ is zero, }}
		\boxed{\text{The first column of $C$ is zero.}}
	\end{center}
	Let \equa{g&=\mattwo{A=[a_{i,j}]_{k\times k}}{B=[b_{i,j}]_{k\times k}}{C=[c_{i,j}]_{k\times k}}{D=[d_{i,j}]_{k\times k}}\in M_{2k}(\R)\text{ with }\\
		\ol{g}&=\mattwo{\ol{A}}{\ol{B}}{\ol{C}}{\ol{D}}\in SP_{2k}\bigg(\frac{\R}{\I}\bigg)\text{ and }a_{1,1}=1,a_{1,j}=a_{j,1}=0,2\leq j\leq k.} 
	Now right multiply $g$ by the following matrix $g_6\in SP_{2k}(\R)$ given by 
	\equ{g_6=\mattwo{I_{k\times k}}{\mattwo{-b_{1,1}}{-b_{1,2}\cdots-b_{1,k}}{\matcolthree{-b_{1,2}}{\vdots}{-b_{1,k}}}{0_{(k-1)\times(k-1)}}}{0_{k\times k}}{I_{k\times k}}.}
	Then left multiply by the following matrix $g_7$ given by 
	\equ{g_7=\mattwo{I_{k\times k}}{0_{k\times k}}{\mattwo{-c_{1,1}}{-c_{2,1}\cdots-c_{k,1}}{\matcolthree{-c_{2,1}}{\vdots}{-c_{k,1}}}{0_{(k-1)\times (k-1)}}}{I_{k\times k}}.}
	Then the matrix $g_7gg_6$ has the required properties.
	
	\begin{center}
		\boxed{\text{Step V: Fifth Simplification }}
		\boxed{\text{$a_{1,j}=0=a_{j,1},2\leq j\leq k,a_{2,j}=0=a_{j,2},3\leq j\leq k$}}
		\boxed{\text{$a_{1,1}=1,a_{2,2}$ is a unit modulo $\I$ and $a_{2,2}=1$ if $k\geq 3$}}
	\end{center}
	Let \equa{&g=\mattwo{A=[a_{i,j}]_{k\times k}}{B=[b_{i,j}]_{k\times k}}{C=[c_{i,j}]_{k\times k}}{D=[d_{i,j}]_{k\times k}}\in M_{2k}(\R)\\
		&\text{with }\ol{g}=\mattwo{\ol{A}}{\ol{B}}{\ol{C}}{\ol{D}}\in SP_{2k}\bigg(\frac{\R}{\I}\bigg)\text{ and }\\
		&a_{1,1}=1,a_{1,j}=a_{j,1}=0,2\leq j\leq k,b_{1,j}=0=c_{j,1},1\leq j\leq k.} 
	
	Now we repeat steps I,II,III for the second row and column of $A$ keeping the first row and column of $A$ intact. The second row of $g$ is given by $(0,a_{2,2},a_{2,3},\ldots,a_{2,k}$, $b_{2,1},b_{2,2},\ldots,b_{2,k})$ 
	which is unital modulo the ideal $\I$. Using Lemma~\ref{lemma:UnitalQuotientIdeal}, there exists $(0,1,e_{2,3},\ldots,e_{2,k}$, $f_{2,1},f_{2,2},\ldots,f_{2,k})\in \R^{2k}$ such that we have that
	\equ{t_2=a_{2,2}+\us{j=3}{\os{k}{\sum}}a_{2,j}e_{2,j}+\us{j=1}{\os{k}{\sum}}b_{2,j}f_{2,j}\text{ is a unit modulo }\I.}
	Now using Proposition~\ref{prop:UnitRowExt} there exists a matrix $g_8=\mattwo{E}{0_{k\times k}}{F}{(E^t)^{-1}}\in SP_{2k}(\R)$ such that the second column of $g_8$ is 
	\equ{(0,1,e_{2,3},\ldots,e_{2,k},f_{2,1},f_{2,2},\ldots,f_{2,k})^t.}
	Here in fact  $E=I_{2\times 2}$ if $k=2$ and if $k\geq 3$ then \equa{&E=\mattwo{\matthree{1}{0}{0}{0}{1}{0}{0}{e_{2,3}}{1}}{0_{3\times (k-3)}}{\matthree{0}{e_{2,4}}{0}{\vdots}{\vdots}{\vdots}{0}{e_{2,k}}{0}}{I_{(k-3)\times (k-3)}}\in SL_k(\R).\\
		& \text{If } F_1=\mattwo{\mattwo{0}{f_{2,1}}{f_{2,1}}{f_{2,2}-\us{j=3}{\os{k}{\sum}}e_{2,j}f_{2,j}}}{\mattwothree{0}{\cdots}{0}{f_{2,3}}{\cdots}{f_{2,k}}}{\matthreetwo{0}{f_{2,3}}{\vdots}{\vdots}{0}{f_{2,k}}}{0_{(k-2)\times(k-2)}}
		\text{ a symm. matrix},\\
		&\text{then }g_8=\mattwo{I_{k\times k}}{0_{k\times k}}{F_1}{I_{k\times k}}\mattwo{E}{0_{k\times k}}{0_{k\times k}}{(E^t)^{-1}}=\mattwo{E}{0_{k\times k}}{F_1E=F}{(E^t)^{-1}}.}
	The first column of $g_8$ is given by $(1,0,\ldots,0,0,f_{2,1},\ldots,0)^t=e_1^{2k}+f_{21}e^{2k}_{k+2}$. The second column of $g_8$ is given by $(0,1,e_{2,3},\ldots,e_{2,k},f_{2,1},f_{2,2},\ldots,f_{2,k})^t$.
	The first two rows of $g_8$ are given by $e_1^{2k},e_2^{2k}$. Now consider the matrix $gg_8$. The principal $2\times 2$ sub-matrix of $gg_8$ is given by \equ{\mattwo{1}{0}{a_{2,1}+b_{2,2}f_{2,1}}{t_2}.}
	We make $t_2=1$ if $k\geq 3$ by right multiplying $g$ with a matrix $\mattwo{X}{0}{0}{(X^t)^{-1}}$ which is congruent modulo $\I$ to
	\equ{\mattwo{\Diag(1,t_2^{-1},t_2,1,\ldots,1)}{0_{k\times k}}{0_{k\times k}}{\Diag(1,t_2,t_2^{-1},1,\ldots,1)}}
	where $t_2^{-1}\in \R,t_2^{-1}t_2\equiv 1\mod \I, X\in SL_k(\R)$.
	Otherwise we keep $t_2$ as it is. Now using elementary matrices in $SL_k(\R)$ and hence the corresponding matrices in $SP_{2k}(\R)$ we make the first two columns and first two rows other than 
	the diagonal elements of $A$ as zeroes modulo $\I$ and hence zeroes.
	
	\begin{center}
		\boxed{\text{Step VI: Sixth Simplification}}
		\boxed{\text{$a_{1,j}=0=a_{j,1},2\leq j\leq k,a_{2,j}=0=a_{j,2},3\leq j\leq k$}}
		\boxed{\text{$a_{1,1}=1,a_{2,2}$ is a unit modulo $\I$ and $a_{2,2}=1$ if $k\geq 3$}}
		\boxed{\text{$b_{1,j}=b_{2,j}=0=c_{j,1}=c_{j,2},1\leq j\leq k$}}
	\end{center}
	Let \equa{g&=\mattwo{A=[a_{i,j}]_{k\times k}}{B=[b_{i,j}]_{k\times k}}{C=[c_{i,j}]_{k\times k}}{D=[d_{i,j}]_{k\times k}}\in M_{2k}(\R)\text{ with }\\
		\ol{g}&=\mattwo{\ol{A}}{\ol{B}}{\ol{C}}{\ol{D}}\in SP_{2k}\bigg(\frac{\R}{\I}\bigg)\text{ such that}\\
		A&=\mattwo{\mattwo{1}{0}{0}{t_2}}{0_{2\times (k-2)}}{0_{(k-2)\times 2}}{\ti{A}_{(k-2)\times (k-2)}}.}
	Right multiply $g$ by 
	\equa{g_9&=\mattwo{I_{k\times k}}{\ti{B}}{0}{I_{k\times k}}\\&=
		\mattwo{I_{k\times k}}{\mattwo{\mattwo{-b_{1,1}}{-b_{1,2}}{-b_{1,2}}{-b_{2,2}t_2^{-1}}}{\mattwothree{-b_{1,3}}{\cdots}{-b_{1,k}}{-b_{2,3}t_2^{-1}}{\cdots}{-b_{2,k}t_2^{-1}}}
			{\matthreetwo{-b_{1,3}}{-b_{2,3}t_2^{-1}}{\vdots}{\vdots}{-b_{1,k}}{-b_{2,k}t_2^{-1}}}{0_{(k-2)\times(k-2)}}}{0}{I_{k\times k}}.}
	The matrix $\ti{B}$ is symmetric and more importantly we have \equ{b_{1,2}\equiv b_{2,1}t_2^{-1}\mod \I.}
	This follows because we have \equ{\mattwo{1}{0}{0}{t_2}\mattwo{b_{1,1}}{b_{2,1}}{b_{1,2}}{b_{2,2}}\equiv \mattwo{b_{1,1}}{b_{1,2}}{b_{2,1}}{b_{2,2}}\mattwo{1}{0}{0}{t_2}\mod \I.} This is because we have 
	\equ{AB^t\equiv BA^t \mod \I \text{ since }\ol{g}=\mattwo{\ol{A}}{\ol{B}}{\ol{C}}{\ol{D}}\in SP_{2k}\bigg(\frac{\R}{\I}\bigg).}
	The matrix $g_9\in SP_{2k}(\R)$. If we rewrite $gg_9=\mattwo{A}{B}{C}{D}$ then the first two row entries of $B$ are zero modulo $\I$ and hence can be replaced by zeroes. 
	We similarly now make the first two column entries of $C$ zeroes.
	\begin{center}
		\boxed{\text{Step VII: Seventh Simplification}}
		\boxed{\text{$A=[a_{i,j}]_{i,j=1,\ldots,k}$ is a diagonal matrix of the form}}
		\boxed{\text{$\Diag(1,1,1,\ldots,t)$ where $t$ is a unit modulo $\I$}}
	\end{center}
	We repeat the steps done for the second row till we reach the $k^{th}$-row and $k^{th}$-column of $A$ keeping the previous rows 
	and columns of $A$ intact after the simplification. 
	Then we get a matrix \equa{g&=\mattwo{\Diag(1,\ldots,1,t)}{B}{C}{D} \text{ such that }\\
		\ol{g}&=\mattwo{\Diag(1,\ldots,1,\ol{t})}{\ol{B}}{\ol{C}}{\ol{D}}\in SP_{2k}\bigg(\frac{\R}{\I}\bigg)}
	\begin{center}
		\boxed{\text{Step VIII: Eighth Simplification}} 
		\boxed{\text{$A=[a_{i,j}]_{i,j=1,\ldots,k}$ is a diagonal matrix of the form}}
		\boxed{\text{$\Diag(1,\ldots,1,t)$ where $t$ is a unit modulo $\I,B=0=C$}}
		\boxed{\text{$D=\Diag(1,\ldots,1,t^{-1})$ where $t^{-1}\in \R$ and $tt^{-1}\equiv 1\mod \I$}}
	\end{center}
	Here we make in a similar manner $B=0=C$. To make $B$ zero we multiply $g$ by $g_8$ where 
	\equa{g_8&=\mattwo{I}{B'}{0}{I}\\ &=
		\mattwo{I_{k\times k}}{\matfour {-b_{1,1}}{\cdots}{-b_{1,(k-1)}}{-b_{1,k}}{\vdots}{\ddots}{\vdots}{\vdots}{-b_{1,(k-1)}}{\cdots}{-b_{(k-1),(k-1)}}{-b_{(k-1),k}}{-b_{1,k}}{\cdots}{-b_{(k-1),k}}{-t^{-1}b_{k,k}}}
		{0_{k\times k}}{I_{k\times k}}\in SP_{2k}(\R).}
	Here the matrix $B'$ is symmetric.
	Now we observe that since \equ{\ol{g}=\mattwo{\Diag(1,\ldots,1,\ol{t})}{\ol{B}}{\ol{C}}{\ol{D}}\in SP_{2k}\bigg(\frac{\R}{\I}\bigg)} we have that $\Diag(1,\ldots,1,t)B^t\equiv B\Diag(1,\ldots,1,t)\mod \I$. Therefore 
	$b_{i,j}\equiv b_{j,i},1\leq i,j\leq k-1$ and $b_{i,k}\equiv t^{-1}b_{k,i}\mod \I, 1\leq i\leq k-1$. So the sub-matrix $B$ can be made zero. Similarly the matrix $C$ can be made zero.
	
	So the matrix $g$ reduces to 
	\equa{g&=\mattwo{\Diag(1,\ldots,1,t)}{0}{0}{D}\in M_{2k}(\R)\text{ with }\\
		\ol{g}&=\mattwo{\Diag(1,\ldots,1,\ol{t})}{0}{0}{\ol{D}}\in SP_{2k}\bigg(\frac{\R}{\I}\bigg)}
	So we get $\ol{D}=\Diag(1,\ldots,1,\ol{t}^{-1})$. Therefore $g=\Diag(1,\ldots,1,t,1,\ldots,1,t^{-1})$ where $t^{-1}\in \R$ and $t^{-1}t\equiv 1\mod \I$.
	We conclude that $\Det(\ol{g})=1\in \frac{\R}{\I}$ that is $SP_{2k}(\frac{\R}{\I})\subseteq SL_{2k}(\frac{\R}{\I})$. 
	Since $\ol{g}\in SL_{2k}(\frac{\R}{\I})$ we have that there exists a matrix $h\in SL_{2k}(\R)$ such that $\ol{h}=\ol{g}$.
	\begin{center}
		\boxed{\text{Step IX: Final Step}} 
	\end{center}
	Now consider the matrix $\mattwo {x}{y}{z}{w}\in SL_2(\R)$ such that $\mattwo {\ol{x}}{\ol{y}}{\ol{z}}{\ol{w}} \equiv \mattwo{\ol{t}}{0}{0}{\ol{t}^{-1}}\in SL_2\bigg(\frac{\R}{\I}\bigg)=SP_2\bigg(\frac{\R}{\I}\bigg)$.
	Then consider the following matrix 
	\equ{\mattwo{\Diag(1,\ldots,1,x)}{\Diag(0,\ldots,0,y)}{\Diag(0,\ldots,0,z)}{\Diag(1,\ldots,1,w)}.}
	This matrix is in $SP_{2k}(\R)$ and reduces to $\ol{g}$.
	This completes the proof of Theorem~\ref{theorem:SurjModIdealSP}.
\end{proof}

%%%%%%%%%%%%%%%%%%%%%%%%%%%%%%%%%%%%%%%%%%%%%%%%%%%%%%%%%%%%%%%%%%%%%%%%%%%%%%%%%%%%%%%%%
%%%%%%%%%%%%%%%%%%%%%%%%%%%%%%%%%%%%%%%%%%%%%%%%%%%%%%%%%%%%%%%%%%%%%%%%%%%%%%%%%%%%%%%%%
%%%%%%%%%%%%%%%%%%%%%%%%%%%%%%%%%%%%%%%%%%%%%%%%%%%%%%%%%%%%%%%%%%%%%%%%%%%%%%%%%%%%%%%%%

\section{\bf{On the Generalized Surjectivity Theorem for Generalized Projective Spaces For Special Linear Groups}}
In this section we prove two surjectivity theorems in the context of generalized projective spaces for special linear groups. More precisely we prove
following Theorem~\ref{theorem:GenSurjMainGenIdeals} and the third main Theorem~\ref{theorem:FullGenSurj}.

\begin{theorem}
\label{theorem:GenSurjMainGenIdeals}
Let $\R$ be a commutative ring with unity. 
Let $k\in \mbb{N}$ and $\mcl{I}_0,\mcl{I}_1,\ldots,\mcl{I}_k$ be $(k+1)$ co-maximal ideals in $\R$ such that either $\mcl{I}=\us{i=0}{\os{k}{\prod}}\mcl{I}_i$ satisfies the USC or $\mcl{I}=\R$. 
Let $A_{(k+1)\times (k+1)}=[a_{i,j}]_{0\leq i,j \leq k}\in M_{(k+1)\times (k+1)}(\R)$ be such that for every $0\leq i\leq k$ the $i^{th}\operatorname{-}$row is unital, that is, $\us{j=0}{\os{k}{\sum}} \langle a_{i,j}\rangle =\R$
for $0\leq i\leq k$. Then there exists $B=[a_{i,j}]_{0\leq i,j \leq k}\in SL_{k+1}(\R)$ such that we have 
$a_{i,j}\equiv b_{i,j}\mod \mcl{I}_i,0\leq i,j\leq k$.
\end{theorem}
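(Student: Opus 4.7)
The plan is to reduce the problem modulo $\mcl{I}$, split into factors via the Chinese Remainder Theorem, solve the problem separately in each factor, and then lift back to $\R$ via Theorem~\ref{theorem:SurjModIdeal}. The case $\mcl{I}=\R$ is immediate: mutual co-maximality together with $\us{i=0}{\os{k}{\prod}}\mcl{I}_i=\R$ forces each $\mcl{I}_i=\R$, so all required congruences are vacuous and $B=I_{k+1}$ works. Henceforth I assume $\mcl{I}\sbnq\R$ satisfies the USC. By Lemma~\ref{lemma:UnitalSuperIdeal} each $\mcl{I}_i\supseteq\mcl{I}$ also satisfies the USC, and hence by Lemma~\ref{lemma:UnitalQuotientIdeal} the zero ideal of $R_i\eqdef \R/\mcl{I}_i$ satisfies the USC.

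Since the $\mcl{I}_i$ are mutually co-maximal with $\mcl{I}=\us{i=0}{\os{k}{\prod}}\mcl{I}_i$, the classical Chinese Remainder Theorem yields $\R/\mcl{I}\cong \us{i=0}{\os{k}{\prod}}R_i$, hence $SL_{k+1}(\R/\mcl{I})\cong \us{i=0}{\os{k}{\prod}}SL_{k+1}(R_i)$. It thus suffices to construct, separately for each $i\in\{0,\ldots,k\}$, a matrix $\ol{B}^{(i)}\in SL_{k+1}(R_i)$ whose $i$-th row is the reduction $(\ol{a}_{i,0},\ldots,\ol{a}_{i,k})\in R_i^{k+1}$ of the $i$-th row of $A$; the other $k$ rows of $\ol{B}^{(i)}$ are unconstrained. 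Assembling these via the CRT isomorphism produces $\ol{B}\in SL_{k+1}(\R/\mcl{I})$, and since $\mcl{I}$ satisfies the USC, Theorem~\ref{theorem:SurjModIdeal} lifts $\ol{B}$ to some $B\in SL_{k+1}(\R)$ with $B\equiv \ol{B}\mod \mcl{I}$. Because $\mcl{I}\subseteq \mcl{I}_i$ for each $i$, the required congruences $b_{i,j}\equiv a_{i,j}\mod \mcl{I}_i$ then follow for all $0\leq i,j\leq k$.

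The substantive step is the construction of $\ol{B}^{(i)}$, i.e., extending a unimodular row in $R_i$ to an element of $SL_{k+1}(R_i)$. By hypothesis the $i$-th row of $A$ is unital in $\R$, so its reduction is unimodular in $R_i$. Applying the USC for the zero ideal of $R_i$ to the unital set $\{\ol{a}_{i,0},\ol{a}_{i,1},\ldots,\ol{a}_{i,k}\}$ produces elements $\ol{c}_1,\ldots,\ol{c}_k\in R_i$ such that $\ol{u}\eqdef \ol{a}_{i,0}+\us{j=1}{\os{k}{\sum}}\ol{c}_j\ol{a}_{i,j}$ is a unit of $R_i$. Let $E\in SL_{k+1}(R_i)$ be the elementary column-operation matrix that transforms $(\ol{a}_{i,0},\ldots,\ol{a}_{i,k})$ into $(\ol{u},\ol{a}_{i,1},\ldots,\ol{a}_{i,k})$. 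The modified row now has a genuine unit in its $0$-th slot, and I can then explicitly complete it to a matrix $M'\in SL_{k+1}(R_i)$ by placing standard basis vectors in the rows distinct from $i$ and using a $\Diag(\ol{u},\ol{u}^{-1},1,\ldots,1)$-style correction in one auxiliary row to force determinant~$1$, exactly in the spirit of the construction in Proposition~\ref{prop:UnitRowExt}. Then $\ol{B}^{(i)}\eqdef M'E^{-1}\in SL_{k+1}(R_i)$ has the prescribed $i$-th row.

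The principal obstacle I anticipate is precisely this row-extension: over a general commutative ring a unimodular row need not be completable to an invertible matrix, as shown by Example~\ref{Example:NotSurj}. It is the USC --- which converts a unimodular row into one containing a genuine unit via a single elementary column operation --- that unblocks the construction. Once this local-at-each-$R_i$ step is in place, the CRT decomposition and the strong-approximation lift supplied by Theorem~\ref{theorem:SurjModIdeal} are purely formal.
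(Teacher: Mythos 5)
Your proof is correct, and it takes a genuinely different route from the paper. The paper never passes to the product decomposition $SL_{k+1}(\R/\mcl{I})\cong\prod_i SL_{k+1}(\R/\mcl{I}_i)$; instead it works with a single matrix in $\R$ throughout, performing a simultaneous ``Gaussian-elimination'' normalization. Concretely, the paper uses Proposition~\ref{prop:bringunit} (a $\mcl{J}$-weighted refinement of the USC with $\mcl{J}=\mcl{I}_0\cdots\mcl{I}_{i-1}$) to make each $a_{i,i}$ a unit modulo $\mcl{I}_i$ while leaving rows $0,\ldots,i-1$ undisturbed modulo their respective ideals, then clears the off-diagonal with column operations landing in $\mcl{I}_i$, then invokes Proposition~\ref{prop:diagdetone} to massage the resulting diagonal into determinant $1$ modulo $\mcl{I}$, and only then applies Theorem~\ref{theorem:SurjModIdeal}. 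Your version decouples the problem from the start: reduce modulo $\mcl{I}$, split via CRT into independent subproblems in each $R_i=\R/\mcl{I}_i$, and in each factor solve the much simpler task of extending one unimodular row to an $SL_{k+1}(R_i)$ matrix (via USC for the zero ideal of $R_i$ and a trivial diagonal correction), then recombine and lift. This avoids the bookkeeping needed to protect earlier rows during the paper's iterative normalization, and it bypasses Propositions~\ref{prop:bringunit} and~\ref{prop:diagdetone} entirely — the only nontrivial ingredients you use are Lemmas~\ref{lemma:UnitalSuperIdeal},~\ref{lemma:UnitalQuotientIdeal}, and Theorem~\ref{theorem:SurjModIdeal}. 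What the paper's approach buys is a hands-on algorithm entirely internal to $\R$; what yours buys is conceptual transparency — it isolates exactly where the USC is used (completion of a unimodular row over the quotient) and makes the CRT and the strong-approximation lift visibly formal afterthoughts. Both arguments are sound.
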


%%%%%%%%%%%%%%%%%%%%%%%%%%%%%%%%%%%%%%%%%%%%%%%%%%%%%%%%%%%%%%%%%%%%%%%%%%%%%%%%%%%%%%%%%
%%%%%%%%%%%%%%%%%%%%%%%%%%%%%%%%%%%%%%%%%%%%%%%%%%%%%%%%%%%%%%%%%%%%%%%%%%%%%%%%%%%%%%%%%
%%%%%%%%%%%%%%%%%%%%%%%%%%%%%%%%%%%%%%%%%%%%%%%%%%%%%%%%%%%%%%%%%%%%%%%%%%%%%%%%%%%%%%%%%

Before proving these two theorems we need some results which we will state and prove. 

%%%%%%%%%%%%%%%%%%%%%%%%%%%%%%%%%%%%%%%%%%%%%%%%%%%%%%%%%%%%%%%%%%%%%%%%%%%%%%%%%%%%%%%%%
%%%%%%%%%%%%%%%%%%%%%%%%%%%%%%%%%%%%%%%%%%%%%%%%%%%%%%%%%%%%%%%%%%%%%%%%%%%%%%%%%%%%%%%%%
%%%%%%%%%%%%%%%%%%%%%%%%%%%%%%%%%%%%%%%%%%%%%%%%%%%%%%%%%%%%%%%%%%%%%%%%%%%%%%%%%%%%%%%%%

\begin{prop}
\label{prop:diagdetone}
Let $\R$ be a commutative ring with unity. Let $1<k\in \mbb{N}$ and $\mcl{I}_1,\mcl{I}_2,\ldots,\mcl{I}_k$ be
mutually co-maximal ideals in $\R$. Let $a_1,a_2,\ldots,a_k\in \R$ be such that for $1\leq i\leq k, a_i$ is a unit modulo $\mcl{I}_i$ if $\mcl{I}_i\neq \R$. 
Then there exist $d_i \equiv a_i \mod \mcl{I}_i, 1\leq i\leq k$ such that 
\equ{d_1d_2\ldots d_k \equiv 1 \mod \mcl{I}_1\mcl{I}_2\ldots \mcl{I}_k.} 
\end{prop}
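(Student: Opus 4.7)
The plan is to exploit the Chinese Remainder Theorem directly: since the $\mcl{I}_j$ are mutually co-maximal, we have the standard isomorphism $\R/(\mcl{I}_1\mcl{I}_2\cdots\mcl{I}_k) \cong \prod_{j=1}^k \R/\mcl{I}_j$, and in particular $\bigcap_j \mcl{I}_j = \prod_j \mcl{I}_j$. So verifying the target congruence modulo $\mcl{I}_1\mcl{I}_2\cdots\mcl{I}_k$ is equivalent to verifying it simultaneously modulo each $\mcl{I}_j$. Before proceeding I dispose of the indices with $\mcl{I}_i = \R$: for such an index the congruence $d_i \equiv a_i \mod \mcl{I}_i$ is vacuous, so I simply set $d_i = 1$, which removes the $i$-th factor from the product without changing the product ideal. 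So I may assume every $\mcl{I}_j$ is proper, whence every $a_j$ is a unit mod $\mcl{I}_j$ and admits an inverse $a_j^{-1}$ in $\R/\mcl{I}_j$.

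Now I construct the $d_i$'s by prescribing their residues modulo each $\mcl{I}_j$ and invoking CRT to lift to $\R$. Using cyclic bookkeeping, with the convention $\mcl{I}_0 = \mcl{I}_k$ and $a_0 = a_k$, I define
\[
d_i \equiv a_i \pmod{\mcl{I}_i},\qquad d_i \equiv a_{i-1}^{-1}\pmod{\mcl{I}_{i-1}},\qquad d_i \equiv 1 \pmod{\mcl{I}_j}\ \text{for all remaining } j.
\]
Each $d_i \in \R$ exists by the Chinese Remainder Theorem applied to the mutually co-maximal ideals $\mcl{I}_1,\ldots,\mcl{I}_k$, and the first prescription gives exactly $d_i \equiv a_i \mod \mcl{I}_i$, as required.

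Finally I verify that $d_1 d_2\cdots d_k\equiv 1\mod \mcl{I}_j$ for each $j$. Modulo $\mcl{I}_j$, the only non-trivial contributions to the product come from $d_j$ (contributing $a_j$) and $d_{j+1}$ (contributing $a_j^{-1}$, with the convention $d_{k+1} = d_1$, which contributes $a_k^{-1}$ modulo $\mcl{I}_k$); every other factor $d_i$ reduces to $1$ modulo $\mcl{I}_j$ by construction. Hence the product reduces to $a_j\cdot a_j^{-1}\cdot 1\cdots 1 = 1$ modulo each $\mcl{I}_j$, and therefore modulo $\bigcap_j\mcl{I}_j = \prod_j \mcl{I}_j$, as desired. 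I do not expect a genuine obstacle here — the argument is essentially a bookkeeping exercise inside the product ring $\prod_j\R/\mcl{I}_j$, and the unit hypothesis on each $a_i$ is used precisely to guarantee that the cancelling factor $a_j^{-1}$ is available inside $\R/\mcl{I}_j$.
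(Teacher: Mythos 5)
Your construction is sound and is essentially the paper's argument: both proofs build the $d_i$ by prescribing residues modulo each $\mcl{I}_j$ and lifting via CRT, arranging the inverses so that cancellation occurs modulo each $\mcl{I}_j$. You distribute the inverses cyclically ($d_{j+1}$ carries $a_j^{-1}$); the paper instead concentrates them, making $d_1$ carry $a_j^{-1}$ for every $j\geq 2$ and letting $d_2$ carry $a_1^{-1}$. For the case of at least two proper ideals these are just different bookkeeping conventions and both are correct.

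However, your preliminary reduction ``set $d_i=1$ whenever $\mcl{I}_i=\R$ and assume every remaining ideal is proper'' fails in the edge case where \emph{exactly one} $\mcl{I}_i$ is proper. If after discarding unit ideals you are left with a single proper ideal $\mcl{I}_1$, your cyclic prescription for $d_1$ becomes $d_1\equiv a_1\bmod\mcl{I}_1$ together with $d_1\equiv a_0^{-1}=a_1^{-1}\bmod\mcl{I}_0=\mcl{I}_1$, which is inconsistent unless $a_1^2\equiv 1$. Equivalently, with all the discarded $d_i$ fixed to $1$, the target congruence forces $d_1\equiv 1\bmod\mcl{I}_1$, clashing with $d_1\equiv a_1\bmod\mcl{I}_1$. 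The point is that the freedom in the discarded slots is genuinely needed: one must reserve one index $j$ with $\mcl{I}_j=\R$ to carry the inverse, setting $d_1=a_1$ and $d_j=z_1$ with $z_1 a_1\equiv 1\bmod\mcl{I}_1$, rather than $d_j=1$. The paper handles this subcase explicitly before discarding the remaining unit ideals and passing to the generic situation with at least two proper ideals. Your argument is otherwise correct; just add the case split for zero proper ideals (trivial), exactly one proper ideal (use one unit-ideal slot for the inverse), and at least two proper ideals (your cyclic CRT construction).
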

\begin{proof}
If all $\mcl{I}_i$ are unit ideals then we choose $d_i=1,1\leq i\leq k$. If one of them (say) $\mcl{I}_1\neq\R$ and $\mcl{I}_2=\mcl{I}_3=\ldots =\mcl{I}_k=\R$ then, let $z_1\in \R$ be such that $z_1a_1\equiv 1\mod \mcl{I}_1$.
Choose $d_1=a_1,d_2=z_1,d_3=\ldots =d_k=1$. Now we can assume that there are at least two ideals (say) $\mcl{I}_1\neq \R\neq \mcl{I}_2$. Here we choose $d_j=1$ if $\mcl{I}_j=\R$ for some $j>2$.
Hence we ignore unit ideals and assume that none of the ideals are unit ideals and $k\geq 2$. Let $b_i\in \R,1\leq i\leq k$ such that $b_ia_i\equiv 1\mod \mcl{I}_i$. 
Now we solve the following congruence equations for $d_1,d_2,\ldots,d_k$ using chinese remainder theorem for pairwise co-maximal ideals $\mcl{I}_i,1\leq i\leq k$.
\equa{&d_1\equiv a_1\mod \mcl{I}_1,d_1\equiv b_2 \mod \mcl{I}_2,d_1\equiv b_3\mod \mcl{I}_3,\ldots, d_1\equiv b_k\mod \mcl{I}_k\\
&d_2\equiv b_1\mod \mcl{I}_1,d_2\equiv a_2\mod \mcl{I}_2,d_2\equiv 1\mod \mcl{I}_3,\ldots,d_2\equiv 1\mod \mcl{I}_k\\
&\text{ for }3\leq i\leq k, d_i\equiv a_i\mod \mcl{I}_i,d_i\equiv 1\mod \mcl{I}_j\text{ for }j\neq i,1\leq j\leq k.}
So we have  $d_1d_2\ldots d_k \equiv 1 \mod \mcl{I}_1\mcl{I}_2\ldots \mcl{I}_k$ because 
$d_1d_2\ldots d_k \equiv 1 \mod \mcl{I}_i,1\leq i\leq k$.
\end{proof}

%%%%%%%%%%%%%%%%%%%%%%%%%%%%%%%%%%%%%%%%%%%%%%%%%%%%%%%%%%%%%%%%%%%%%%%%%%%%%%%%%%%%%%%%%
%%%%%%%%%%%%%%%%%%%%%%%%%%%%%%%%%%%%%%%%%%%%%%%%%%%%%%%%%%%%%%%%%%%%%%%%%%%%%%%%%%%%%%%%%
%%%%%%%%%%%%%%%%%%%%%%%%%%%%%%%%%%%%%%%%%%%%%%%%%%%%%%%%%%%%%%%%%%%%%%%%%%%%%%%%%%%%%%%%%

Now we state and prove a useful proposition.

%%%%%%%%%%%%%%%%%%%%%%%%%%%%%%%%%%%%%%%%%%%%%%%%%%%%%%%%%%%%%%%%%%%%%%%%%%%%%%%%%%%%%%%%%
%%%%%%%%%%%%%%%%%%%%%%%%%%%%%%%%%%%%%%%%%%%%%%%%%%%%%%%%%%%%%%%%%%%%%%%%%%%%%%%%%%%%%%%%%
%%%%%%%%%%%%%%%%%%%%%%%%%%%%%%%%%%%%%%%%%%%%%%%%%%%%%%%%%%%%%%%%%%%%%%%%%%%%%%%%%%%%%%%%%

\begin{prop}
\label{prop:bringunit}
Let $\R$ be a commutative ring with unity. Let $k\in \mbb{N}\cup\{0\}$ and $(a_0,a_1,\ldots,a_k)\in \R^{k+1}$ be a unital vector that is $\us{i=0}{\os{k}{\sum}}\langle a_i\rangle =\R$. 
Let $\mcl{I} \sbnq \R, \mcl{J} \subs \R$ be pairwise co-maximal ideals and $\mcl{I}$ satisfies the USC. For any subscript $0\leq i\leq k$ there exist 
\equ{x_0,x_1,\ldots, x_{i-1}\in \mcl{J} \text{ and }x_{i+1},\ldots,x_k\in \mcl{J}} such that the element 
\equ{\us{j=0}{\os{i-1}{\sum}}x_ja_j+a_i+\us{j=i+1}{\os{k}{\sum}}x_ja_j}
is a unit modulo the ideal $\mcl{I}$. 
\end{prop}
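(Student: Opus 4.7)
The plan is to reduce to the case $i=0$ by a harmless relabelling of the tuple, and then to combine two ingredients: the USC for $\mcl{I}$ (which gives the required unit after adjusting $a_0$ by \emph{some} element of $\langle a_1,\ldots,a_k\rangle$), and the co-maximality of $\mcl{I}$ and $\mcl{J}$ (which lets us force the coefficients of that adjustment to lie in $\mcl{J}$ without changing anything modulo $\mcl{I}$). The edge case $k=0$ is immediate, since then $(a_0)$ unital means $a_0$ is already a unit in $\R$, hence a unit modulo $\mcl{I}$.

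So assume $k\geq 1$ and, after permuting indices, $i=0$. First I would invoke Lemma~\ref{lemma:UnitalQuotientIdeal}: since $\mcl{I}$ satisfies the USC and $\{a_0,a_1,\ldots,a_k\}$ is unital in $\R$ (hence unital modulo $\mcl{I}$), there exists
\equ{b=\sum_{j=1}^{k}y_ja_j\in\langle a_1,\ldots,a_k\rangle\qquad(y_j\in\R)}
such that $a_0+b$ is a unit modulo $\mcl{I}$.

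Next I would use co-maximality: since $\mcl{I}+\mcl{J}=\R$, there exist $e\in\mcl{I}$ and $f\in\mcl{J}$ with $e+f=1$, so $f\equiv 1\pmod{\mcl{I}}$. Set $x_j\eqdef fy_j$ for $1\leq j\leq k$; then $x_j\in\mcl{J}$ because $f\in\mcl{J}$, and
\equ{a_0+\sum_{j=1}^{k}x_ja_j=a_0+f\sum_{j=1}^{k}y_ja_j=a_0+fb\equiv a_0+b\pmod{\mcl{I}},}
which is a unit modulo $\mcl{I}$. This gives the required coefficients for the case $i=0$, and relabelling produces the $x_j\in\mcl{J}$ for arbitrary $i$.

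I do not anticipate any serious obstacle: the USC does all the real work (producing the unit), and the co-maximality step is purely cosmetic, repairing the coefficients to lie in $\mcl{J}$. The only point that needs a moment's care is the trivial case $k=0$ (no sum on either side) and the boundary cases $i=0$ or $i=k$ where one of the two partial sums is empty; in each instance the argument above collapses correctly.
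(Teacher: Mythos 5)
Your proof is correct and takes essentially the same route as the paper: invoke the USC to find coefficients $y_j$ making $a_i+\sum_{j\neq i}y_ja_j$ a unit modulo $\mcl{I}$, then multiply by $f\in\mcl{J}$ from the decomposition $e+f=1$ ($e\in\mcl{I}$, $f\in\mcl{J}$) so that $x_j=fy_j\in\mcl{J}$ while $x_ja_j\equiv y_ja_j\pmod{\mcl{I}}$ because $f\equiv 1\pmod{\mcl{I}}$. The only cosmetic differences are that you reduce to $i=0$ by permutation and explicitly dispose of $k=0$ (where the USC, stated only for sets of size $\geq 2$, does not literally apply); the paper works with general $i$ and leaves $k=0$ implicit.
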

\begin{proof}
Let $q_1\in \mcl{I},q_2\in \mcl{J}$ be such that $q_1+q_2=1$. Since $\mcl{I}$ satisfies the USC, there exists $y_1,y_2,\ldots,y_{i-1},y_{i+1},\ldots,y_n\in \R$ such that $\us{j=1}{\os{i-1}{\sum}}y_ja_j+a_i+\us{j=i+1}{\os{n}{\sum}}y_ja_j$ is a unit modulo $\mcl{I}$. Now choose $x_j=q_2y_j,j\neq i, 1\leq j\leq k$. Then we have that 
\equ{\us{j=1}{\os{i-1}{\sum}}x_ja_j+a_i+\us{j=i+1}{\os{n}{\sum}}x_ja_j\equiv \us{j=1}{\os{i-1}{\sum}}y_ja_j+a_i+\us{j=i+1}{\os{n}{\sum}}y_ja_j \mod \mcl{I}.} Hence $\us{j=1}{\os{i-1}{\sum}}x_ja_j+a_i+\us{j=i+1}{\os{n}{\sum}}x_ja_j$ is a unit modulo $\mcl{I}$. 
This completes the proof of the proposition.
\end{proof}

%%%%%%%%%%%%%%%%%%%%%%%%%%%%%%%%%%%%%%%%%%%%%%%%%%%%%%%%%%%%%%%%%%%%%%%%%%%%%%%%%%%%%%%%%
%%%%%%%%%%%%%%%%%%%%%%%%%%%%%%%%%%%%%%%%%%%%%%%%%%%%%%%%%%%%%%%%%%%%%%%%%%%%%%%%%%%%%%%%%
%%%%%%%%%%%%%%%%%%%%%%%%%%%%%%%%%%%%%%%%%%%%%%%%%%%%%%%%%%%%%%%%%%%%%%%%%%%%%%%%%%%%%%%%%

Now we prove Theorem~\ref{theorem:GenSurjMainGenIdeals}.
\begin{proof}
First we make some observations.
\begin{enumerate}
\item The $i^{th}\operatorname{-}$row of $A_{(k+1)\times (k+1)}$ is unital if and only if the $i^{th}\operatorname{-}$row of \linebreak $A_{(k+1)\times (k+1)}.C$ is unital for any 
$C\in SL_{k+1}(\R)$.
\item The conclusion holds for the matrix $A_{(k+1)\times (k+1)}$ if and only if the conclusion holds for the matrix $A_{(k+1)\times (k+1)}.C$ for any $C\in SL_{k+1}(\R)$.
\item For any $0\leq i\leq k$, we can replace the $i^{th}\operatorname{-}$row $[a_{i,0}\ \ldots \ a_{i,k}]$ of $A_{(k+1)\times (k+1)}$ by any another unital row $[\ti{a}_{i,0}\ \ldots \ \ti{a}_{i,k}]$ such that $a_{i,j}\equiv \ti{a}_{i,j}\mod \mcl{I}_i, 0\leq j\leq k$.  
\end{enumerate}
If $\mcl{I}=R$ then $\mcl{I}_i=\R,0\leq i\leq k$ we can simply take $B$ as the identity matrix.
So we can assume that $\mcl{I}\sbnq \R$ and satisfies the USC. So if for any $0\leq i\leq k, \mcl{I}\subseteq\mcl{I}_i\sbnq \R$ then using Lemma~\ref{lemma:UnitalSuperIdeal}, $\mcl{I}_i$ also satisfies the USC.

We prove this theorem in several steps with the central idea being to transform $A_{(k+1)\times (k+1)}$ to another matrix for which the conclusion of the theorem holds.

\begin{enumerate}[label=Step(\Alph*):]
\item If $\mcl{I}_0\neq \R$ then $\mcl{I}_0$ satisfies the USC and we can make $a_{0,0}$ a unit modulo $\mcl{I}_0$ by applying a lower triangular $SL_{k+1}(\R)$ matrix with diagonal entries equal to one.
Let $z_{0,0}\in \R$ such that $z_{0,0}a_{0,0} \equiv 1 \mod \mcl{I}_0$.
Now we add $-z_{0,0}a_{0,j}$ times the $0^{th}\operatorname{-}$column to the $j^{th}\operatorname{-}$column for $1\leq j\leq k$. The $0^{th}\operatorname{-}$row becomes $[a_{0,0}\ \ldots \ a_{0,k}]$ with the following
properties.
\begin{itemize}
\item $a_{0,0}$ is a unit modulo $\mcl{I}_0$ if $\mcl{I}_0\neq \R$.
\item $a_{0,j}\in \mcl{I}_0$ for $1\leq j\leq k$.
\item $\us{j=0}{\os{k}{\sum}}(a_{0,j})=\R$, that is the $0^{th}\operatorname{-}$row is a unital vector.
\end{itemize}
In the subsequent steps we preserve these properties of the $0^{th}\operatorname{-}$row.
\item We inductively consider the $i^{th}\operatorname{-}$row of $A_{(k+1)\times (k+1)}$ for $i>0$.
Here if $\mcl{I}_i\neq \R$ and $a_{i,i}$ is not a unit modulo $\mcl{I}_i$ then $\mcl{I}_i$ satisfies the USC and we use Proposition~\ref{prop:bringunit} for the subscript $i$ and for the ideals $\mcl{I}=\mcl{I}_i$ and $\mcl{J}=\mcl{I}_0\mcl{I}_1\ldots\mcl{I}_{i-1}$ which is co-maximal with $\mcl{I}_i$ to make $a_{i,i}$ a unit modulo $\mcl{I}_i$. In this procedure the matrix $A$ will have the following properties.
\begin{itemize}
\item $a_{l,l}$ is a unit modulo $\mcl{I}_l$ for $0\leq l\leq i$ if $\mcl{I}_l\neq \R$.
\item $a_{l,j}\in \mcl{I}_j$ for $j\neq l, 0\leq j\leq k, 0\leq l\leq i$ when Proposition~\ref{prop:bringunit} is applied approriately, (that is, for the $i^{th}\operatorname{-}$row the subscript
$i$ is chosen, the ideal $\mcl{I}$ is chosen as $\mcl{I}_i$ and the ideal $\mcl{J}$ is chosen as $\mcl{I}_0\mcl{I}_1\ldots\mcl{I}_{i-1}$.
\item All rows of $A_{(k+1)\times (k+1)}$ are unital. 
\end{itemize}
\item We continue this procedure till the last $k^{th}\operatorname{-}$row.
After this procedure we have the following properties for $A_{(k+1)\times (k+1)}$.
\begin{itemize}
\item The diagonal entry $a_{i,i}$ is a unit modulo $\mcl{I}_i$ if $\mcl{I}_i\neq \R$ for $0\leq i\leq k$.
\item The non-diagonal entry $a_{i,j}\in \mcl{I}_i$ for $0\leq i\neq j\leq k$.
\item All rows of $A_{(k+1)\times (k+1)}$ are unital. 
\end{itemize}
\item Consider only the diagonal part of the matrix $A_{(k+1)\times (k+1)}$, that is, the matrix $D_{(k+1)\times (k+1)}=diag(a_{0,0},a_{1,1},\ldots,a_{k,k})$. 
We use Proposition~\ref{prop:diagdetone} to change the diagonal matrix to $D_{(k+1)\times (k+1)}$ to $\ti{D}_{(k+1)\times (k+1)}=diag(d_0,d_1,$ $\ldots,d_k)$ such that we have 
$d_i\equiv a_{i,i}\mod \mcl{I}_i,0\leq i\leq k$ and $d_0d_1\ldots d_k\equiv 1\mod \mcl{I}_0\mcl{I}_1\ldots\mcl{I}_k$. The ideal $\mcl{I}_0\mcl{I}_1\ldots\mcl{I}_k \neq \R$ by assumption and satisfies the USC. Hence 
\equ{\ol{\ti{D}}_{(k+1)\times (k+1)} \in SL_{k+1}\bigg(\frac{\R}{\mcl{I}_0\mcl{I}_1\ldots \mcl{I}_k}\bigg).}
and by an application of Theorem~\ref{theorem:SurjModIdeal} we conclude that the reduction map 
\equ{SL_{k+1}(\R) \lra SL_{k+1}\bigg(\frac{\R}{\mcl{I}_0\mcl{I}_1\ldots \mcl{I}_k}\bigg)} is surjective. Therefore there exists a matrix $B=[b_{i,j}]_{0\leq i,j\leq k}\in SL_{k+1}(\R)$ such that we have 
\equ{\ol{B}=\ol{\ti{D}}_{(k+1)\times (k+1)} \in SL_{k+1}\bigg(\frac{\R}{\mcl{I}_0\mcl{I}_1\ldots \mcl{I}_k}\bigg).}
\item This matrix $B$ is a required matrix. We observe the following. 
\begin{itemize}
\item The diagonal entries $b_{i,i}\equiv d_i\equiv a_{i,i} \mod \mcl{I}_i$.
\item The non-diagonal entries $b_{i,j}\in \mcl{I}_i$ and hence $b_{i,j}\equiv a_{i,j}$ for $0\leq i\neq j\leq k$.
\end{itemize}
\end{enumerate}
We have completed the proof of the theorem in Steps $(A)-(E)$.
\end{proof}

%%%%%%%%%%%%%%%%%%%%%%%%%%%%%%%%%%%%%%%%%%%%%%%%%%%%%%%%%%%%%%%%%%%%%%%%%%%%%%%%%%%%%%%%%
%%%%%%%%%%%%%%%%%%%%%%%%%%%%%%%%%%%%%%%%%%%%%%%%%%%%%%%%%%%%%%%%%%%%%%%%%%%%%%%%%%%%%%%%%
%%%%%%%%%%%%%%%%%%%%%%%%%%%%%%%%%%%%%%%%%%%%%%%%%%%%%%%%%%%%%%%%%%%%%%%%%%%%%%%%%%%%%%%%%

Now we prove the third main Theorem~\ref{theorem:FullGenSurj}.
\begin{proof}
Let \equa{\big([a_{0,0}:a_{0,1}:\ldots: a_{0,k}],[a_{1,0}:a_{1,1}:\ldots: a_{1,k}],\ldots,&[a_{k,0}:a_{k,1}:\ldots: a_{k,k}]\big)\\&\in \us{i=0}{\os{k}{\prod}}\mbb{PF}^{k,(m^i_0,m^i_1,\ldots,m^i_k)}_{\I_i}.}
Consider $A_{(k+1)\times (k+1)}=[a_{i,j}]_{0\leq i,j\leq k}\in M_{(k+1)\times (k+1)}(\R)$ for which Theorem~\ref{theorem:GenSurjMainGenIdeals} can be applied. Therefore we get 
$B=[b_{i,j}]_{0\leq i,j\leq k} \in SL_{k+1}(\R)$ such that $b_{i,j}\equiv a_{i,j} \mod \mcl{I}_i,0\leq i,j\leq k$. Hence we get 
\equ{[b_{i,0}:b_{i,1}:\ldots: b_{i,k}]=[a_{i,0}:a_{i,1}:\ldots: a_{i,k}]\in \mbb{PF}^{k,(m^i_0,m^i_1,\ldots,m^i_k)}_{\I_i},0\leq i\leq k.}
This proves the third main Theorem~\ref{theorem:FullGenSurj}.
\end{proof}
%%%%%%%%%%%%%%%%%%%%%%%%%%%%%%%%%%%%%%%%%%%%%%%%%%%%%%%%%%%%%%%%%%%%%%%%%%%%%%%%%%%%%%%%%
%%%%%%%%%%%%%%%%%%%%%%%%%%%%%%%%%%%%%%%%%%%%%%%%%%%%%%%%%%%%%%%%%%%%%%%%%%%%%%%%%%%%%%%%%
%%%%%%%%%%%%%%%%%%%%%%%%%%%%%%%%%%%%%%%%%%%%%%%%%%%%%%%%%%%%%%%%%%%%%%%%%%%%%%%%%%%%%%%%%

\begin{remark}
Having proved Theorem~\ref{theorem:FullGenSurj}, we have generalized Theorem $1.8$ on Page $339$ of C.~P.~Anil~Kumar~\cite{MR3887364} for ordinary projective spaces by removing the Dedekind type domain condition on the ring.
On the other hand we have no conditions on the ring in Theorem~\ref{theorem:FullGenSurj}, in the context of all generalized projective spaces, except that the product ideal $\mcl{I}$ of the co-maximal ideals $\mcl{I}_i,0\leq i\leq k$ must satisfy the USC, if $\mcl{I}$ is a proper ideal. 
\end{remark}

%%%%%%%%%%%%%%%%%%%%%%%%%%%%%%%%%%%%%%%%%%%%%%%%%%%%%%%%%%%%%%%%%%%%%%%%%%%%%%%%%%%%%%%%%
%%%%%%%%%%%%%%%%%%%%%%%%%%%%%%%%%%%%%%%%%%%%%%%%%%%%%%%%%%%%%%%%%%%%%%%%%%%%%%%%%%%%%%%%%
%%%%%%%%%%%%%%%%%%%%%%%%%%%%%%%%%%%%%%%%%%%%%%%%%%%%%%%%%%%%%%%%%%%%%%%%%%%%%%%%%%%%%%%%%

\section{\bf{On the Generalized Surjectivity Theorem for Generalized Projective Spaces For Symplectic Linear Groups}}
In this section we prove two surjectivity theorems in the context of generalized projective spaces for symplectic linear groups. More precisely we prove
Theorem~\ref{theorem:GenSurjMain} and fourth main Theorem~\ref{theorem:FullGenSurjOne}.

\begin{theorem}
	\label{theorem:GenSurjMain}
	Let $\R$ be a commutative ring with unity. 
	Let $k\in \mbb{N}$ and $\mcl{I}_1,\mcl{I}_1,\ldots,\mcl{I}_{2k}$ be $2k$ co-maximal ideals in $\R$ such that either $\mcl{I}=\us{i=1}{\os{2k}{\prod}}\mcl{I}_i$ satisfies the USC or $\mcl{I}=\R$.
	Let $M_{(2k)\times (2k)}=[m_{i,j}]_{1\leq i,j \leq 2k}\in M_{(2k)\times (2k)}(\R)$ such that for every $1\leq i\leq 2k$ the $i^{th}\operatorname{-}$row is unital, that is, $\us{j=1}{\os{2k}{\sum}} \langle m_{i,j}\rangle =\R$
	for $1\leq i\leq 2k$. Then there exists $N=[n_{i,j}]_{1\leq i,j \leq 2k}\in SP_{2k}(\R)$ such that we have 
	$m_{i,j}\equiv n_{i,j}\mod \mcl{I}_i,1\leq i,j\leq 2k$.
\end{theorem}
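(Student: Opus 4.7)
The case $\mcl{I}=\R$ is immediate (take $N=I_{2k}$), so assume $\mcl{I}\sbnq\R$ satisfies the USC. By Lemma~\ref{lemma:UnitalSuperIdeal}, every proper $\mcl{I}_i$ satisfies the USC, and then by Lemma~\ref{lemma:UnitalQuotientIdeal} the zero ideal in $\R_i:=\R/\mcl{I}_i$ satisfies the USC. The strategy is to first assemble an element $\ol{N}\in SP_{2k}(\R/\mcl{I})$ satisfying the prescribed row conditions modulo each $\mcl{I}_i$, and then lift it to $N\in SP_{2k}(\R)$ using the strong approximation result Theorem~\ref{theorem:SurjModIdealSP}.

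Since the $\mcl{I}_i$ are pairwise co-maximal and the matrix $J$ has integer entries, the usual Chinese Remainder Theorem yields a group isomorphism $SP_{2k}(\R/\mcl{I})\cong\prod_{i=1}^{2k}SP_{2k}(\R_i)$ under which an element $\ol{N}$ corresponds to a tuple $(\ol{N}_1,\ldots,\ol{N}_{2k})$, and the $i^{th}$-row of $\ol{N}$ reduced modulo $\mcl{I}_i$ equals the $i^{th}$-row of $\ol{N}_i$. It therefore suffices, for each $i$ with $\mcl{I}_i\neq\R$, to produce $\ol{N}_i\in SP_{2k}(\R_i)$ whose $i^{th}$-row is $\ol{r}_i:=(\ol{m}_{i,1},\ldots,\ol{m}_{i,2k})$, the reduction of the $i^{th}$-row of $M$ modulo $\mcl{I}_i$.

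Fix such an $i$. The row $\ol{r}_i$ is unital in $\R_i^{2k}$, so applying the USC for the zero ideal of $\R_i$ with position $i$ singled out, I obtain $b_j\in\R_i$ (for $j\neq i$) such that $u:=\ol{m}_{i,i}+\sum_{j\neq i}b_j\ol{m}_{i,j}$ is a unit in $\R_i$. Setting $b_i:=1$, the column $(b_1,\ldots,b_{2k})^t$ has a unit in position $i$, and the column case of Proposition~\ref{prop:UnitRowExt} produces $C_i\in SP_{2k}(\R_i)$ whose $i^{th}$-column is this vector. Consequently the $i^{th}$-entry of the row $\ol{r}_i\cdot C_i$ equals $u$, a unit; a second application of Proposition~\ref{prop:UnitRowExt}, this time in the row case, yields $\ti{M}_i\in SP_{2k}(\R_i)$ whose $i^{th}$-row is $\ol{r}_i\cdot C_i$. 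Then $\ol{N}_i:=\ti{M}_i\,C_i^{-1}\in SP_{2k}(\R_i)$, and its $i^{th}$-row is $(\ol{r}_i C_i)C_i^{-1}=\ol{r}_i$, as required.

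Reassembling the $\ol{N}_i$ via CRT gives the desired $\ol{N}\in SP_{2k}(\R/\mcl{I})$, which Theorem~\ref{theorem:SurjModIdealSP} lifts to some $N\in SP_{2k}(\R)$. Because $\mcl{I}\sbq\mcl{I}_i$, the reduction $N\bmod\mcl{I}_i$ agrees with $\ol{N}_i$; in particular $n_{i,j}\equiv m_{i,j}\mod\mcl{I}_i$ for every $j$. The main work is concentrated in the local extension step over $\R_i$, where the USC must be leveraged to manufacture a unit at position $i$ through a symplectic column operation so that Proposition~\ref{prop:UnitRowExt} becomes applicable; once this has been achieved, the CRT packaging and the lift supplied by Theorem~\ref{theorem:SurjModIdealSP} are essentially formal.
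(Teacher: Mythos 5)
Your proof is correct, but it takes a genuinely different and noticeably leaner route than the paper.

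The paper proves Theorem~\ref{theorem:GenSurjMain} by working entirely inside $\R$: it subjects $M$ to a long sequence of $SP_{2k}(\R)$ column operations (its Steps~(A)--(L)), carefully tracking the congruence conditions modulo \emph{all} the $\mcl{I}_i$ simultaneously. That bookkeeping requires Proposition~\ref{prop:bringunit} to manufacture units in diagonal positions while staying inside products of the other ideals, Proposition~\ref{prop:diagdetone} to adjust a diagonal to have determinant one modulo $\mcl{I}$, the $SL$-analogue Theorem~\ref{theorem:GenSurjMainGenIdeals} as a subroutine (Step~(J)), and the strong approximation Theorem~\ref{theorem:SurjModIdeal} for $SL$.

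You instead observe at the outset that since the $\mcl{I}_i$ are mutually co-maximal and $SP_{2k}$ is defined by polynomial equations with integer coefficients, the functor $SP_{2k}$ commutes with the Chinese Remainder isomorphism, so $SP_{2k}(\R/\mcl{I})\cong\prod_i SP_{2k}(\R/\mcl{I}_i)$. This decouples the $2k$ congruence constraints: for each $i$ you only need a matrix over $\R/\mcl{I}_i$ with prescribed unimodular $i$-th row, and the constraints on the other rows have simply disappeared. The local problem is then solved cleanly by a conjugation trick: Lemma~\ref{lemma:UnitalQuotientIdeal} gives the USC for the zero ideal of $\R/\mcl{I}_i$, which produces a symplectic column operation $C_i$ (via the column case of Proposition~\ref{prop:UnitRowExt}, the unit being the forced $b_i=1$ at position $i$) putting a unit at entry $i$ of the row; the row case of Proposition~\ref{prop:UnitRowExt} then extends, and right-multiplying by $C_i^{-1}$ restores the original row. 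Finally the reassembled element of $SP_{2k}(\R/\mcl{I})$ is lifted by Theorem~\ref{theorem:SurjModIdealSP}. This entirely bypasses Propositions~\ref{prop:bringunit} and~\ref{prop:diagdetone} and the $SL$ version Theorem~\ref{theorem:GenSurjMainGenIdeals}, and it isolates the one place where USC is used (to create a unit in a prescribed position over the local ring). Both approaches ultimately rest on a strong approximation theorem, but yours uses the symplectic version $\Gl$ directly and makes the CRT/lift structure of the argument transparent, at the small cost of passing to the quotient rings $\R/\mcl{I}_i$ rather than doing everything over $\R$.
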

\begin{proof}
First we make the following observations.
	\begin{enumerate}
		\item The $i^{th}\operatorname{-}$row of $M_{(2k)\times (2k)}$ is unital if and only if the $i^{th}\operatorname{-}$row of \linebreak $M_{(2k)\times (2k)}.X$ is for any 
		$X\in SP_{2k}(\R)$.
		\item The conclusion holds for the matrix $M_{(2k)\times (2k)}$ if and only if the conclusion holds for the matrix $M_{(2k)\times (2k)}.X$ for any $X\in SP_{2k}(\R)$.
		\item For any $1\leq i\leq 2k$, we can replace the $i^{th}\operatorname{-}$row $[m_{i,1}\ \ldots \ m_{i,2k}]$ of $M_{(2k)\times (2k)}$ by another unital row $[\ti{m}_{i,1}\ \ldots \ \ti{m}_{i,2k}]$ 
		such that $m_{i,j}\equiv \ti{m}_{i,j}\mod \mcl{I}_i, 1\leq j\leq 2k$.  
	\end{enumerate}
If $\mcl{I}=R$ then $\mcl{I}_i=\R,1\leq i\leq 2k$ we can simply take $N$ as the identity matrix. So we can assume that $\mcl{I}\sbnq \R$ and satisfies the USC. So if for any $1\leq i\leq 2k, \mcl{I}\subseteq\mcl{I}_i\sbnq \R$ then using Lemma~\ref{lemma:UnitalSuperIdeal}, $\mcl{I}_i$ also satisfies the USC.

We prove this theorem in several steps with the central idea being to transform $M_{(2k)\times (2k)}=[m_{i,j}]_{1\leq i,j\leq 2k}=\mattwo {A=[a_{i,j}]_{1\leq i,j\leq k}}{B=[b_{i,j}]_{1\leq i,j\leq k}}{C=[c_{i,j}]_{1\leq i,j\leq k}}{D=[d_{i,j}]_{1\leq i,j\leq k}}$ to another matrix for which the conclusion of the theorem holds.

\begin{enumerate}[label=Step(\Alph*):]
\item If $\us{i=1}{\os{k}{\prod}}\mcl{I}_i=\R$ then we replace $A$ by the identity matrix $I_{k\times k}$ and go to Step(H). Hence we assume that $\us{i=1}{\os{k}{\prod}}\mcl{I}_i\neq \R$.
\item If $\mcl{I}\subseteq \mcl{I}_1\sbnq \R$ then using Lemma~\ref{lemma:UnitalSuperIdeal}, we have that $\mcl{I}_1$ satisfies the USC. So we can make $m_{1,1}$ a unit modulo $\mcl{I}_1$ by applying an $SP_{2k}(\R)$ transformation as follows. If $t_1=a_{1,1}+\us{j=2}{\os{k}{\sum}}a_{1,j}e_{1,j}+\us{j=1}{\os{k}{\sum}}b_{1,j}f_{1,j}$ is a unit modulo $\I_1$ then using Proposition~\ref{prop:UnitRowExt} there exists a matrix 
$g_2=\mattwo{E}{0}{F}{(E^t)^{-1}}\in SP_{2k}(\R)$ such that the first column of $g_2$ is $(1,e_{1,2},\ldots,e_{1,k},f_{1,1},\ldots,f_{1,k})^t$. Here 
\equ{E=\mattwo{\mattwo{1}{0}{e_{1,2}}{1}}{0_{2\times (k-2)}}{\matthreetwo{e_{1,3}}{0}{\vdots}{\vdots}{e_{1,k}}{0}}{I_{(k-2)\times (k-2)}}\in SL_k(\R).}
The matrix $Mg_2$ has the property that $(Mg_2)_{11}=t_1$ is a unit modulo $\I_1$. So we assume that $m_{1,1}$ is a unit modulo $\I_1$.

Let $z_{1,1}\in \R$ be such that $z_{1,1}m_{1,1} \equiv 1 \mod \mcl{I}_1$.
Now we add $-z_{1,1}m_{1,j}$ times the $1^{th}\operatorname{-}$column to the $j^{th}\operatorname{-}$column for $1\leq j\leq k$ for the submatrix $A$ of $M$. The matrices used in these transformations are of the form 
\equ{\mattwo{U}{0_{k\times k}}{0_{k\times k}}{(U^t)^{-1}}, U\in SL_k(\R).}
The $1^{th}\operatorname{-}$row of $A$ becomes $[a_{1,1}\ \ldots \ a_{1,k}]$ with the following
properties.
\begin{itemize}
	\item $a_{1,1}=m_{1,1}$ is a unit modulo $\mcl{I}_1$ if $\mcl{I}_1\neq \R$.
	\item $a_{1,j}=m_{1,j}\in \mcl{I}_1$ for $2\leq j\leq k$.
	\item $\us{j=1}{\os{2k}{\sum}}(m_{1,j})=\R$, that is the $1^{th}\operatorname{-}$row of $M$ is a unital vector. In fact all rows of $M$ are unital.
\end{itemize} 
\item We inductively consider the $i^{th}\operatorname{-}$row of $M_{(2k)\times (2k)}$ for $1< i\leq k$.
Here if $\mcl{I}_i\neq \R$ and $a_{i,i}$ is not a unit modulo $\mcl{I}_i$ then we use Proposition~\ref{prop:bringunit} for the subscript $i$ and for the ideals $\mcl{I}=\mcl{I}_i$ and $\mcl{J}=\mcl{I}_1\ldots\mcl{I}_{i-1}$ which is co-maximal with $\mcl{I}_i$ to make $a_{i,i}$ a unit modulo $\mcl{I}_i$. 

In this procedure we get a unit $t_i$ modulo $\mcl{I}_i$ in the $(i,i)^{th}$-position as 
\equ{t_i=\us{j=1}{\os{i-1}{\sum}}e_{i,j}a_{i,j}+a_{i,i}+\us{j=i+1}{\os{k}{\sum}}e_{i,j}a_{i,j}+\us{j=1}{\os{k}{\sum}}f_{i,j}b_{i,j}} where $e_{i,j}\in \mcl{I}_1\ldots\mcl{I}_{i-1}$ for $j\neq i,1\leq j\leq k, f_{i,j}\in \mcl{I}_1\ldots\mcl{I}_{i-1}$ for $1\leq j\leq k$. 

Using Proposition~\ref{prop:UnitRowExt} there exists a matrix $g_3=\mattwo{E}{0}{F}{(E^t)^{-1}}\in SP_{2k}(\R)$ such that the $i^{th}$ column of $g_3$ is 
\equ{(e_{i,1},\ldots,e_{i,(i-1)},1,e_{i,(i+1)},\ldots,e_{i,k},f_{i,1},\ldots,f_{i,k})^t.} Here 
\equ{E=\mattwo{I_{(i-1)\times(i-1)}}{\matthreetwo{e_{i,1}}{0_{1\times (k-i)}}{\vdots}{\vdots}{e_{i,(i-1)}}{0_{1\times (k-i)}}}{0_{(k-i+1)\times (i-1)}}{\mattwo{1}{0_{1\times (k-i)}}
			{\matcolthree{e_{i,i+1}}{\vdots}{e_{i,k}}}{I_{(k-i)\times (k-i)}}}\in SL_k(\R)}
and the matrix entries in $F$ are contained in the ideal $\langle f_{i,1},\ldots,f_{i,k}\rangle \subseteq \mcl{I}_1\mcl{I}_2\ldots\mcl{I}_{i-1}$.

The matrix $Mg_3$ has the property that $(Mg_3)_{i,i}=t_i$ is a unit modulo $\I_i$. So we assume that $m_{i,i}=a_{i,i}$ is a unit modulo $\I_i$.
Now the matrix $A$ will have the following properties.
\begin{itemize}
	\item $a_{l,l}$ is a unit modulo $\mcl{I}_l$ for $0\leq l\leq i$ if $\mcl{I}_l\neq \R$.
	\item $a_{l,j}\in \mcl{I}_j$ for $j\neq l, 0\leq j\leq k, 0\leq l\leq (i-1)$ when Proposition~\ref{prop:bringunit} is applied appropriately, (that is, for the $i^{th}\operatorname{-}$row the subscript
	$i$ is chosen, the ideal $\mcl{I}=\mcl{I}_i$ is chosen, the ideal $\mcl{J}=\mcl{I}_1\ldots\mcl{I}_{i-1}$ is chosen in Proposition~\ref{prop:bringunit} and using the fact that the ideal 
	$\langle f_{i,1},\ldots,f_{i,k}\rangle \subseteq \mcl{I}_1\mcl{I}_2\ldots\mcl{I}_{i-1}$.
	\item All rows of $M_{(2k)\times (2k)}$ are unital. 
\end{itemize}

Let $z_{i,i}\in \R$ such that $z_{i,i}m_{i,i} \equiv 1 \mod \mcl{I}_i$.
Now we add $-z_{i,i}m_{i,j}$ times the $i^{th}\operatorname{-}$column to the $j^{th}\operatorname{-}$column for $1\leq j\leq k$ for the submatrix $A$ of $M$. The matrices used in these transformations are of the form 
\equ{\mattwo{U}{0_{k\times k}}{0_{k\times k}}{(U^t)^{-1}}, U\in SL_k(\R).}
Now the matrix $A$ will have the following properties.
\begin{itemize}
	\item $a_{l,l}$ is a unit modulo $\mcl{I}_l$ for $0\leq l\leq i$ if $\mcl{I}_l\neq \R$.
	\item $a_{l,j}\in \mcl{I}_j$ for $j\neq l, 0\leq j\leq k, 0\leq l\leq i$. Also when $l=i$.
	\item All rows of $M_{(2k)\times (2k)}$ are unital. 
\end{itemize}
\item We continue this procedure till the $k^{th}\operatorname{-}$row.
After this procedure we have the following properties for $A$.
\begin{itemize}
	\item The diagonal entry $a_{i,i}$ is a unit modulo $\mcl{I}_i$ if $\mcl{I}_i\neq \R$ for $1\leq i\leq k$.
	\item The non-diagonal entry $a_{i,j}\in \mcl{I}_i$ for $1\leq i\neq j\leq k$.
	\item All rows of $M_{2k\times 2k}$ are unital. 
\end{itemize}
\item Consider only the diagonal part of the matrix $A$, that is, the matrix $D_{k\times k}=\Diag(a_{1,1},\ldots,a_{k,k})$. 
We use Proposition~\ref{prop:diagdetone} to change the diagonal matrix to $D_{k\times k}$ to $\ti{D}_{k\times k}=\Diag(d_1,\ldots,d_k)$ such that we have 
$d_i\equiv a_{i,i}\mod \mcl{I}_i,1\leq i\leq k$ and $d_1\ldots d_k\equiv 1\mod \mcl{I}_1\ldots\mcl{I}_k$. The ideal $\mcl{I}_1\ldots\mcl{I}_k \neq \R$ by assumption. Hence 
\equ{\ol{\ti{D}}_{k\times k} \in SL_{k}\bigg(\frac{\R}{\mcl{I}_1\ldots \mcl{I}_k}\bigg).}

Since $\mcl{I}\subseteq\mcl{I}_1\ldots \mcl{I}_k\sbnq \R$,  the ideal $\mcl{I}_1\ldots \mcl{I}_k$ satisfies the USC. Hence by an application of the surjectivity theorem for the map $SL_k(\R) \lra SL_{k}\bigg(\frac{\R}{\mcl{I}_1\ldots\mcl{I}_k}\bigg)$
we conclude that there exists a matrix $\ti{A}=[\ti{a}_{i,j}]_{1\leq i,j\leq k}\in SL_{k}(\R)$ such that we have 
\equ{\ol{\ti{A}}=\ol{\ti{D}}_{k\times k} \in SL_{k}\bigg(\frac{\R}{\mcl{I}_1\ldots \mcl{I}_k}\bigg).}

\item We observe the following. 
\begin{itemize}
	\item The diagonal entries $\ti{a}_{i,i}\equiv d_i\equiv a_{i,i} \mod \mcl{I}_i$.
	\item The non-diagonal entries $\ti{a}_{i,j}\in \mcl{I}_i$ and hence $\ti{a}_{i,j}\equiv a_{i,j}$ for $0\leq i\neq j\leq k$.
\end{itemize}
So we replace the sub-matrix $A$ of $M$ by $\ti{A}$ with all the rows of $M$ being unital and hence we could as well assume that $M=\mattwo{A}{B}{C}{D}$ with $A\in SL_k(\R)$.

\item Now we multiply $M$ by $\mattwo{A^{-1}}{0}{0}{A^t}$ so the matrix $M$ becomes $\mattwo IBCD$ for some $B,C,D\in M_{k\times k}(\R)$ with all rows of $M$ being unital.

\item Now we make the matrix $B$ symmetric. For $1\leq i\neq j\leq k$ we have $\mcl{I}_i+\mcl{I}_j=\R$. So let $q_{i,j}\in \mcl{I}_i,q_{j,i}\in \mcl{I}_j$ be such that $q_{i,j}+q_{j,i}=1$. Now we solve for $b'_{i,j},b'_{j,i}\in \R$ such that $b_{i,j}+q_{i,j}b'_{i,j}=b_{j,i}+q_{j,i}b'_{j,i}$ for $1\leq i\neq j\leq k$. Replace the $(i,j)^{th},(j,i)^{th}$ entries of $B$ by $b_{i,j}+q_{i,j}b'_{i,j}=b_{j,i}+q_{j,i}b'_{j,i}$. The rows of $M$ are still unital after this replacement and now $B$ is symmetric.

\item Then we multiply $M$ by $\mattwo{I}{-B}{0}{I}\in SP_{2k}(\R)$ and so that $M=\mattwo I0CD$ for some $C,D\in M_{k\times k}(\R)$ and rows of $M$ being unital.

\item We have $M=\mattwo I0CD$. Now we transform $M$ as $M=\mattwo {U}{0}{C}{I}$ with $U\in SL_k(\R),C$ symmetric, $I\in SL_k(\R)$ is the identity matrix. If $\mcl{I}_{k+1}\ldots\mcl{I}_{2k}=\R$ then this step is easy. So we assume that $\mcl{I}_{k+1}\ldots\mcl{I}_{2k}\neq \R$.
For any $1\leq i\leq k$, we successively make $c_{i,i}$ a unit modulo $\mcl{I}_{k+i}$ if $\mcl{I}_{k+i} \neq \R$ as follows. Since $\mcl{I}_{k+i}$ satisfies the USC, let 
\equ{t_{k+i}=\us{j=1}{\os{i-1}{\sum}}c_{i,j}e_{i,j}+c_{i,i}+\us{j=i+1}{\os{k}{\sum}}c_{i,j}e_{i,j}+\us{j=1}{\os{k}{\sum}}d_{i,j}f_{i,j}} be a unit modulo $\mcl{I}_{k+i}$ with 
$e_{i,j}\in \mcl{I}_{k+1}\ldots\mcl{I}_{k+i-1},j\neq i, 1\leq j\leq k,f_{i,j}\in \mcl{I}_{k+1}\ldots\mcl{I}_{k+i-1},1\leq j\leq k$ using an application of Proposition~\ref{prop:bringunit} appropriately.
Then multiply $M$ by the matrix $\mattwo E0F{(E^{t})^{-1}}$ whose $i^{th}$-column is  $(e_{i,1},\ldots,e_{i,(i-1)},1,e_{i,(i+1)},\ldots,e_{i,k},f_{i,1},\ldots,f_{i,k})^t$ where 
\equ{E=\mattwo{I_{(i-1)\times (i-1)}}{\matthreetwo{e_{i,1}}{0_{1\times (k-i)}}{\vdots}{\vdots}{e_{i,(i-1)}}{0_{1\times (k-i)}}}{0_{(k-i+1)\times (i-1)}}
	{\mattwo{1}{0_{1\times (k-i)}}{\matcolthree{e_{i,(i+1)}}{\vdots}{e_{i,k}}}{I_{(k-i)\times (k-i)}}}\in SL_{k}(\R)}
and the matrix entries of $F$ are in the ideal $\langle f_{i,1},f_{i,2},\ldots,f_{i,k}\rangle \subseteq \mcl{I}_{k+1}\ldots\mcl{I}_{k+i-1}$. 

Since $\langle c_{i,i}\rangle +\mcl{I}_{k+i}=\R$ and $\mcl{I}_1\ldots\mcl{I}_k+\mcl{I}_{k+i}=\R$, we have $c_{i,i}\mcl{I}_1\ldots\mcl{I}_k+\mcl{I}_{k+i}=\R$. Hence there exists $s_{i,i}\in\mcl{I}_1\ldots\mcl{I}_k$ such that $c_{i,i}s_{i,i}+d_{i,i}\equiv 1\mod \mcl{I}_{k+i}$ for $1\leq i\leq k$. Now we multiply $M=\mattwo U0CD$ by a matrix of the form 
\equ{\mattwo I{S=\Diag(s_{1,1},s_{2,2},\ldots,s_{k,k})}{0}{I}} to obtain $\mattwo {U}{US}{C}{CS+D}$. Then replace $US$ by zero and rows of $M$ are still unital. We have $M$ is of the form $\mattwo {U}{0}{C}{D}$ such that $d_{i,i}\equiv 1\mod \mcl{I}_{k+i}$, $1\leq i\leq k$. Replace $d_{i,i}$ by $1$ and now rows of $D$ are unital. 

Now for $k>1$ using Theorem~\ref{theorem:GenSurjMainGenIdeals} there exists a matrix $V\in SL_k(\R)$ such that $v_{i,j}\equiv d_{i,j}\mod \mcl{I}_{k+i}$ for $1\leq i,j\leq k$. Now replace $D$ by $V$ in $M$ and we have that the rows of $M$ are unital. Then right multiply $M$ by $\mattwo {V^t}00{V^{-1}}$ to obtain a matrix of the form $\mattwo {U}{0}{C}{I}$ with $U\in SL_k(\R)$.

Now we make the submatrix $C$ symmetric in $M=\mattwo UOCI$ similar to Step (H). 

\item Then right multiply $M=\mattwo U0CI$ by $\mattwo I0{-C}I$ to obtain a matrix of the form $\mattwo U00I$.  

\item Now we transform $M$ into identity matrix. Let $V\in SL_k(\R)$ such that $V\equiv U^{-1} \mod \mcl{I}_1\ldots\mcl{I}_k$ and $V\equiv I \mod \mcl{I}_{k+1}\ldots\mcl{I}_{2k}$. Such a matrix $V$ exists because the map
\equa{SL_k(\R)&\lra SL_k\bigg(\frac{\R}{\mcl{I}_1\ldots\mcl{I}_{2k}}\bigg)\\ &\lra SL_k\bigg(\frac{\R}{\mcl{I}_1\ldots\mcl{I}_{k}}\bigg)\oplus SL_k\bigg(\frac{\R}{\mcl{I}_{k+1}\ldots\mcl{I}_{2k}}\bigg)}
is surjective. Right multiply $M$ by $\mattwo{V}{0}{0}{(V^t)^{-1}}$ and replace $M$ by identity matrix in $SP_{2k}(\R)$.
\end{enumerate}
This completes the proof of Theorem~\ref{theorem:GenSurjMain}.
\end{proof}
Now we prove the fourth main Theorem~\ref{theorem:FullGenSurjOne} of the article.
\begin{proof}[Proof of Theorem~\ref{theorem:FullGenSurjOne}]
	Let \equa{\big([a_{1,1}:a_{1,2}:\ldots: a_{1,2k}],[a_{2,1}:a_{2,2}:\ldots: a_{2,2k}],\ldots,&[a_{2k,1}:a_{2k,2}:\ldots: a_{2k,2k}]\big)\\&\in \us{i=1}{\os{2k}{\prod}}\mbb{PF}^{2k-1,(m^i_1,m^i_2,\ldots,m^i_{2k})}_{\I_i}.}
	Consider $A_{2k\times 2k}=[a_{i,j}]_{1\leq i,j\leq 2k}\in M_{2k\times 2k}(\R)$ for which Theorem~\ref{theorem:GenSurjMain} can be applied. Therefore we get 
	$B=[b_{i,j}]_{1\leq i,j\leq 2k} \in SP_{2k}(\R)$ such that $b_{i,j}\equiv a_{i,j} \mod \mcl{I}_i,1\leq i,j\leq 2k$. Hence we get for $1\leq i\leq 2k$,
	\equ{[b_{i,1}:b_{i,2}:\ldots: b_{i,2k}]=[a_{i,1}:a_{i,2}:\ldots: a_{i,2k}]\in \mbb{PF}^{2k-1,(m^i_1,m^i_2,\ldots,m^i_{2k})}_{\I_i}.}
	This proves main Theorem~\ref{theorem:FullGenSurjOne}.
\end{proof}
%%%%%%%%%%%%%%%%%%%%%%%%%%%%%%%%%%%%%%%%%%%%%%%%%%%%%%%%%%%%%%%%%%%%%%%%%%%%%%%%%%%%%%%%%
%%%%%%%%%%%%%%%%%%%%%%%%%%%%%%%%%%%%%%%%%%%%%%%%%%%%%%%%%%%%%%%%%%%%%%%%%%%%%%%%%%%%%%%%%
%%%%%%%%%%%%%%%%%%%%%%%%%%%%%%%%%%%%%%%%%%%%%%%%%%%%%%%%%%%%%%%%%%%%%%%%%%%%%%%%%%%%%%%%%
We end this section with a final remark.
\begin{remark}
We have seen in Theorem~\ref{theorem:GenCRTSURJ} that the map in Question~\ref{ques:GenCRTSURJ} is indeed bijective if the either ideal $\mcl{I}$ satisfies the USC or $\mcl{I}=\R$ with no conditions on the commutative ring $\R$ with unity and for all values $m_j\in \N,0\leq j\leq l$. The answer to Question~\ref{ques:GenCRTSURJ} in a greater generality is not known.

We have seen in Theorem~\ref{theorem:FullGenSurj} that the map in Question~\ref{ques:ProjHighDim} is indeed surjective if the product ideal $\mcl{I}=\us{i=0}{\os{k}{\prod}}\mcl{I}_i$ satisfies the USC or if $\mcl{I}=\R$. There are no conditions required for the type of ring and the Theorem~\ref{theorem:FullGenSurj}  holds for all values $m^j_i\in \N, 0\leq i,j\leq k$.
The answer to Question~\ref{ques:ProjHighDim} in a greater generality is not known. As stated in Question~\ref{ques:ProjHighDim}, the map is not surjective because of Example~\ref{Example:NotSurj}. 

Again if the product ideal $\mcl{I}=\us{i=1}{\os{2k}{\prod}}\mcl{I}_i$ satisfies the USC or if $\mcl{I}=\R$ then using Theorem~\ref{theorem:FullGenSurjOne} , we conclude surjectivity of the map in Question~\ref{ques:ProjHighDimOne} with no conditions on the ring $\R$ and for all values $m_j^i\in \N,1\leq i,j\leq 2k$. The answer to Question~\ref{ques:ProjHighDimOne} in a greater generality is not known.
\end{remark}
%%%%%%%%%%%%%%%%%%%%%%%%%%%%%%%%%%%%%%%%%%%%%%%%%%%%%%%%%%%%%%%%%%%%%%%%%%%%%%%%%%%%%%%%%
%%%%%%%%%%%%%%%%%%%%%%%%%%%%%%%%%%%%%%%%%%%%%%%%%%%%%%%%%%%%%%%%%%%%%%%%%%%%%%%%%%%%%%%%%
%%%%%%%%%%%%%%%%%%%%%%%%%%%%%%%%%%%%%%%%%%%%%%%%%%%%%%%%%%%%%%%%%%%%%%%%%%%%%%%%%%%%%%%%%

\section{\bf{The Classical Groups $O_n(\Z), O_{(p,q)}(\Z),UU_n(\Z)$ over Integers}}
Here we give examples of certain classical groups over the integers where the surjectivity fails. 
Let $n\in \mbb{N}$ and \equ{O_n(\Z)=\{P\in M_n(\Z)\mid P^tP=PP^t=I_{n\times n}\}.}
Then it is a finite group consisting of $n\times n$ monomial matrices with $\pm 1$ non-zero entries of determinant $\pm 1$. Hence it is immediate that surjectivity fails.
For $n\in \N$ and \equ{UU_n(\Z)=\{P\in SL_n(\Z)\mid P \text{ is a unipotent upper triangular matrix}\}} again the surjectivity fails. Now we prove a theorem.
\begin{theorem}
	Let $p,q\in \mbb{N}, J=\mattwo{I_{p\times p}}{0_{p\times q}}{0_{q \times p}}{-I_{q\times q}}$
	and \equ{O_{(p,q)}(\Z)=\{A\in M_{p+q}(\Z)\mid A^tJA=J\}.}
	Let $r_i,1\leq i\leq (p+q)$ be distinct primes. Then the maps \equ{O_{p+q}(\Z) \lra \us{i=1}{\os{p+q}{\prod}}\mbb{PF}^{p+q-1}_{r_i}} given by 
	\equ{A=[a_{i,j}]_{i,j=1,\ldots, p+q}\lmt \big([a_{1,1}:\ldots:a_{1,(p+q)}],\ldots,[a_{(p+q),1}:\ldots:a_{(p+q),(p+q)}]\big)}
	or 
	\equ{A=[a_{i,j}]_{i,j=1,\ldots, p+q}\lmt \big([a_{1,1}:\ldots:a_{(p+q),1}],\ldots,[a_{1,(p+q)}:\ldots:a_{(p+q),(p+q)}]\big)}
	are both not surjective.
\end{theorem}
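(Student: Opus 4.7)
The plan is to exhibit, for each of the two maps, a single tuple in the target that cannot lie in the image. The key observation is that the defining relation $A^tJA=J$ for $A\in O_{(p,q)}(\Z)$, together with its companion $AJA^t=J$ (which follows since $J^2=I$), imposes a quadratic constraint on every row and every column. Writing $R_i$ for the $i^{th}\operatorname{-}$row and $C_j$ for the $j^{th}\operatorname{-}$column of $A$, the diagonal entries of these two matrix identities give
\equ{Q(R_i):=\us{k=1}{\os{p}{\sum}}a_{i,k}^2-\us{k=p+1}{\os{p+q}{\sum}}a_{i,k}^2=J_{ii}\in\{\pm 1\},\quad Q(C_j)=J_{jj}\in\{\pm 1\}.}

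Next, I analyze the behavior of $Q$ on projective classes modulo a prime $r$. In $\mbb{PF}^{p+q-1}_r$, which carries the standard weights $(1,1,\ldots,1)$, the equivalence is $v\sim \gl v$ for a unit $\gl\mod r$, and under this action $Q(\gl v)\equiv \gl^2 Q(v)\mod r$. Hence the property $Q(v)\equiv 0\mod r$ is a well-defined attribute of the projective class $[v]\in\mbb{PF}^{p+q-1}_r$. Since $Q(R_i),Q(C_j)\in\{\pm 1\}$ and $\pm 1\not\equiv 0\mod r$ for every prime $r$ (including $r=2$, where $\pm 1\equiv 1$), any row-reduction $\ol{R_i}\mod r_i$ or column-reduction $\ol{C_j}\mod r_j$ must represent a \emph{non-isotropic} class. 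Consequently, an isotropic class in $\mbb{PF}^{p+q-1}_{r_i}$ cannot occur at the $i^{th}\operatorname{-}$coordinate of the image of either map.

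Finally, I will produce an explicit isotropic witness. Since $p,q\geq 1$, the integer vector $v=(1,0,\ldots,0,1,0,\ldots,0)\in\Z^{p+q}$, with the two $1$'s in positions $1$ and $p+1$, satisfies $Q(v)=1-1=0$ in $\Z$; moreover, $v$ is unital modulo every prime because its first coordinate equals $1$. Writing $[v]_{r}$ for the resulting class in $\mbb{PF}^{p+q-1}_r$, the tuple
\equ{\big([v]_{r_1},[v]_{r_2},\ldots,[v]_{r_{p+q}}\big)\in \us{i=1}{\os{p+q}{\prod}}\mbb{PF}^{p+q-1}_{r_i}}
has an isotropic coordinate at every index and hence lies outside the image of the row map as well as the column map, proving the theorem. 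There is no substantive obstacle in the argument; the only point requiring a moment's care is the prime $r=2$, and that case is resolved immediately by the identity $\pm 1\equiv 1\not\equiv 0\mod 2$, so the same isotropic witness $v=(1,0,\ldots,0,1,0,\ldots,0)$ serves uniformly for all the primes $r_i$.
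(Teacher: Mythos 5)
Your proof is correct and is of the same general type as the paper's, namely a quadratic-form obstruction based on the fact that the row/column form $Q$ of any $A\in O_{(p,q)}(\Z)$ evaluates to $\pm 1$ while $Q$ scales by $\gl^2$ on a projective class, so the residue of $Q$ up to unit squares is a well-defined invariant of the class. The difference is in the choice of obstruction: the paper fixes a non-square $\gl\in\mbb{F}_{r_1}^{*}$ and selects a class with $Q\equiv\gl$, deriving the contradiction $\gm^2=\gl$; you instead select the isotropic class $[1:0:\cdots:0:1:0:\cdots:0]$ with $Q\equiv 0$, and the contradiction is that $\pm\gm^2\not\equiv 0$. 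Your version is slightly more robust: it works uniformly for every prime $r_i$, including $r_i=2$, whereas the paper's argument as literally written requires $\mbb{F}_{r_1}^{*}$ to contain a non-square and hence silently assumes $r_1\neq 2$ (a harmless omission since one could simply use another index $r_j\neq 2$, but a gap nonetheless). You also state the companion identity $AJA^t=J$ explicitly, which cleanly justifies the column constraints that the paper lists without derivation.
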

\begin{proof}
	If $A^tJA=J$ then we have $\Det(A)=\pm 1$ and $(A^{-1})^tJA^{-1}=J$ and moreover $A^{-1}=JA^tJ$. So if $A=[a_{i,j}]_{i,j=1,\ldots, p+q}\in O_{(p,q)}(\Z)$ then we have 
	\begin{itemize}
		\item for each $1\leq i\leq p, \us{j=1}{\os{p}{\sum}}a_{i,j}^2-\us{j=p+1}{\os{p+q}{\sum}}a_{i,j}^2=1$.
		\item for each $p+1\leq i\leq p+q, \us{j=1}{\os{p}{\sum}}a_{i,j}^2-\us{j=p+1}{\os{p+q}{\sum}}a_{i,j}^2=-1$.
		\item for each $1\leq j\leq p, \us{i=1}{\os{p}{\sum}}a_{i,j}^2-\us{i=p+1}{\os{p+q}{\sum}}a_{i,j}^2=1$.
		\item for each $p+1\leq j\leq p+q, \us{i=1}{\os{p}{\sum}}a_{i,j}^2-\us{i=p+1}{\os{p+q}{\sum}}a_{i,j}^2=-1$.
	\end{itemize}
	Now consider a non-square $\gl\in \mbb{F}_{r_1},\square\neq \gl$.
	Let $[a_1:a_2:\ldots:a_{p+q}]\in \mbb{PF}^{p+q-1}_{r_1}$ such that $a_1^2+\ldots +a_p^2-a_{p+1}^2-\ldots-a_{p+q}^2=\gl$. Then there does not exist $A=[a_{i,j}]_{i,j=1,\ldots, p+q}\in O_{(p,q)}(\Z)$ such that 
	$[a_{1,1}:\ldots:a_{1,(p+q)}]=[a_1:a_2:\ldots:a_{p+q}]$. If it exists then there exists $\gm\in \mbb{F}^{*}_{r_1}$ such that $\gm a_{1,i}=a_i,1\leq i\leq (p+q) \Ra \gm^2=\gl$ which is a contradiction.
	Hence both the row and column maps fail to be surjective.
\end{proof}
%%%%%%%%%%%%%%%%%%%%%%%%%%%%%%%%%%%%%%%%%%%%%%%%%%%%%%%%%%%%%%%%%%%%%%%%%%%%%%%%%%%%%%%%%
%%%%%%%%%%%%%%%%%%%%%%%%%%%%%%%%%%%%%%%%%%%%%%%%%%%%%%%%%%%%%%%%%%%%%%%%%%%%%%%%%%%%%%%%%
%%%%%%%%%%%%%%%%%%%%%%%%%%%%%%%%%%%%%%%%%%%%%%%%%%%%%%%%%%%%%%%%%%%%%%%%%%%%%%%%%%%%%%%%%

\section{\bf{Appendix}}
In this section we prove certain facts about zariski density of integral points and (usual/zariski) topological properties of certain complex affine algebraic sets.
But first we prove a lemma.
\begin{lemma}
	\label{lemma:ZD}
	Let $X$ be an affine or projective algebraic set over an infinite field $\mbb{F}$. 
	Let $A\subs X$. Let $C\subs X$ be an irreducible curve. If $A\cap C$ is infinite then the zariski closure $\ol{A}$ in the affine or projective space of the set $A$ contains the curve $C$. 
\end{lemma}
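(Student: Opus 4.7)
The plan is to exploit the fact that an irreducible curve, being a one-dimensional irreducible variety, has a particularly simple Zariski topology: its proper closed subsets are exactly the finite subsets. Once this is in hand, the lemma follows by intersecting $C$ with $\ol{A}$ and using that the intersection is closed in $C$ and contains the infinite set $A\cap C$.

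First I would set up the main inclusion. Let $\ol{A}$ denote the Zariski closure of $A$ in the ambient affine or projective space containing $X$. Since $C\sbq X$ is Zariski closed in that ambient space, the intersection $Z=\ol{A}\cap C$ is Zariski closed in $C$ with respect to the subspace topology. By definition of closure we have $A\cap C\sbq Z$, so $Z$ contains the infinite set $A\cap C$.

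Next I would invoke the dimension/irreducibility step. Because $C$ is an irreducible algebraic set of dimension one, any proper Zariski-closed subset $W\sbnq C$ decomposes into finitely many irreducible components, each of which is an irreducible closed subset of $C$ of dimension strictly less than $1$, hence dimension $0$, i.e.\ a single point. Therefore $W$ is a finite set. Consequently, any Zariski-closed subset of $C$ is either all of $C$ or finite. Applied to $Z$, which is closed in $C$ and contains the infinite set $A\cap C$, this forces $Z=C$, i.e.\ $C\sbq \ol{A}$, as required.

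The only mildly delicate point is justifying that proper closed subsets of an irreducible one-dimensional algebraic set are finite; this is standard but depends on $C$ being irreducible (otherwise a union of curves could have infinite proper closed subsets consisting of whole components). The hypothesis of irreducibility in the statement is used exactly here. Note also that the infinite cardinality of $A\cap C$, rather than any denseness assumption on $A$, is what drives the argument, and the hypothesis that $\mbb{F}$ is infinite is compatible with (though not strictly needed for) the existence of infinite subsets of $C$.
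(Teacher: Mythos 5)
Your argument is correct and is essentially the same as the paper's: both hinge on the observation that an irreducible curve carries the cofinite Zariski topology, so the closed set $\ol{A}\cap C$, being infinite, must equal $C$. You simply spell out the dimension-theoretic justification for the cofinite topology, which the paper takes for granted.
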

\begin{proof}
	The curve $C$ has co-finite topology. If $D$ is zariski closed and $D\supseteq A$ then $D\supseteq (A\cap C)$. Hence $D\supseteq C$. This proves the lemma. 
\end{proof}

\subsection{\bf{Equation $x^2+y^2-z^2=1$: Zariski Density}}

Here in this section we prove zariski density of integral points $X(\Z)$ in $X(\mbb{C})$ where $X(\Z)$ and $X(\mbb{C})$ are as stated in the lemma below.
\begin{lemma}
	Let $X(\mbb{C})=\{(x,y,z)\in \mbb{C}^3\mid x^2+y^2-z^2=1\}$ and $X(\Z)=\{(x,y,z)\in \Z^3\mid x^2+y^2-z^2=1\}$. Then $X(\Z)$ is zariski dense in $X(\mbb{C})$.
\end{lemma}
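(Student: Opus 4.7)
The plan is to apply Lemma~\ref{lemma:ZD} to an infinite family of irreducible curves on $X$, each carrying infinitely many integer points. I will first verify that $X(\mbb{C})$ is irreducible of dimension two: the polynomial $x^2 + y^2 - z^2 - 1$ is irreducible in $\mbb{C}[x,y,z]$ since its leading quadratic part $x^2 + y^2 - z^2$ is a nondegenerate ternary quadratic form and hence does not factor as a product of two linear forms, so the whole affine polynomial cannot factor into two linear forms either.

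For every nonzero integer $m$, I will consider the plane section $C_m = X(\mbb{C}) \cap \{x - z = m\}$. Substituting $x = z + m$ into the defining equation of $X$ yields $y^2 + 2mz = 1 - m^2$, which is (the graph of) a parabola in the $(y,z)$-plane; hence $C_m$ is an irreducible curve on $X$. Since the parallel planes $\{x - z = m\}$ are pairwise disjoint in $\mbb{C}^3$, the curves $\{C_m : m \in \Z \setminus \{0\}\}$ are pairwise distinct.

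To see that each $C_m(\Z)$ is infinite, I will take $y_j = (1 - m) + 2mj$ for $j \in \Z$ and solve $y_j^2 + 2m z_j = 1 - m^2$ for $z_j$; a short computation gives $z_j = 1 - m - 2j + 2mj(1 - j) \in \Z$, and then $x_j = z_j + m \in \Z$. As $j$ varies the $y$-coordinates $y_j$ are pairwise distinct, producing infinitely many integer points on $C_m$. Applying Lemma~\ref{lemma:ZD} with $A = X(\Z)$ and $C = C_m$, the Zariski closure $\ol{X(\Z)}$ then contains $C_m$ for every $m \in \Z \setminus \{0\}$.

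To conclude, suppose for contradiction that $\ol{X(\Z)} \subsetneq X(\mbb{C})$; as a proper Zariski closed subset of the irreducible surface $X(\mbb{C})$ it would have dimension at most one and would therefore be a union of finitely many irreducible curves together with finitely many isolated points. Each $C_m$, being itself irreducible of dimension one, would have to coincide with one of those finitely many component curves, contradicting the existence of infinitely many pairwise distinct $C_m$. Hence $\ol{X(\Z)} = X(\mbb{C})$. The only nonroutine step will be the explicit Diophantine construction producing the infinite family of integer points on each slice $C_m$; everything else follows formally from Lemma~\ref{lemma:ZD} and an elementary dimension count.
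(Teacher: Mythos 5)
Your proof is correct, and it takes a genuinely different route from the paper's. The paper first applies the linear change of variables $\psi\colon (x,y,z)\mapsto\left(\begin{smallmatrix}z+x&y\\ y&z-x\end{smallmatrix}\right)$, transporting the problem to the variety $S=\{ac=b^2-1\}$, and then shows that the subset $T$ of integer points with $b$ even is Zariski dense in $S$ by applying Lemma~\ref{lemma:ZD} in two directions: first to parabolic slices (one coordinate among $a,c$ frozen at an odd integer) to get fibres over odd integers into $\ol{T}$, then to hyperbolic slices $\{ac=b_0^2-1,\ b=b_0\}$ to sweep out all of $S$. You instead work directly on $X$, slice by the pencil of parallel planes $x-z=m$ for $m\in\Z\setminus\{0\}$, exhibit an explicit two-parameter family of integer points $\bigl(x_j,y_j,z_j\bigr)$ on each parabolic slice $C_m$, invoke Lemma~\ref{lemma:ZD} once per slice, and then finish with a component-counting argument: a proper closed subset of the irreducible surface $X(\mbb{C})$ could contain only finitely many irreducible curves, yet $\ol{X(\Z)}$ contains the infinitely many pairwise disjoint curves $C_m$. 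What this buys you is a shorter, more self-contained argument — no change of variables, a single explicit Diophantine parametrization (I checked it: $z_j=(1-m)-2j+2mj(1-j)$ is exactly $\frac{1-m^2-y_j^2}{2m}$ with $y_j=(1-m)+2mj$, and $y_j^2\equiv(1-m)^2\equiv 1+m^2\equiv 1-m^2\pmod{2m}$ guarantees integrality), and a clean dimension-count to replace the second application of Lemma~\ref{lemma:ZD}. The paper's version has the virtue of making the transition to the symmetric-matrix picture explicit, which is consonant with the paper's broader theme, but as a proof of this lemma alone yours is tighter.
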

\begin{proof}
	First we observe that the set $T=\bigg\{\mattwo abbc\in M_2(\Z) \mid ac=b^2-1, b \text{ even }\bigg\}$ is zariski dense in the set of symmetric matrices 
	$S=\bigg\{\mattwo abbc \in M_2(\mbb{C})\mid ac=b^2-1\bigg\}$. Let $a$ be an odd integer. For any odd integer $d$ the point $(d(cd+2),cd+1)$ satisfies the equation $xc=y^2-1$ and $cd+1$ is an even integer. Hence 
	$\ol{T}\supseteq \{\mattwo abbc \in M_2(\mbb{C})\mid ac=b^2-1,c \text{ an odd integer}\}$ using Lemma~\ref{lemma:ZD}.
	Similarly $\ol{T}\supseteq \{\mattwo abbc \in M_2(\mbb{C})\mid ac=b^2-1,a \text{ an odd integer}\}$.
	Let $\mattwo {a_0}{b_0}{b_0}{c_0}\in M_2(\mbb{C})$ such that $a_0c_0=b_0^2-1,b_0\neq \pm 1$. 
	The irreducible curve for the fixed $b_0\in \mbb{C}$ given by $ac=b_0^2-1$ has infinitely many points $(a,c)$ where $a$ or $c$ is an odd integer.
	Hence $\mattwo {a_0}{b_0}{b_0}{c_0}\in \ol{T}$. This proves that $\ol{T}=S$. Now consider the polynomial isomorphism 
	\equ{\psi:X(\mbb{C}) \lra S \text{ given by }\psi: (x,y,z) \lmt \mattwo {z+x}{y}{y}{z-x}.} 
	We observe that $\psi (X(\Z)) \sups T$. Hence $X(\Z)$ is zariski dense in $X(\mbb{C})$.
\end{proof}
\subsection{\bf{On Some Topology of the Classical Groups $GL_2(\mbb{C}),SL_2(\mbb{C})=SP_2(\mbb{C})$, $SO_{(2,1)}(\mbb{C})\cong SO_3(\mbb{C})$}}
We mention some topological properties of classical groups in this section. For which group, the surjectivity holds and the properties of zariski denseness and simply-connectedness can be compared.
\begin{theorem}[On Zariski Density and Simply-connectedness]
	~\\
	\begin{enumerate}
		\item $SL_2(\Z)=SP_2(\Z)$ is zariski dense in $SL_2(\mbb{C})=SP_2(\mbb{C})$ and $SL_2(\mbb{C})=SP_2(\mbb{C})$ is simply-connected. 
		\item $GL_2(\Z)$ is not zariski dense in $GL_2(\mbb{C})$ and $GL_2(\mbb{C})$ is not simply-connected. 
		\item $SO_{(2,1)}(\mbb{\Z})$ is zariski dense in $SO_{(2,1)}(\mbb{C})\cong SO_3(\mbb{C})$ and $SO_{(2,1)}(\mbb{C})\cong SO_3(\mbb{C})$ is not simply-connected.
		\item $SO_3(\Z)$ is not zariski dense in $SO_3(\mbb{C})$ and $SO_3(\mbb{C})$ is not simply-connected.
		\item For $n\in \N$, the unipotent upper triangular group $UU_n(\Z)$ is zariski dense in $UU_n(\mbb{C})$ and the group $UU_n(\mbb{C})$ is simply connected. 
	\end{enumerate}
\end{theorem}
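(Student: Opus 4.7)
The plan is to handle the five items independently, splitting Zariski density from simple connectedness in each. For density I rely on Lemma~\ref{lemma:ZD} (an infinite subset of an irreducible curve is Zariski dense) combined with the fact that the Zariski closure of a subgroup of an algebraic group is itself an algebraic subgroup. For simple connectedness I use the Cartan/polar decomposition to reduce each complex linear group to its maximal compact subgroup and read off $\pi_1$ from there.

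For item (1), the unipotent subgroups $U^+ = \{\mattwo{1}{t}{0}{1} : t\in \mbb{C}\}$ and $U^- = \{\mattwo{1}{0}{t}{1} : t\in \mbb{C}\}$ are irreducible one-dimensional subvarieties of $SL_2(\mbb{C})$, each containing an infinite integer subset; the Zariski closure $H$ of $SL_2(\Z)$ is a closed subgroup containing both, and since $U^{\pm}$ generate $SL_2$ as an algebraic group, $H = SL_2(\mbb{C})$. The polar decomposition $SL_2(\mbb{C}) \simeq SU(2) \times \mbb{R}^3$ together with $SU(2) \cong S^3$ gives simple connectedness. For item (2) the integer group sits inside the proper closed subvariety $\{A : \det(A)^2 = 1\}$ and so is not Zariski dense; the fibration $\det : GL_2(\mbb{C}) \to \mbb{C}^*$ with simply-connected fiber $SL_2(\mbb{C})$ then yields $\pi_1(GL_2(\mbb{C})) = \pi_1(\mbb{C}^*) = \Z$ via the homotopy long exact sequence.

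Items (3) and (4) share the underlying complex structure: the complex symmetric bilinear forms of signatures $(2,1)$ and $(3,0)$ are equivalent over $\mbb{C}$ after diagonalization, which yields $SO_{(2,1)}(\mbb{C}) \cong SO_3(\mbb{C})$; and the spin isogeny $SL_2(\mbb{C}) \to SO_3(\mbb{C})$ given by the conjugation action on traceless $2\times 2$ matrices is a $2$-to-$1$ topological cover, so $\pi_1(SO_3(\mbb{C})) = \Z/2\Z$. For the density in (3), I will exhibit the image of $SL_2(\Z)$ (or a finite-index subgroup) under a version of the spin isogeny defined over $\Z$ as a subgroup of $SO_{(2,1)}(\Z)$, and inherit density from item (1) together with surjectivity of the isogeny on $\mbb{C}$-points. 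By contrast $SO_3(\Z)$ consists of the signed permutation matrices of determinant $+1$ (the rotation group of the cube, of order $24$), so it is finite and cannot be Zariski dense in the three-dimensional variety $SO_3(\mbb{C})$, yielding item (4).

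For item (5), the strictly-upper-triangular entries give an algebraic isomorphism $UU_n(\mbb{C}) \cong \mbb{C}^{\binom{n}{2}}$ carrying $UU_n(\Z)$ onto $\Z^{\binom{n}{2}}$, which is Zariski dense in $\mbb{C}^{\binom{n}{2}}$; the target is contractible and hence simply connected. The main obstacle in the whole theorem is the density half of item (3): making the spin isogeny descend correctly to $\Z$-points requires care in choosing an integral basis for the symmetric bilinear form. My fallback, if the explicit spin-isogeny calculation is awkward over $\Z$, is to invoke Borel density for the arithmetic lattice $SO_{(2,1)}(\Z) \subset SO_{(2,1)}(\mbb{R})$ and then use that the real form is already Zariski dense in its complexification.
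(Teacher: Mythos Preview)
Your proposal is correct and tracks the paper's proof closely in items (2)--(5), but there is one genuinely different idea worth flagging.

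For the Zariski density in item (1), the paper argues by hand: it restricts to the open chart $\{a\neq 0\}\cong \mathbb{C}^*\times\mathbb{C}^2$ via $(a,b,c)\mapsto\mattwo{a}{b}{c}{(1+bc)/a}$, and then repeatedly applies Lemma~\ref{lemma:ZD} to lines through coprime integer pairs to fill out the whole chart. Your argument is cleaner and more conceptual: the Zariski closure of a subgroup is a subgroup, it contains the two one-parameter unipotent subgroups $U^{\pm}$ (each an irreducible curve with infinitely many integer points), and $U^+,U^-$ generate $SL_2(\mathbb{C})$. This buys you a one-line proof and generalizes immediately to $SL_n$; the paper's approach is self-contained (it does not use the group structure of the closure) but is specific to $2\times 2$.

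For item (3), the paper does exactly what you intend: it writes down the spin isogeny $Q:SL_2(\mathbb{C})\to SO_{(2,1)}(\mathbb{C})$ explicitly as
\equ{\mattwo abcd \longmapsto \matthree{\frac{a^2-b^2-c^2+d^2}{2}}{ac-bd}{\frac{a^2-b^2+c^2-d^2}{2}}{ab-cd}{bc+ad}{ab+cd}{\frac{a^2+b^2-c^2-d^2}{2}}{ac+bd}{\frac{a^2+b^2+c^2+d^2}{2}},}
notes the denominators of $2$, and restricts to the Zariski-dense subset $P\subset SL_2(\mathbb{Z})$ on which the parities of $a,b,c,d$ force all entries to be integers. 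That is the ``care in choosing an integral basis'' you anticipated; your Borel-density fallback is valid but imports much heavier machinery (Borel--Harish-Chandra for the lattice property, then Borel density itself) than the paper's explicit two-line check. The remaining items (polar/Iwasawa decompositions for $\pi_1$, the $\det^2=1$ obstruction for $GL_2$, finiteness of $SO_3(\mathbb{Z})$, and $UU_n\cong\mathbb{A}^{\binom{n}{2}}$) are handled the same way in both.
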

\begin{proof}
	First we prove $(1)$. Let $G$ be the closure of $SL_2(\Z)$ in $\mbb{C}^4$. Then $G\subseteq SL_2(\mbb{C})$. Consider the subset 
	$S=\bigg\{\mattwo abcd \mid ad-bc=1,a\neq 0\bigg\}\subs SL_2(\mbb{C})$. The set $S$ is homeomorphic in the zariski subspace topology to the set $T=\mbb{C}^{*}\times \mbb{C}^2\subs \mbb{C}^3$
	in the zariski subspace topology. The isomorphism is given by \equ{S \lra T \text { is given by } \mattwo abc{d=\frac{1+bc}{a}} \lra (a,b,c).}
	Now using Lemma~\ref{lemma:ZD} repeatedly for lines starting from pairs of relatively prime integers, 
	we conclude that the set \equa{D&=\{(a,b,c)\in \Z^3\mid \text{ there exists } d\in \Z \text{ with }ad-bc=1,a\neq 0\}\\
		&\subs \Z\bs \{0\}\times \Z\times \Z}
	is zariski dense in $T$. We observe the following.
	\begin{itemize}
		\item First $\ol{D} \supseteq \{(a,b,y)\in \Z^2\times \mbb{C}\mid gcd(a,b)=1\}$ and
		\item $\ol{D} \supseteq \{(a,x,c)\in \Z \times \mbb{C} \times \Z\mid gcd(a,c)=1\}$.
		\item Then $\ol{D} \supseteq \{(a,x,y)\in \Z\times \mbb{C}^2, a\neq 0\}$.
		\item Hence $\ol{D}\supseteq \mbb{C}^{*}\times \mbb{C}^2$.
	\end{itemize}
	
	So $G\sups S$. Similarly $G \sups \bigg\{\mattwo abcd \mid ad-bc=1,b\neq 0\bigg\}$. So $G=SL_2(\mbb{C})$. 
	
	Now we have a homeomorphism $SL_2(\mbb{C}) \lra SU_2(\mbb{C}) \times \mbb{R}^{+} \times \mbb{C}$ given by 
	\equ{\mattwo abcd \lmt \bigg(\mattwo {\frac a{\sqrt{\mid a \mid^2+\mid c \mid^2}}}{\frac {-\ol{c}}{\sqrt{\mid a \mid^2+\mid c \mid^2}}}{\frac c{\sqrt{\mid a \mid^2+\mid c \mid^2}}}
		{\frac {\ol{a}}{\sqrt{\mid a \mid^2+\mid c \mid^2}}},(\sqrt{\mid a \mid^2+\mid c \mid^2}),z\bigg)}
	where $z=\frac{b(\mid a \mid^2+\mid c \mid^2)+\ol{c}}{a}$ if $a\neq 0$ or $z=\frac{d(\mid a \mid^2+\mid c \mid^2)-\ol{a}}{c}$ if $c\neq 0$
	and we also have 
	\equ{\mattwo abcd =\mattwo {\frac a{\sqrt{\mid a \mid^2+\mid c \mid^2}}}{\frac {-\ol{c}}{\sqrt{\mid a \mid^2+\mid c \mid^2}}}{\frac c{\sqrt{\mid a \mid^2+\mid c \mid^2}}}
		{\frac {\ol{a}}{\sqrt{\mid a \mid^2+\mid c \mid^2}}}\mattwo {\sqrt{\mid a \mid^2+\mid c \mid^2}}{\frac z{\sqrt{\mid a \mid^2+\mid c \mid^2}}}{0}{\frac 1{\sqrt{\mid a \mid^2+\mid c \mid^2}}}.}
	So the fundamental group of $SL_2(\mbb{C})$ is trivial since $SU_2(\mbb{C})$ is homeomorphic to $S^3$.
	
	Now we prove $(2)$. We have $GL_2(\Z)=SL_2(\Z)\sqcup \mattwo 100{-1}SL_2(\Z)$. Hence its closure using $(1)$ is 
	\equ{\bigg(SL_2(\mbb{C})\bigsqcup \mattwo 100{-1}SL_2(\mbb{C})\bigg)=\{A\in GL_2(\mbb{C})\mid \Det(A)=\pm 1\}.}
	So $GL_2(\Z)$ is not zariski dense in $GL_2(\mbb{C})$. Also $U_2(\mbb{C})$ is a strong deformation retract of $GL_2(\mbb{C})$ by the Gram-Schmidt orthonormalization process.  
	The group $SU_2(\mbb{C})\times \mbb{R}$ is a universal covering map (with respect to usual topology) of $U_2(\mbb{C})$ via the map $(A,t) \lra e^{\gp it}A$ with a discrete non-trivial kernel isomorphic to $\Z$.
	So $GL_2(\mbb{C})$ is not simply-connected.
	
	Now we prove $(3)$. Consider the polynomial map $Q:SL_2(\mbb{C}) \lra SO_{(2,1)}(\mbb{C})$ given by 
	\equ{\mattwo abcd \lmt \matthree{\frac{a^2-b^2-c^2+d^2}{2}}{ac-bd}{\frac{a^2-b^2+c^2-d^2}{2}}{ab-cd}{bc+ad}{ab+cd}{\frac{a^2+b^2-c^2-d^2}{2}}{ac+bd}{\frac{a^2+b^2+c^2+d^2}{2}}.}
	This map is an universal covering (with respect to usual topology) with kernel $\bigg\{\pm \mattwo 1001\bigg\}$. So $SO_{(2,1)}(\mbb{C})$ is not simply-connected. 
	Consider the set \equ{P=\bigg\{\mattwo abcd\in SL_2(\Z)\mid \text{three of }a,b,c,d \text{ are not odd integers}\bigg\}.}
	For example the matrix $\mattwo 2153\nin P$. Then $P$ is also zariski dense in $SL_2(\mbb{C})$. The image $Q(P)$ of $P$ is contained in $SO_{(2,1)}(\Z)$ and is zariski dense in $SO_{(2,1)}(\mbb{C})$ since we have 
	$Q(\ol{P})\subs \ol{Q(P)}$ in zariski topology. So $SO_{(2,1)}(\Z)$ is zariski dense in $SO_{(2,1)}(\mbb{C})$.
	
	We exhibit an isomorphism between $SO_{(2,1)}(\mbb{C})$ and $SO_3(\mbb{C})$ given by
	\equ{\matthree rstuvwxyz \lra \matthree rs{-it}uv{-iw}{ix}{iy}{z}.}
	Now we prove $(4)$. $SO_3(\mbb{C})$ is not simply-connected as we have seen that $SO_{(2,1)}(\mbb{C})$ is not so. Also $SO_3(\Z)$ is a finite group and hence it is already zariski closed.
	
	The assertion $(5)$ is also clear.
	This completes the proof of the theorem.  
\end{proof}
\subsection{\bf{Equation $x^2+y^2-z^2=1$: Simply-connectedness}}

Here in this section we prove simply-connectedness of $X(\mbb{C})=\{(x,y,z)\in \mbb{C}^3\mid x^2+y^2-z^2=1\}$.
\begin{lemma}
	Let $X(\mbb{C})=\{(x,y,z)\in \mbb{C}^3\mid x^2+y^2-z^2=1\}$. Then $X(\mbb{C})$ is simply-connected.
\end{lemma}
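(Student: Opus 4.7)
The plan is to identify $X(\mbb{C})$ diffeomorphically with the tangent bundle $TS^2$ and then invoke that this bundle deformation retracts onto its simply connected base $S^2$.

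First I would perform the linear change of variables $w = iz$, which gives an algebraic isomorphism from $X(\mbb{C})$ onto $Y(\mbb{C}) := \{(x,y,w) \in \mbb{C}^3 \mid x^2+y^2+w^2 = 1\}$. This reduces the problem to proving that $Y(\mbb{C})$ is simply connected, and it is technically convenient because the resulting quadratic form is positive definite over $\mbb{R}$.

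Next I would split each complex coordinate into real and imaginary parts: $x = x_1+ix_2$, $y = y_1+iy_2$, $w = w_1+iw_2$, and set $u = (x_1, y_1, w_1)$, $v = (x_2, y_2, w_2) \in \mbb{R}^3$. Separating the real and imaginary parts of $x^2+y^2+w^2 = 1$ yields the two real equations $|u|^2 - |v|^2 = 1$ and $u \cdot v = 0$. In particular $|u|^2 = 1 + |v|^2 \geq 1$, so $u$ never vanishes, and the assignment $\Phi(u,v) = (u/|u|,\, v)$ is a well-defined smooth map $Y(\mbb{C}) \lra TS^2$; the orthogonality $u \cdot v = 0$ guarantees $v \in T_{u/|u|}S^2$. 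One checks that $\Phi$ is a diffeomorphism, with explicit smooth inverse $(e, v) \lmt (\sqrt{1+|v|^2}\,e,\, v)$.

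Finally, since $TS^2$ is a real vector bundle over $S^2$, the zero section $S^2 \hookrightarrow TS^2$ is a deformation retract, realized by the straight-line homotopy $H_t(e, v) = (e, (1-t)v)$. Therefore $\pi_1(X(\mbb{C})) \cong \pi_1(TS^2) \cong \pi_1(S^2) = 0$, proving the lemma. The argument presents no real obstacle; the only point worth double-checking is that the inverse of $\Phi$ is genuinely smooth on all of $TS^2$ (which is immediate since $1+|v|^2 > 0$ everywhere) and that the restriction of $\Phi$ to the real locus $\{v = 0\}$ recovers the standard embedding of $S^2$ in $Y(\mbb{R})$.
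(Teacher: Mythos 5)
Your argument is correct, and it takes a genuinely different route from the paper's. The paper first substitutes $w=iz$ to pass to the affine complex sphere $S^2_{\mbb{C}}$ (same first step as yours), but then realizes it as the homogeneous space $SL_2(\mbb{C})/SO_2(\mbb{C})$ via the transitive $SO_3(\mbb{C})$-action together with the double cover $SL_2(\mbb{C})\lra SO_3(\mbb{C})$, and reads off $\gp_1=0$ from the long exact homotopy sequence of the fiber bundle $SO_2(\mbb{C}) \hookrightarrow SL_2(\mbb{C}) \lra X(\mbb{C})$, using that $SL_2(\mbb{C})$ is simply connected and $SO_2(\mbb{C})\cong\mbb{C}^*$ is connected. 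You instead decompose into real and imaginary parts to obtain the constraints $|u|^2-|v|^2=1$ and $u\cdot v=0$, and exhibit the explicit diffeomorphism $\Phi(u,v)=(u/|u|,\,v)$ with inverse $(e,v)\lmt(\sqrt{1+|v|^2}\,e,\,v)$ onto the total space $TS^2 = \{(e,v)\in S^2\times\mbb{R}^3 : e\cdot v=0\}$, then retract to the zero section. Both are sound: your argument is more elementary and self-contained (no fibration long exact sequence, no covering of $SO_3(\mbb{C})$, no input that $SL_2(\mbb{C})$ is simply connected), and it identifies the diffeomorphism type of $X(\mbb{C})$ rather than just its fundamental group; the paper's choice aligns with the surrounding appendix, which is organized around homogeneous spaces and covers of the classical groups it is studying. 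One minor nicety you implicitly use and could state is that $Y(\mbb{C})$ is a smooth real $4$-manifold (the complex gradient $(2x,2y,2w)$ never vanishes on the variety), so ``diffeomorphism'' is the right word; this is immediate but worth a clause.
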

\begin{proof}
	First we note that $X(\mbb{C})$ is homeomorphic to the two dimensional complex sphere $S^2_{\mbb{C}}=\{(x,y,z)\in \mbb{C}^3\mid x^2+y^2+z^2=1\}$. So we consider $S^2_{\mbb{C}}$ instead of $X(\mbb{C})$.
	We observe that $SO_3(\mbb{C})$ acts transitively on $S^2_{\mbb{C}}$ and the stabilizer of $e_3^3=(0,0,1)$ is $SO_2(\mbb{C})$ isomorphic to 
	\equ{\bigg\{\matthree {a}{-b}{0}{b}{a}{0}{0}{0}{1}\in M_3(\mbb{C})\mid a^2+b^2=1\bigg\}\subs SO_3(\mbb{C}).}
	So \equ{X(\mbb{C})\cong S^2_{\mbb{C}}\cong \frac{SO_3(\mbb{C})}{SO_2(\mbb{C})}.}
	We also observe that $SO_2(\mbb{C})=\{\mattwo a{-b}ba\in M_2(\mbb{C})\mid a^2+b^2=1\}\subs SL_2(\mbb{C})$.
	Now we proceed one step further which is useful and prove that in fact  
	\equ{X(\mbb{C})\cong S^2_{\mbb{C}}\cong \frac{SL_2(\mbb{C})}{SO_2(\mbb{C})}.}
	This is because the double covering map $SL_2(\mbb{C})\lra SO_3(\mbb{C})$ given by 
	\equ{\mattwo abcd \lmt \matthree{\frac{a^2-b^2-c^2+d^2}{2}}{ac-bd}{i(\frac{a^2-b^2+c^2-d^2}{2})}{ab-cd}{bc+ad}{i(ab+cd)}{-i(\frac{a^2+b^2-c^2-d^2}{2})}{-i(ac+bd)}{\frac{a^2+b^2+c^2+d^2}{2}}}
	maps $SO_2(\mbb{C})\subs SL_2(\mbb{C})$ onto $SO_2(\mbb{C})\subs SO_3(\mbb{C})$  given by 
	\equ{\mattwo a{-b}ba \lmt \matthree {a^2-b^2}{-2ab}0{2ab}{a^2-b^2}0{0}{0}{1}}
	which is $2:1$ map of $SO_2(\mbb{C})$ onto itself. 
	The group $SL_2(\mbb{C})$ is simply connected. The group $SO_2(\mbb{C})\cong V(xy-1)\cong \mbb{C}^{*}$ and hence the fundamental group is $\Z$.
	
	With this fiber bundle $SO_2(\mbb{C}) \hookrightarrow SL_2(\mbb{C}) \lra X(\mbb{C})$ we have the long exact sequence of homotopy groups given by 
	\equ{\lra \gp_1(SO_2(\mbb{C})) \lra \gp_1(SL_2(\mbb{C})) \lra \gp_1(X(\mbb{C}))\lra \gp_0(SO_2(\mbb{C})) \lra }
	Since $SO_2(\mbb{C})$ is path connected we have $\gp_1(X(\mbb{C}))=0$ and hence $X(\mbb{C})$ is simply connected.
\end{proof}
\begin{note}
	If $F \hookrightarrow E \os{p}{\lra} B$ is a fiber bundle with path connected fiber $F$ then any loop in $B$ based at $b\in B$ can be lifted to a path whose end points can be joined in the same fiber $p^{-1}(b)\cong F$ and be made a loop. Hence if $E$ is simply-connected and $F$ is path connected then $B$ is simply-connected.
\end{note}

\bibliographystyle{abbrv}
\def\cprime{$'$}

\end{document}